\documentclass[12pt]{article}

\usepackage[dvipsnames]{xcolor}
\usepackage{enumerate,enumitem}

\usepackage{amsmath,amssymb,amsfonts,mathrsfs, amsthm,mathtools, bm, bbm, dsfont, mathrsfs, amsthm}
\usepackage{graphicx, epstopdf}

\usepackage{fullpage}

\usepackage{ebgaramond}
\usepackage[OT1]{fontenc}
\usepackage[utf8]{inputenc}

\emergencystretch 3em

\usepackage[colorlinks=true,breaklinks=true,bookmarks=true,urlcolor=MidnightBlue,citecolor=MidnightBlue,linkcolor=MidnightBlue,bookmarksopen=false,draft=false]{hyperref}

\usepackage{tikz}
\usepackage{amsfonts}
\usepackage{scrextend}
\usepackage{amsfonts}
\usepackage{amsmath}
\usepackage{amssymb}
\usepackage{enumitem}
\usepackage{mathtools}
\usepackage{amsthm}
\usepackage{breqn}
\usepackage{dsfont}
\usepackage{amsmath}
\usepackage{amsfonts}
\usepackage{amssymb}
\usepackage{xcolor}


\usetikzlibrary{fit}
\hypersetup{
colorlinks=true,       
linkcolor=blue,          
citecolor=blue,        
filecolor=blue,      
urlcolor=blue 
}

\newcommand{\indep}{\perp \!\!\! \perp}
\newcommand{\LWS}{L_{{{\sf WS}}}}
\newcommand{\LRM}{L_{{{\sf RM}}}}

\newcommand{\LCON}{L_{{{\sf CO}}}^N}
\newcommand{\LEXN}{L_{{{\sf EX}}}^N}
\newcommand{\LCOSN}{L_{{{\sf CO, SYM}}}^N}
\newcommand{\LCO}{L_{{{\sf CO}}}}
\newcommand{\LEX}{L_{{{\sf EX}}}}
\newcommand{\LCOS}{L_{{{\sf CO, SYM}}}}
\newcommand{\PN}{$\mathcal{P}_{N}$}
\newcommand{\PIN}{$\mathcal{P}_{\infty}$}

\newcommand{\LPRS}{L_{{{\sf PR, SYM}}}}

\newcommand{\N}{\mathcal{N}}


\newcommand{\sina}[1]{{\color{blue} #1}}

\allowdisplaybreaks


\newtheorem{theorem}{Theorem}[]
\theoremstyle{lemma}
\newtheorem{lemma}{Lemma}[]
\theoremstyle{proposition}
\newtheorem{proposition}{Proposition}[]
\theoremstyle{definition}
\newtheorem{definition}{Definition}[]

\theoremstyle{assumption}
\newtheorem{assumption}{Assumption}[]

\theoremstyle{remark}
\newtheorem*{remark}{Remark}
\begin{document}

\title{Decentralized Exchangeable Stochastic Dynamic Teams in Continuous-time, their Mean-Field Limits and Optimality of Symmetric Policies}

\author{Sina Sanjari \quad Naci Saldi \quad Serdar Y\"uksel\thanks{Sina Sanjari is with the department of Mathematics and Computer Science at the Royal Military College of Canada. Naci Saldi is with the department of Mathematics at Bilkent University, Turkey. Serdar Y\"uksel is with the department of Mathematics and Statistics at Queen's University, Canada. Emails: \{sanjari@rmc.ca, naci.saldi@bilkent.edu.tr, yuksel@queensu.ca\}.}}

\maketitle

\begin{abstract}
We study a class of stochastic exchangeable teams comprising a finite number of decision makers (DMs) as well as their mean-field limits involving infinite numbers of DMs. In the finite population regime, we study exchangeable teams under the centralized information structure. For the infinite population setting, we study exchangeable teams under the decentralized mean-field information sharing. The paper makes the following main contributions: i) For finite population exchangeable teams, we establish the existence of a randomized optimal policy that is exchangeable (permutation invariant) and Markovian; ii) As our main result in the paper, we show that a sequence of exchangeable optimal policies for finite population settings converges to a conditionally symmetric (identical), independent, and decentralized randomized policy for the infinite population problem, which is globally optimal for the infinite population problem. This result establishes the existence of a symmetric, independent, decentralized optimal randomized policy for the infinite population problem. Additionally, this proves the optimality of the limiting measure-valued MDP for the representative DM; iii) Finally, we show that symmetric, independent, decentralized optimal randomized policies are approximately optimal for the corresponding finite-population team with a large number of DMs under the centralized information structure. Our paper thus establishes the relation between the controlled McKean-Vlasov dynamics and the optimal infinite population decentralized stochastic control problem (without an apriori restriction of symmetry in policies of individual agents), for the first time, to our knowledge. 
\end{abstract}

\section{Introduction}

Decentralized stochastic control or stochastic teams study a collection of decision makers (DMs) (or agents or controllers) acting together to optimize a common cost function but not necessarily having access to the same information. Each DM over time has partial access to the global information which is determined by the \emph{information structure} of the team \cite{wit75}. This is in contrast with stochastic games, where the DMs may have conflicting cost criteria or probabilistic models or priors on the system. Stochastic teams \cite{ho1980team, CDCTutorial} generalize classical single DM stochastic control problems with applications in many fields such as networked control \cite{ho1980team, hespanha2007survey, YukselBasarBook}, communication networks \cite{hespanha2007survey}, cooperative systems \cite{mar55, Radner}, large sensor networks \cite{tsitsiklis1988decentralized}, and electricity markets and smart grid design \cite{davison1973optimal}. 

In the context of stochastic games, an important set of results involves those on mean-field (MF) games: MF games are limiting models of weakly-interacting symmetric finite DM games (see e.g., \cite{CainesMeanField2,LyonsMeanField,carmona2018probabilistic}). The existence of a symmetric Nash equilibrium for MF games has been established in several papers; see e.g., \cite{LyonsMeanField,bardi2019non, carmona2016mean,lacker2015mean,saldi2018markov}. In one direction, the mean-field approach designs policies such that Nash equilibria for MF games are shown to be approximately Nash equilibria for the corresponding (pre-limit) games with large numbers of DMs; see e.g., \cite{CainesMeanField3, saldi2018markov, carmona2018probabilistic, cecchin2017probabilistic}. In the opposite direction, the limits of symmetric Nash equilibrium policies of finite DM games are shown to converge to a Nash equilibrium of the corresponding MF game as the number of DM goes to infinity; see e.g., \cite{fischer2017connection, lacker2018convergence, LyonsMeanField}. 


In contrast to the setting of MF games, we study continuous-time stochastic MF teams under local information. Our main goal is to establish the existence and structural results (such as symmetry) for a globally optimal solution for continuous-time exchangeable stochastic with a large but finite number of DMs as well as their infinite population MF teams. Such existence and structural results are important for developing computational, approximation, and learning methods (see e.g., \cite{carmona2023model,yongacoglu2022independent,pham2016discrete, Ali-2023, SSYMFsharing}). Our approach differs from the methods used in MF games. The distinction arises primarily because fixing policies of certain DMs to symmetric policies and utilizing fixed-point theorems only results in the existence of a symmetric Nash equilibrium or person-by-person optimal solution, not a globally optimal solution. Unlike games, where Nash equilibrium is the primary focus, in teams a globally optimal solution is the main objective. This is because Nash equilibrium or person-by-person optimal solutions, in the context of teams, often correspond to only locally optimal solutions. Hence, the existence and structural results for games may be inconclusive regarding global optimality for teams without uniqueness (e.g., see \cite{LyonsMeanField, hajek2019non, bayraktar2020non,bardi2019non,delarue2020selection} for non-uniqueness of Nash equilibrium). This gap between global optimality and person-by-person optimality is especially significant for stochastic teams with countably infinite number of DMs (or MF teams), as the deviation of a single DM typically has no impact on the team's overall performance. Consequently, one can anticipate that large weakly-interacting teams may have multiple local person-by-person optimal solutions. 

Additionally, the absence of centralization in the information structure for teams adds an extra layer of mathematical challenge for establishing the results for globally optimal solutions. In particular, the methods from the classical stochastic control theory do not directly apply to the decentralized setting (see e.g., Wistsenhausen's counterexample \cite{WitsenhausenCounter} for an intriguing counterexample). On the other hand, the results on convex discrete-time teams with a decentralized information structure in \cite{Radner, KraMar82, HoChu} apply only to static teams with finitely many DMs. These challenges carry over to the continuous-time setup as well; in \cite{pradhan2023controlled}, for continuous-time teams with a finite number of DMs both in centralized and decentralized settings, the existence and approximation of optimal solutions by discrete-time approximations have been established. 

A notable set of results related to MF teams are those on social optima control problems, MF optimal control problems, and their limiting problems of McKean-Vlasov optimal control (see e.g., \cite{motte2022mean,carmona2023model,cecchin2021finite,carmonaForwardbackward,pham2017dynamic,bensoussan2015master,bayraktar2018randomized,djete2022mckean,lacker2017limit,motte2023quantitative,fornasier2019mean,albi2017mean,albi2022moment,lauriere2016dynamic,achdou2020mean,Mahajan-CDC14}), with particular emphasis on the linear quadratic Gaussian model (see e.g., \cite{huang2012social,carmona2018probabilistic, caines2018peter, pham2016discrete,elliott2013discrete,ni2015discrete,toumi2024mean,arabneydi2015team}).  Within this framework, characterizing the bound for the discrepancy between centralized social performance and decentralized controllers has been a central focus under open-loop, relaxed, and Markovian feedback policies. Results on the McKean-Vlasov optimal control type or mean-field type problem assume symmetry in policies among DMs to make a connection between the finite population centralized team problem, the infinite population problem, and the optimal control problem of the representative agent. For centralized discrete-time MF teams with a finite population of agents, an equivalent Markov decision problem (MDP) formulation is characterized in \cite{bauerle2023mean}. Additionally, it has been shown that the value functions for the finite population MDP converge to the value function of a limiting MDP as the number of DMs goes to infinity. building on the MDP formulation in \cite{bauerle2023mean}, the existence of a symmetric optimal solution with MF information sharing information structure is established in \cite{SSYMFsharing} for discrete-time MF teams. Related to \cite{bauerle2023mean}, in \cite{carmona2023model} dynamic programming has been established for the discrete-time McKean-Vlasov problem with a representative agent using a lifted measure-valued MDP. It is assumed apriori that policies are symmetric in the infinite population limit (see \cite[Section 2]{carmona2023model}). Additionally in \cite{motte2022mean}, the connection between the finite population problem and the limiting MDP for the discrete-time McKean-Vlasov MDP has been established under the assumption that the policies are open-loop, symmetric, and decentralized. In this paper, we consider decentralized information structure for finite population teams and their infinite population MF teams. Furthermore, in contrast to the results in the literature, we do not assume symmetry in policies apriori. Instead, we rigorously show that there exists a symmetric optimal solution for the infinite population MF problem, certifying the optimality of a commonly assumed McKean-Vlasov formulation.

Discrete-time MF teams under decentralized information structure have been studied in \cite{sanjari2018optimal, sanjari2019optimal, SSYdefinetti2020}. For such models, the existence and convergence of a globally optimal solution have been established in  \cite{sanjari2018optimal, sanjari2019optimal, SSYdefinetti2020}. Additionally, a class of discrete-time exchangeable games among teams with infinite DMs has been studied in \cite{sanjari2024nash}. 
 Our results extend the aforementioned results for discrete-time teams to the continuous-time ones. Continuous time setting entails additional analysis and technical arguments compared to discrete-time prior works \cite{sanjari2018optimal, sanjari2019optimal, SSYdefinetti2020}.
 
 In a recent paper \cite{jackson2023approximately}, non-asymptotic bounds have been obtained between the optimal performance under centralized and strictly decentralized information structures for a class of continuous-time convex teams where optimal policies can be characterized by stochastic maximum principle and a Hamilton-Jacobi-type equation.  
 Our approach is complementary in that we do not approach the problem from a centralized agent's perspective (and thus cannot consider the maximum principle or the HJB optimality since we do not have a natural {\it state} to carry out such optimality equations), and throughout consider decentralized optimal policies for both finite and infinitely many DM setups. We establish optimality properties under strict decentralized information.  

We classify our analysis into problems for which the dynamics of agents are decoupled or coupled. For the former case, we also consider the setups where the problem is convex or only exchangeable; for the latter convexity is not assumed since convexity is typically incompatible with decentralized information when there is coupling through dynamics, since the agents may have an incentive to signal through their actions \cite{YukselBasarBook}. The decentralized information structure under the decoupled dynamics in \eqref{eq:st-dy-N} is a specific example of a class of \emph{partially-nested decentralized information structures} (see \cite{HoChu,YukselBasarBook}), where convexity properties are less restrictive \cite{saldi2022geometry,YukselSaldiSICON17} (as there is no signaling via actions between agents). We note that this is not the case for the decentralized information structure with the coupled dynamics in \eqref{eq:st-dy-C-intro}, which belongs to the class of \emph{non-classical decentralized information structures} for which convexity in policies typically never occurs; even if the running cost function is convex, the presence of the non-classical information structure results in non-convex expected cost in policies (as demonstrated in \cite{YukselSaldiSICON17} and the celebrated counter-example by Witsenhausen \cite{WitsenhausenCounter}). 

In our paper, we will consider both the convex and the non-convex setup. The models we study will consist of the following two types: For the decoupled setup, we consider finite horizon models of the form
\begin{align}\label{eq:st-dy-NDecoupled}
    dX_{t}^i = b_{t}(X_{t}^i,U_{t}^i)dt + \sigma_t(X_{t}^{i})dW_{t}^{i}, \quad t\in \mathbb{T}:=[0,T], \quad i \in {\cal N}
\end{align}
where ${\cal N}$ is either finite or countably infinite. 
For the coupled setup, we will consider systems of the form
\begin{align}\label{eq:st-dy-C-intro}
    dX_{t}^i = b_{t}\left(X_{t}^i,U_{t}^i, \frac{1}{N}\sum_{p=1}^{N} \delta_{X^{p}_t},\frac{1}{N}\sum_{p=1}^{N} \delta_{U^{p}_t}\right)dt + \sigma_t(X_{t}^{i})dW_{t}^{i}, \quad t\in \mathbb{T},
\end{align}
for all $i\in \mathcal{N}$, where $\delta_{\{\cdot\}}$ is the Dirac delta measure. Our analysis below will start with the first model above, and we will start studying the second model with coupled dynamics in Section \ref{sec:coup}. 

The cost criteria will be presented in detail in the corresponding sections. However, to give a high level flavor, the cost will be of the following form for the finite population problems
\begin{align}\label{eq:sec1-2-intro}
    \int_{0}^{T} \frac{1}{N}\sum_{i=1}^{N}\hat{c}\left(X^{i}_t,U^{i}_t,\frac{1}{N}\sum_{p=1}^{N} \delta_{X^{p}_t},\frac{1}{N}\sum_{p=1}^{N} \delta_{U^{p}_t} \right) dt
\end{align}
for some Borel measurable function $\hat{c}$. We consider the expectation of the above cost for the finite population team, and the limsup of its expectation as $N\to \infty$ for the infinite population regime. The policies considered will be decentralized, with appropriate topologies defined in their corresponding spaces in the following sections.

We emphasize that the information structure is decentralized; that is, each DM only uses locally measurable policies and their relaxed generalizations, unlike much of the prior literature on such problems. The paper makes the following main contributions: 

\begin{enumerate}
\item Under a decentralized information structure, for convex exchangeable teams with a finite number of DMs under the model \eqref{eq:st-dy-NDecoupled}, in Theorem \ref{the:1}--Theorem \ref{the:3}, we establish the existence of a globally optimal solution, and show that it is Markovian and symmetric (identical among DMs). For convex exchangeable teams, in  Theorem \ref{the:4}, we prove that a sequence of globally optimal solutions for teams with $N$ DMs converge (in a suitable sense) to a randomized optimal solution of the mean-field limit as $N$ goes to infinity. This establishes the existence of a (possibly randomized) globally optimal solution for infinite population mean-field teams and shows that it is Markovian and symmetric.
\item For non-convex exchangeable teams with a finite number of DMs, in Theorem \ref{the:5} for decoupled dynamics \eqref{eq:st-dy-NDecoupled} and in Theorem \ref{the:5-coup} for coupled dynamics \eqref{eq:st-dy-C-intro}, we establish the existence of a randomized globally optimal solution and show that it is exchangeable (the joint distribution is permutation invariant) which might not be necessarily symmetric. 
\item For the infinite population non-convex teams, in Theorem \ref{the:6} for decoupled dynamics and in Theorem \ref{the:6-coup} for coupled dynamics, we establish the existence of a globally optimal solution and show that it is privately randomized and symmetric.
\item In Theorem \ref{the:7} for non-convex teams with decoupled dynamics and Theorem \ref{the:7-coup} for non-convex teams with coupled dynamics, we establish that a symmetric globally optimal solution for the infinite population mean-field team is approximately optimal for the corresponding finite-population exchangeable team with a large number of DMs.
\end{enumerate}

The main contributions of the paper are summarized in the following figure.

 \begin{center}
    \begin{tikzpicture}[
     very thin]
    \fill[black!15] (0.2,0) ellipse (2.3 and 0.8);
    \fill[black!15] (0,-3) ellipse (2 and 0.7);
    \draw[gray, thick] (3.5,1.7) -- (3.5,-5.5);
     \draw[gray] (-2.5,-2.2) -- (10,-2.2);
    \draw[gray] (-2.5,1) -- (10,1);
    \node[text width=6cm] at (0.8,1.5) 
    {\bf Convex Teams\\ (with decoupled dynamics)};
    \node[text width=4cm] at (0.5,0) 
    {Exchangeable Teams with $N$ DMs};
    \node[text width=4cm] at (0.5,-3) 
    {Mean-field limit};
    \node[black,text width=6cm] at (0.5,-1.5) 
    {\small{ \sf Theorems \ref{the:1}--\ref{the:3}: An optimal solution exists that is symmetric and Markovian.}};
    \node[black,text width=6cm] at (0.5,-4.7) 
    {\small{ \sf Theorem \ref{the:4}: An optimal solution exists that is symmetric, Markovian, and independently randomized.}};
    \node[black,text width=0.1cm] at (-2.8,-2.8) 
    {\small{$N$ $\downarrow\infty$}};
    \draw[black, thick, ->] (-2.3,-1.9) -- (-2.3,-3.8);
\fill[black!15] (6.3,-3) ellipse (2 and 0.7);
    \fill[black!15] (6.5,0) ellipse (2.3 and 0.8);
        \draw[black, thick, ->] (4.2,-1.9) -- (4.2,-3.8);
\draw[black, thick, ->] (8.5,-3.8) -- (8.5,0);
\node[black,text width=0.1cm] at (3.7,-2.8) 
    {\small{$N$ $\downarrow\infty$}};
\node[black,text width=4.6cm] at (11.2,0.1) 
    {\small{ \sf Theorems \ref{the:7} and \ref{the:7-coup}:  MF optimal solution is approx. optimal for large $N$.}};
     \node[text width=7cm] at (7,1.5) 
    {\bf Non-Convex Teams\\ (with possibly coupled dynamics)};
    \node[text width=4cm] at (6.8,0) 
    {Exchangeable Teams with $N$ DMs};
    \node[text width=4cm] at (6.9,-3) 
    {Mean-field limit};
    \node[black,text width=8cm] at (7.5,-1.4) 
    {\small{ \sf Theorems \ref{the:5} and \ref{the:5-coup}: An optimal solution exists that is exchangeable (not necessarily symmetric) and randomized.}};
    \node[black,text width=8cm] at (7.5,-4.7) 
    {\small{ \sf Theorems \ref{the:6} and \ref{the:6-coup}: An optimal solution exists that is symmetric and independently randomized.}};
\end{tikzpicture}
\end{center}
We finally note that the paper can be viewed as a continuous-time counterpart of \cite{sanjari2018optimal, sanjari2019optimal, SSYdefinetti2020}; however, the setup in continuous-time offers additional technical challenges which are addressed in the paper. 

\section{Convex Decoupled Teams}

\subsection{Teams \PN\ and \PIN}
In this section, we introduce a class of continuous-time exchangeable teams with $N$-DMs as well as their MF limit with a countably infinite number of DMs.

\subsubsection{$N$-DM Continuous-Time Teams \PN.}

In our first  model, each DM has access to only a private state process that evolves as a controlled diffusion process given by the solution of the following stochastic differential equation (SDE)
\begin{align}\label{eq:st-dy-N}
    dX_{t}^i = b_{t}(X_{t}^i,U_{t}^i)dt + \sigma_t(X_{t}^{i})dW_{t}^{i}, \quad t\in \mathbb{T}:=[0,T],
\end{align}
 for some $T\in \mathbb{R}_{+}$ and all $i\in \mathcal{N}:=\{1, \ldots, N\}$, where $\mathcal{N}$ denotes the number of DMs. The process $(W_{t}^{i})_{t}$ denotes i.i.d. (among DMs) $d$-dimensional Wiener process for $i\in \mathcal{N}$, defined on a complete probability space $(\Omega, \mathcal{F}, \mathbb{P})$. Since our focus is on exchangeable teams, we let for every $t\in \mathbb{T}$, $\mathbb{X}=\mathbb{R}^{d}$ and $\mathbb{U}\subseteq \mathbb{R}^{m}$ be state and action spaces for some $d,m\in \mathbb{N}$, which are identical among DMs. In the above, $X_{0}^{1:N}$ are i.i.d. random variables with $\mathbb{E}[|X_{0}^{i}|^2]<\infty$ for all $i\in \mathcal{N}$, where $\mathbb{E}$ is the expectation with respect to $\mathbb{P}$. 
 
 Later on in Section \ref{sec:coup}, we consider a more general formulation by allowing the dynamics to be weakly coupled through the empirical measures of states and actions, i.e., each DM has access to only a private state process that evolves as a controlled diffusion process given by the solution of the SDE in \eqref{eq:st-dy-C-intro}. 
 
 In the following, we consider the first model with dynamics described by \eqref{eq:st-dy-N}. 
 
 \begin{definition}[{Admissible policies for each DM}]\label{Def:Adm}
     For every $i\in \mathcal{N}$, the action process $\{U_{t}^{i}\}_{t}$ is an adapted process to the natural filtration generated by $X_{[0,t]}^{i}$, that is  $U_{t}^{i}$ is induced by a measurable function that maps the history of private states $X_{[0,t]}^{i}$ to $U_t^i \in \mathbb{U}$.
 \end{definition}
  Denote the admissible policies for each DM$^i$ by $\pmb{\gamma^i}:=(\gamma^i_{t})_{t\in \mathbb{T}}$ which belongs to $ \Gamma^{i}$, and their product by $\pmb{\gamma^{1:N}}:=(\pmb{\gamma^{1}}, \ldots, \pmb{\gamma^{N}})$ belonging to  $\Gamma_{N}:=\prod_{i=1}^{N} \Gamma^{i}$. Later on, we allow randomization in policies which is required for our analysis. Throughout the entire paper, we use the bold letter notation to denote the path of the corresponding variable over time. The notation $a:b$ is used for $a, a+1, \ldots, b$ for any $b\geq a$. We emphasize that the information structure considered in Definition \ref{Def:Adm} for admissible policies is decentralized because each DM has only access to the private history of the states. In other words, the private states of DMs are not shared with others.

To guarantee the existence and uniqueness of a solution of \eqref{eq:st-dy-N} for every $i\in \mathcal{N}$, we impose the following assumptions on the drift $b_t:\mathbb{R}^d \times \mathbb{U} \to \mathbb{R}^d$ and the diffusion $\sigma_t:\mathbb{R}^d\to \mathbb{R}^{d\times d}$.
\begin{assumption}\label{assump:existence}
The following three conditions hold:
\begin{itemize}[wide]
    \item [(i)-Local Lipschitz continuity:] For every $t\in \mathbb{T}$, $b_t$ and $\sigma_{t}$ are locally Lipschitz continuous in $x$ (uniformly with respect to the other variables for $b_t$), i.e., for some constant $C_R > 0$ depending on $R > 0$
    \begin{align}
        |b_t(x,u)-b_t(y,u)|+\|\sigma_{t}(x)-\sigma_t(y)\| \leq C_R|x-y|,
    \end{align}
    for all $u\in \mathbb{U}$ and $x,y\in \mathcal{B}_R$, where $\mathcal{B}_R$ denotes the open ball of radius $R$ in $\mathbb{R}^d$
 centered at the origin. In the above, $\|\sigma_{t}(x)\|:=\sqrt{{\sf Tr}[\sigma^T_t(x)\sigma_t(x)]}$. Also, let $b_t(\cdot,\cdot)$ be
jointly continuous.
\item [(ii)-Affine growth condition:] For every $t\in \mathbb{T}$, $b_t$ and $\sigma_t$ satisfy a global growth condition of the form
\begin{align*}
|b_{t}(x,u)|^2 + \|\sigma_{t}(x)\|^2 \leq C_{0}(1+|x|^2)\qquad \forall (x,u)\in \mathbb{R}^d \times \mathbb{U}
\end{align*}
for some constant $C_{0}>0$.
\item [(iii)-Nondegeneracy:] The least eigenvalue of $\sigma_t\sigma_t^T$ is bounded away
from $0$ on every open ball $\mathcal{B}_{R}$ for every $t\in \mathbb{T}$, i.e., for every $t\in \mathbb{T}$ and $R > 0$, 
\begin{align}
    \sum_{i,j=1}^{d}a^{i,j}_t(x)l_{i}l_{j}\geq C_{R}^{-1}|l|^2\qquad \forall x\in \mathcal{B}_{R},
\end{align}
and for all $l:=(l_1, \ldots, l_j)^T\in \mathbb{R}^d$, where $a_t=\frac{1}{2}\sigma_t\sigma_{t}^T$.
\end{itemize}
\end{assumption}

 {Following \cite[Theorem 2.2.4]{arapostathis2012ergodic}, under Assumption \ref{assump:existence}(i) and (ii),  \eqref{eq:st-dy-N} admits a unique strong solution under the more general {\em non-anticipative policies}; see \cite[p. 30]{arapostathis2012ergodic}.} Although  Assumption \ref{assump:existence}(iii) is not needed for \eqref{eq:st-dy-N} to admit a unique strong solution, it is required to establish continuity of path space in policies, which will be needed in Section \ref{sec:exi}. 
 
 The DMs collectively minimize the following expected cost function given their private information:
\begin{align}\label{eq:sec1-2}
    J_N(\pmb{\gamma^{1:N}}):= \mathbb{E}^{\pmb\gamma^{1:N}}\left[\int_{0}^{T} c\left(X_{t}^{1:N}, U_{t}^{1:N}\right) dt\right]
\end{align}
for a given policies $\pmb{\gamma^{1:N}}$,
where the common cost function $c:\prod_{i=1}^{N}(\mathbb{X}\times \mathbb{U}) \to \mathbb{R}_{+}$ is measurable. In \eqref{eq:sec1-2}, $\mathbb{E}^{\pmb\gamma^{1:N}}$ denotes the expectation with respect to $\mathbb{P}$ where actions are induced by policies $\pmb{\gamma^{1:N}}$ given the information available. 
The goal of DMs is to find a globally optimal solution which is introduced below. 

\begin{definition}[Global Optimality for \PN]\label{def:GOS}
An admissible policy $\pmb{\gamma^{1\star:N\star}}\in \Gamma_{N}$ is globally optimal if 
\begin{align}
    \inf_{\pmb{\gamma^{1:N}}\in \Gamma_{N}}J_N(\pmb{\gamma^{1:N}})=  J_N(\pmb{\gamma^{1\star:N\star}}).
\end{align}
\end{definition}

Our focus in this paper is on exchangeable teams, and hence, we require the following assumption on the cost function.

\begin{assumption}\label{assump1}
 $c$ is exchangeable, i.e., for any permutation $\tau$ of $\{1, \ldots, N\}$ ($\tau\in S_{N}$), 
    \begin{align}
        c\left(X^{1:N}, U^{1:N}\right)=c\left(X^{\tau(1):\tau(N)}, U^{\tau(1):\tau(N)}\right).
    \end{align}
\end{assumption}

A specific example of a cost function that satisfies  Assumption \ref{assump1} is the cost for the MF teams: 
\begin{align}\label{eq:cost-mf}
   c\left(X^{1:N}, U^{1:N}\right)= \frac{1}{N}\sum_{i=1}^{N}\hat{c}\left(X^{i},U^{i},\frac{1}{N}\sum_{p=1}^{N} \delta_{X^{p}},\frac{1}{N}\sum_{p=1}^{N} \delta_{U^{p}} \right)
\end{align}
for some measurable function $\hat{c}:\mathbb{X}\times \mathbb{U} \times \mathcal{P}(\mathbb{X})\times \mathcal{P}(\mathbb{U})\to \mathbb{R}_{+}$, where the sets of probability measures are endowed with the weak convergence topology. For such team problems, each DM's running cost is weakly connected to the entire population of DMs only through the empirical of states and actions.

\subsubsection{$N$-DM Teams and Their MF Limits \PIN .}

To study the MF limit of exchangeable teams, we focus on the cost function in \eqref{eq:cost-mf}, which satisfies Assumption \ref{assump1}. 
 For this problem, the expected cost of a policy $\pmb{\gamma^{1:N}}\in \Gamma_{N}$ is given by
\begin{align}\label{eq:JN}
J_{N}(\pmb{\gamma^{1:N}}):= \mathbb{E}^{\pmb{\gamma^{1:N}}}\left[\int_{0}^{T} \frac{1}{N}\sum_{i=1}^{N}\hat{c}\left(X_{t}^{i},U_{t}^{i},\frac{1}{N}\sum_{p=1}^{N} \delta_{X_{t}^{p}},\frac{1}{N}\sum_{p=1}^{N} \delta_{U_{t}^{p}} \right) dt\right],
\end{align}
 where the common cost function $\hat{c}:\mathbb{X}\times \mathbb{U} \times \mathcal{P}(\mathbb{X})\times \mathcal{P}(\mathbb{U})\to \mathbb{R}_{+}$ is a measurable function. 
The goal of DMs is to find globally optimal solutions introduced in Definition \ref{def:GOS}. The corresponding MF limiting problem \PIN\ is defined as the team problem with the expected cost given by the limsup of \eqref{eq:JN}, i.e., 
\begin{align}
   J_{\infty}(\underline{\pmb{\gamma}}):= \limsup_{N\to \infty} \mathbb{E}^{\underline{\pmb{\gamma}}}\left[\int_{0}^{T} \frac{1}{N}\sum_{i=1}^{N}\hat{c}\left(X_{t}^{i},U_{t}^{i},\frac{1}{N}\sum_{p=1}^{N} \delta_{X_{t}^{p}},\frac{1}{N}\sum_{p=1}^{N} \delta_{U_{t}^{p}} \right) dt\right],
\end{align}
for a given policy $\underline{\pmb{\gamma}}:=(\pmb{\gamma^1}, \pmb{\gamma^2}, \ldots)$ in $\Gamma:=\prod_{i=1}^{\infty} \Gamma^{i}$. Throughout the entire paper, we use the notation $\underline{a}$ for any variable $a$ to denote the infinite sequence $(a^{1}, a^{2}, \ldots)$. Next, we define global optimality for \PIN.

\begin{definition}[Global Optimality for \PIN]
An admissible policy $\underline{\pmb{\gamma^{\star}}}\in \Gamma$ is globally optimal if 
\begin{align}
    \inf_{\underline{\pmb{\gamma}}\in \Gamma}J_{\infty}(\underline{\pmb{\gamma}})=  J_{\infty}(\underline{\pmb{\gamma^{\star}}}).
\end{align}
\end{definition}

\subsection{Existence of a Symmetric Markovian Optimal Policy for \PN\ and \PIN}

This section establishes the existence and structural results for a globally optimal policy of \PN\ and \PIN\ under the following convexity condition.

\begin{assumption}\label{assump:convex}
$\mathbb{U}$ is convex and $J(\pmb{\gamma^{1:N}})$ is convex in $\pmb{\gamma^{1:N}}$.
\end{assumption}

We call teams satisfying  Assumption \ref{assump:convex} by \emph{convex teams}. A class of team problems is provided in Appendix \ref{sec:examples} that satisfies   Assumption \ref{assump:convex}.

\subsection{Existence of a Symmetric Markovian Globally Optimal Solutions for \PN.}\label{sec:exi}

In this section, we establish the existence of a globally optimal solution for convex teams \PN\ that is symmetric and Markovian under continuity conditions. We first characterize the structural properties of a globally optimal solution.

In the following definition, we formally introduce a set of Markovian policies as a subset of admissible policies.

\begin{definition}[(deterministic) Markov Policies]\label{def:det-Mark}
     For each DM$^{i}$, a (deterministic) policy $\pmb{\gamma}^{i}$ is Markovian if for any $t\in [0,T]$,  $\gamma^{i}_{t}:\mathbb{X} \to \mathbb{U}$ such that $U_{t}^{i}$ is measurable with respect to the $\sigma$-field generated by only $(X_{t}^{i}, t)$.
\end{definition}

We denote the space of deterministic Markovian policies by $\Gamma_{N}^{\sf M}$. In this section, we address the following two questions:
\begin{itemize}
    \item [{\bf Q1)}] {\it Can we, without any loss in the team's performance, restrict the search for a globally optimal policy for \PN\ to Markovian policies?}
    \item [{\bf Q2)}] {\it If the cost is exchangeable among DMs, then can we, without any loss, restrict the search for a globally optimal policy for \PN\ to symmetric (identical) Markovian policies?} 
\end{itemize}

We affirmatively address these two questions under some conditions. Specifically, we begin by addressing Q1. {We first note that for any Markov policy, following \cite[Theorem 2.2.12]{arapostathis2012ergodic}, under  Assumption \ref{assump:existence},  \eqref{eq:st-dy-N} admits a pathwise unique strong solution which is also a strong Feller process
(see \cite[Theorem 2.2.12]{arapostathis2012ergodic}).} In the following theorem, we show that the search for globally optimal policies for \PN\ can be confined to Markovian policies without any loss in the team's performance. The proof of the following theorem can be found in Appendix \ref{App:the1}.

\begin{theorem}\label{the:1}
Consider \PN\ under Assumption \ref{assump:existence}. Then:
\begin{align}\label{eq:sec2-4}
    \inf_{\pmb{\gamma^{1:N}}\in \Gamma_{N}}J_N(\pmb{\gamma^{1:N}}) = \inf_{\pmb{\gamma^{1:N}}\in \Gamma_{N}^{\sf M}}J_N(\pmb{\gamma^{1:N}}).
\end{align}
\end{theorem}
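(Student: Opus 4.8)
\textbf{Proof proposal for Theorem \ref{the:1}.}

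The plan is to show that every admissible (history-dependent) policy can be mimicked, in the sense of inducing the same joint marginal flow of $(X_t^{1:N}, U_t^{1:N})$ at each time $t$, by a Markov policy, from which equality of the costs in \eqref{eq:sec1-2} follows since the running cost $c$ depends only on the instantaneous values $(X_t^{1:N}, U_t^{1:N})$. The key observation is that the dynamics \eqref{eq:st-dy-N} are \emph{decoupled} across DMs and each drift/diffusion pair $(b_t, \sigma_t)$ depends only on $(X_t^i, U_t^i)$; hence the natural object to control is, for each DM$^i$ separately, the one-dimensional-in-time flow of marginals $\mu_t^i := \mathrm{Law}(X_t^i, U_t^i)$. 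First I would fix an arbitrary $\pmb{\gamma^{1:N}} \in \Gamma_N$ and, for each $i$ and each $t$, disintegrate the joint law of $(X_{[0,t]}^i, U_t^i)$ to obtain the conditional law $\gamma_t^{i,\mathrm{rel}}(du \mid X_t^i)$ of $U_t^i$ given the current state $X_t^i$ — a \emph{relaxed} Markov policy. One then argues that this relaxed Markov policy reproduces the same marginal flow $(\mathrm{Law}(X_t^i, U_t^i))_{t \in \mathbb{T}}$: this is the standard mimicking/Markovian-projection argument (in the spirit of Gyöngy's lemma / Krylov's Markov selection), which uses that the marginals $\mathrm{Law}(X_t^i)$ solve the same Fokker--Planck equation whether the control is fed back through the full history or through the conditional expectation given the current state, because in the generator only $\mathbb{E}[b_t(X_t^i, U_t^i)\mid X_t^i]$ and $\sigma_t(X_t^i)$ enter. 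Nondegeneracy (Assumption \ref{assump:existence}(iii)) together with the strong Feller property quoted from \cite[Theorem 2.2.12]{arapostathis2012ergodic} is what guarantees the mimicking SDE with the projected drift admits a (unique) solution with the prescribed marginals.

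Next I would pass from relaxed Markov policies to \emph{deterministic} Markov policies in $\Gamma_N^{\sf M}$. Here convexity of $\mathbb{U}$ (available once we are in the regime of Assumption \ref{assump:convex}, though Theorem \ref{the:1} as stated only needs Assumption \ref{assump:existence}, so I would instead invoke a chattering/relaxed-control density argument) lets us replace the kernel $\gamma_t^{i,\mathrm{rel}}(\cdot\mid x)$ by its barycenter $\bar\gamma_t^i(x) := \int_{\mathbb{U}} u\, \gamma_t^{i,\mathrm{rel}}(du\mid x)$ when $b_t, c$ are affine in $u$; in the general measurable case one instead uses that relaxed controls can be approximated by precise controls in the appropriate (Young-measure) topology, and that, since we are only asked for an infimum identity rather than attainment, it suffices that $\inf$ over deterministic Markov policies is $\le \inf$ over relaxed Markov policies is $\le \inf$ over all of $\Gamma_N$; the reverse inequality $\ge$ is trivial since $\Gamma_N^{\sf M} \subseteq \Gamma_N$. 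Putting the two inequalities together yields \eqref{eq:sec2-4}.

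Concretely the steps are: (1) reduce to controlling marginal flows $(\mu_t^i)_t$ since $c$ and the SDE coefficients are local in time and decoupled across $i$; (2) given any $\pmb{\gamma^{1:N}}$, build the conditional (relaxed Markov) policy and invoke the mimicking theorem for degenerate-free SDEs to show it reproduces each $\mu_t^i$, hence the same cost; (3) approximate relaxed Markov policies by deterministic Markov policies in $\Gamma_N^{\sf M}$ to within $\varepsilon$ in cost; (4) take $\varepsilon \downarrow 0$ and combine with the trivial inclusion $\Gamma_N^{\sf M}\subseteq\Gamma_N$.

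The main obstacle I anticipate is step (2): rigorously justifying that the Markovian projection SDE $dX_t^i = \bar b_t(X_t^i)\,dt + \sigma_t(X_t^i)\,dW_t^i$, with $\bar b_t(x) = \int b_t(x,u)\,\gamma_t^{i,\mathrm{rel}}(du\mid x)$, has a solution whose time-marginals coincide with those of the original process. The projected drift $\bar b_t$ inherits only measurability (not continuity) in $x$, so one cannot appeal to classical strong existence; instead one needs weak existence via the martingale problem, uniqueness of the Fokker--Planck flow under the nondegeneracy and affine-growth conditions (this is exactly where Assumption \ref{assump:existence}(ii)--(iii) and the second-moment bound on $X_0^i$ are used), and a measurable-selection step to turn the relaxed control into an honest feedback law. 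Handling the time-regularity of $t\mapsto \gamma_t^{i,\mathrm{rel}}$ and the joint measurability of $(t,x)\mapsto \bar b_t(x)$ carefully is the delicate technical core; everything else is bookkeeping.
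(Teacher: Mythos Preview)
Your approach differs from the paper's. The paper does not mimic marginals; instead it argues one DM at a time: fix $\pmb{\gamma^{-i}}$ arbitrarily, use the decoupling of \eqref{eq:st-dy-N} together with the independence of $(X_0^j,W^j)_j$ to see that $\omega_0^i:=(X^{-i}_{[0,T]},U^{-i}_{[0,T]})$ is independent of $(X^i_{[0,T]},U^i_{[0,T]})$, integrate $\omega_0^i$ out to obtain a reduced running cost $\hat c^i_t(X^i_t,U^i_t):=\int c(X_t^{1:N},U_t^{1:N})\,P^i(d\omega_0^i)$, and observe that DM$^i$ now faces a \emph{classical single-agent} controlled diffusion. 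The paper then invokes the standard single-agent fact that Markov policies suffice for the infimum in such a problem, and closes by sequential replacement: given any $\epsilon$-optimal $\pmb{\gamma^{1:N}}$, replace $\pmb{\gamma^1}$ by a Markov near-best-response (keeping the others fixed), then $\pmb{\gamma^2}$, and so on through $\pmb{\gamma^N}$. All the hard analysis you outline (mimicking, Fokker--Planck uniqueness, measurable selection, chattering) is outsourced to the cited single-agent theorem, which is what makes the paper's proof a few lines.

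Your route is viable in principle but heavier --- you are essentially re-proving that single-agent Markovian-sufficiency theorem via Gy\"ongy projection plus a relaxed-to-precise passage, in parallel for each $i$. Two points deserve tightening. First, step (1) is slightly misstated: $c$ is \emph{not} decoupled across $i$; what you actually need (and what holds) is that under any decentralized admissible policy the processes $(X^i_{[0,T]},U^i_{[0,T]})_{i}$ are mutually \emph{independent}, so the joint time-$t$ law is the product $\prod_i \mu_t^i$ and hence is preserved once each $\mu_t^i$ is. Second, your step (3) is the real gap under the stated hypotheses: Theorem~\ref{the:1} assumes only that $c$ is Borel measurable (Assumption~\ref{assum:5-4} on continuity is \emph{not} in force here), and the chattering/density argument you propose to pass from relaxed Markov to deterministic Markov typically needs continuity of $c$ in $u$ to control the cost along the approximating sequence. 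The paper sidesteps this entirely by delegating to the classical result; if you want to keep the direct route you must either import that result at the last step anyway, or supply a separate argument (e.g.\ via Krylov-type selection or the HJB value function under nondegeneracy) that $\inf$ over deterministic Markov equals $\inf$ over relaxed Markov for merely measurable $c$ --- which is itself nontrivial and is exactly what the black-box citation covers.
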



In Theorem \ref{the:1}, we establish that without any loss, the globally optimal policies can be assumed to be Markovian. The idea behind the proof of Theorem \ref{the:1} is as follows: We first fix the policies of DMs to arbitrary admissible (possibly path-dependent policies), and consider the optimization of a deviating DM. Since the dynamics in \eqref{eq:st-dy-N} are decoupled, the independence of random variables stated allows us to define a new cost function for a deviating DM such that this optimization problem follows the classical continuous-time Markov decision problem. As a result, an optimal solution exists that is Markovian for the deviating DM. Then, we can reason that any (approximate) globally optimal solution can be replaced by an (approximate) globally optimal Markovian solution.

We use Theorem \ref{the:1} to address Q2. In the following, we show that under  Assumption \ref{assump:existence}--Assumption \ref{assump:convex}, without loss, the search for globally optimal policies can be confined to symmetric Markovian policies, which is the set of identical policies $(\pmb{\gamma}, \ldots, \pmb{\gamma})\in \Gamma^{\sf M}_{N}$ among DMs. We denote the set of all symmetric Markovian policies by $\Gamma^{{\sf SYM, M}}_{N}\subset \Gamma^{{\sf M}}_{N}$. We now present the following theorem for convex exchangeable teams, where the cost function and action spaces satisfy Assumption \ref{assump1} and Assumption \ref{assump:convex}. The proof of the following theorem is provided in Appendix \ref{App:the2}.

\begin{theorem}\label{the:2}
    Consider \PN\ under Assumption \ref{assump:existence} and Assumption \ref{assump1}. Then, 
    \begin{itemize}
        \item [(i)] For any $\tau\in S_{N}$, 
        $J_N(\pmb{\gamma^{1:N}}) = J_N(\pmb{\gamma}^{\tau(1):\tau(N)}).$
        \item [(ii)] If in addition Assumption \ref{assump:convex} holds, then
\begin{align}\label{eq:sec2-12}
    \inf_{\pmb{\gamma^{1:N}}\in \Gamma_{N}}J_N(\pmb{\gamma^{1:N}}) = \inf_{\pmb{\gamma^{1:N}}\in \Gamma_{N}^{\sf SYM,M}}J_N(\pmb{\gamma^{1:N}}).
\end{align}
    \end{itemize}
\end{theorem}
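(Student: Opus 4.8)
The plan is to establish part (i) directly from the exchangeability of the cost (Assumption \ref{assump1}) together with the symmetry of the underlying noise and initial conditions, and then to deduce part (ii) by a convexity/averaging argument combined with Theorem \ref{the:1}. For part (i), fix a permutation $\tau \in S_N$ and a policy profile $\pmb{\gamma^{1:N}}$. Since $(W^i)_{i}$ are i.i.d., $(X_0^i)_i$ are i.i.d., and the dynamics \eqref{eq:st-dy-N} are decoupled with identical $b_t,\sigma_t$ across DMs, the joint law of the controlled state–action paths $(\pmb{X}^{1:N},\pmb{U}^{1:N})$ under $\pmb{\gamma^{1:N}}$ coincides with the joint law of $(\pmb{X}^{\tau(1):\tau(N)},\pmb{U}^{\tau(1):\tau(N)})$ under the relabelled profile $\pmb{\gamma}^{\tau(1):\tau(N)}$: relabelling the driving noises is a measure-preserving transformation of $(\Omega,\mathcal F,\mathbb P)$, and each DM's policy sees only its own private history, so DM$^{\tau(i)}$ under the permuted profile is a measurable function of the path that plays the role of DM$^i$'s path. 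Plugging into \eqref{eq:sec1-2} and using that $c$ is permutation invariant, the expectations coincide, giving $J_N(\pmb{\gamma^{1:N}}) = J_N(\pmb{\gamma}^{\tau(1):\tau(N)})$.

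For part (ii), the inequality $\inf_{\Gamma_N} J_N \le \inf_{\Gamma_N^{\sf SYM,M}} J_N$ is immediate since $\Gamma_N^{\sf SYM,M}\subseteq \Gamma_N$. For the reverse inequality, by Theorem \ref{the:1} it suffices to show that any Markovian profile $\pmb{\gamma^{1:N}}\in\Gamma_N^{\sf M}$ can be matched (or beaten) by a symmetric Markovian profile. The idea is to consider, for each DM, the ``mixed'' Markov policy obtained by averaging the $N$ coordinate policies: one wants a single Markov kernel $\bar{\pmb\gamma}$ such that the profile $(\bar{\pmb\gamma},\dots,\bar{\pmb\gamma})$ does no worse. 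Here convexity enters: by Assumption \ref{assump:convex}, $\pmb{\gamma^{1:N}}\mapsto J_N(\pmb{\gamma^{1:N}})$ is convex, and by part (i) it is invariant under simultaneous relabelling of the $N$ policies. Averaging over all $\tau\in S_N$,
\begin{align*}
J_N\Bigl(\tfrac{1}{N!}\sum_{\tau\in S_N}\pmb{\gamma}^{\tau(1):\tau(N)}\Bigr) \le \tfrac{1}{N!}\sum_{\tau\in S_N} J_N(\pmb{\gamma}^{\tau(1):\tau(N)}) = J_N(\pmb{\gamma^{1:N}}),
\end{align*}
and the convex combination $\tfrac{1}{N!}\sum_\tau \pmb{\gamma}^{\tau(1):\tau(N)}$ has all $N$ coordinates equal to the same averaged policy $\bar{\pmb\gamma} := \tfrac1N\sum_{j=1}^N \pmb{\gamma^j}$ (as an element of the convex policy space), which is still Markovian since each $\pmb{\gamma^j}$ depends only on $(X_t^i,t)$. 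Hence the symmetric Markovian profile $(\bar{\pmb\gamma},\dots,\bar{\pmb\gamma})$ attains a cost $\le J_N(\pmb{\gamma^{1:N}})$, and taking infima over $\Gamma_N^{\sf M}$ on the right (and using Theorem \ref{the:1}) yields \eqref{eq:sec2-12}.

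The main obstacle is making the convex-combination step rigorous: one must work in a space of policies on which convex combinations make sense and on which $J_N$ is genuinely convex and affine-compatible with the averaging — this is why randomized (relaxed) Markov policies are the right object, since convex combinations of deterministic Markov feedback maps need not be deterministic feedback maps but are naturally realized as randomized Markov policies, and Assumption \ref{assump:convex} must be read in that space. One has to check that (a) the averaged policy still induces a well-posed state process (invoking the strong-solution/strong-Feller statement cited after Definition \ref{def:det-Mark} under Assumption \ref{assump:existence}), (b) the cost functional extends to this larger class with the stated convexity, and (c) the ``all coordinates equal'' claim for $\tfrac{1}{N!}\sum_\tau \pmb\gamma^{\tau(1):\tau(N)}$ is literally the statement that this point lies in the symmetric subspace. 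Once the policy space and its convex structure are pinned down, parts (i)–(ii) follow as above; the remaining verifications are routine given Theorem \ref{the:1} and Assumptions \ref{assump:existence}–\ref{assump:convex}.
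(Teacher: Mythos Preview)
Your proposal is correct and follows essentially the same approach as the paper: part (i) via the relabelling/measure-preserving argument using the i.i.d.\ structure of $(X_0^i,W^i)$ and the exchangeability of $c$, and part (ii) via Jensen's inequality applied to the average over all permutations, combined with Theorem~\ref{the:1}. One minor remark: your concern in the final paragraph about needing randomized policies to make sense of the convex combination is unnecessary here, since Assumption~\ref{assump:convex} explicitly includes that $\mathbb{U}$ is convex, so the pointwise average $\bar{\pmb\gamma}=\tfrac{1}{N}\sum_{j=1}^N \pmb{\gamma^j}$ of deterministic Markov feedback maps $\gamma_t^j:\mathbb{X}\to\mathbb{U}$ is itself a deterministic Markov feedback map---this is exactly how the paper reads the convex structure, and no passage to relaxed controls is required for Theorem~\ref{the:2}.
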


Theorem \ref{the:2}(ii) yields that we can assume that a globally optimal solution belongs to symmetric Markovian policies provided that the additional convexity conditions in Assumption \ref{assump:convex} hold. We note that Theorem \ref{the:1} and Theorem \ref{the:2} do not guarantee the existence of a globally optimal solution. Theorem \ref{the:2}(i) follows from exchangeability of the cost function in Assumption \ref{assump1}, the fact that dynamics are symmetric. Part (ii) of Theorem \ref{the:2} follows from Jensen's inequality and part (i), using the convexity conditions in Assumption \ref{assump:convex}. In particular, we can show that given any admissible policy, its convex combination by averaging over all permutations of policies of DMs performs at least as well as the given policy. We can replace the convexity condition in Assumption \ref{assump:convex} to a weaker assumption that $J(\pmb{\gamma^{1:N}})$ is convex only for Markovian policies $\pmb{\gamma^{1:N}}\in \Gamma^{M}_{N}$ since we can restrict the policies to be Markovian without any loss thanks to Theorem \ref{the:1}.

In the following, we establish the existence of a globally optimal policy for \PN. By Theorem \ref{the:1}, without any loss, we can restrict the search for globally optimal policies to Markov policies. In the following, we introduce the set of randomized (relaxed) Markov policies for each DM, which will be used in our analysis and results.

\begin{definition}[Set of randomized Markov  policies]\label{def:Rand-Mark}
For each DM$^i$, the set of randomized Markov policies is the set of all Borel measurable functions $\pmb{\nu^i}:[0,T]\times \mathbb{X}\to \mathcal{P}(\mathbb{U})$ such that the $\mathcal{P}(\mathbb{U})$-valued action process is given by $U_t^i=\nu_{t}^{i}(X_{t}^{i})$ which is adapted to $\sigma(X_t^i,t)$.
\end{definition}

 Denote the set of randomized Markov policies by $\mathcal{M}^i$. The corresponding subset of deterministic Markovian policies includes policies of the form $\nu_{t}^{i}(X_{t}^i)(\cdot)=\delta_{\{\gamma_{t}(X_t^i)\}}(\cdot)$ for a (deterministic) policy $\pmb{\gamma}\in \Gamma^i$ in Definition \ref{def:det-Mark}. We endow the set of randomized Markov policies with Borkar's topology in \cite{borkar1989topology} under which $\pmb{\nu^i_n}$ converges to $\pmb{\nu^i}$ as $n\to \infty$ if 
\begin{align}
    &\lim_{n\to \infty}\int_{0}^{T}\int_{\mathbb{X}} f(t,x) \int_{\mathbb{U}} g(t, x, u) \nu^i_{n,t}(x)(du)\ dx dt\nonumber\\
    &=\int_{0}^{T}\int_{\mathbb{X}} f(t,x) \int_{\mathbb{U}} g(t, x, u) \nu^i_{t}(x)(du)\ dxdt \label{eq:topology}
\end{align}
for all $f\in L_{1}([0,T]\times \mathbb{X})\cap L_{2}([0,T]\times\mathbb{X})$ and $g\in \mathcal{C}_b([0,T]\times\mathbb{X}\times \mathbb{U})$, where $\mathcal{C}_b$ denotes the set of all bounded continuous function and $L_1$ and $L_2$ denote the set of integrable and square-integrable functions, respectively.

We note that the aforementioned private randomization of DM's policy does not contribute to an improvement in team performance \cite{saldi2022geometry,YukselSaldiSICON17}; thus, the expansion of the policy space remains justified.  We note that Borkar’s topology is equivalent to Young's topology if the reference measure used for the change of measure is finite and has an everywhere positive density \cite{yuksel2023borkar}. 

We now present the following lemmas from \cite[Lemma 4.1 and Theorem 4.1]{borkar1989topology} (see also some technical relaxations in \cite{pradhan2024continuity}) that will be used for our main theorems for \PN\ and \PIN. Denote the law of $\pmb{X^{i}}$ on the path space under a randomized Markov policy $\pmb{\nu^i}$ by $\pmb{\mu^{i}}(\cdot ;\pmb{\nu^{i}}):=\mathcal{L}(\pmb{X^{i}};\pmb{\nu^{i}})(\cdot)\in \mathcal{P}(\mathcal{C}([0,T];\mathbb{X}))$. 

\begin{lemma}\label{lem:Borkar}
Suppose that $\mathbb{U}$ is compact. Then, $\mathcal{M}^i$ is compact for all $i\in \mathcal{N}$.
\end{lemma}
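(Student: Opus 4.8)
\textbf{Proof proposal for Lemma \ref{lem:Borkar}.}

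The plan is to identify $\mathcal{M}^i$ with a closed subset of a compact space under Borkar's topology. First I would recall that a randomized Markov policy $\pmb{\nu^i}$ is a Borel measurable map $[0,T]\times\mathbb{X}\to\mathcal{P}(\mathbb{U})$, and the natural ambient object is the space of (equivalence classes of, modulo the reference measure $dt\,dx$ — or rather, since $dx$ is Lebesgue measure on $\mathbb{R}^d$, a suitable $\sigma$-finite reference measure) measurable $\mathcal{P}(\mathbb{U})$-valued functions, topologized by the convergence in \eqref{eq:topology}. When $\mathbb{U}$ is compact, $\mathcal{P}(\mathbb{U})$ is a compact metric space in the weak topology, and one can view each $\pmb{\nu^i}$ as a "Young measure" on $[0,T]\times\mathbb{X}$ with values in $\mathcal{P}(\mathbb{U})$. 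The key structural fact, which is exactly the content of \cite[Lemma 4.1]{borkar1989topology} (and the relaxations in \cite{pradhan2024continuity}), is that this space of Young measures is compact and metrizable under the topology generated by the test functions $f\in L_1\cap L_2$ and $g\in\mathcal{C}_b$ appearing in \eqref{eq:topology}. Since the paper explicitly says we are quoting this lemma from \cite{borkar1989topology}, the proof should consist of invoking it with $\mathbb{U}$ compact, noting that $\mathcal{M}^i$ is precisely the set over which the topology \eqref{eq:topology} is defined (there is no additional constraint cutting it down), so compactness is immediate.

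The key steps, in order, would be: (1) fix the reference measure on $[0,T]\times\mathbb{X}$ used in \eqref{eq:topology} and note it has an everywhere positive density, so Borkar's topology coincides with Young's topology (as remarked in the text, citing \cite{yuksel2023borkar}); (2) recall that for compact $\mathbb{U}$, the space of measurable maps from a $\sigma$-finite measure space into $\mathcal{P}(\mathbb{U})$ — i.e. the relaxed/Young-measure space — is sequentially compact under Young's topology, which is the statement of \cite[Theorem 4.1]{borkar1989topology}; (3) since $\mathcal{M}^i$ as defined in Definition \ref{def:Rand-Mark} is exactly this space (the condition "$U_t^i=\nu_t^i(X_t^i)$ adapted to $\sigma(X_t^i,t)$" is just the definition of being a Markov randomized policy, not an extra closedness-violating restriction), conclude $\mathcal{M}^i$ is compact; (4) compactness for each $i\in\mathcal{N}$ follows since the spaces are identical by the exchangeability setup (same $\mathbb{X}$, $\mathbb{U}$ for all DMs).

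The main obstacle — or rather the only subtlety — is making sure the ambient space in which compactness is asserted is genuinely the same as $\mathcal{M}^i$, in particular that we are taking the closure/completion correctly: Young's topology is defined on equivalence classes modulo the reference measure, and one needs the reference measure to be finite (or at least to be able to reduce to the finite case) with positive density for the equivalence with Borkar's topology and for metrizability. Since $\mathbb{X}=\mathbb{R}^d$ is not of finite Lebesgue measure, one should either work with a finite equivalent reference measure (e.g. a Gaussian density times $dt$) or cite the technical relaxations in \cite{pradhan2024continuity} that handle exactly this non-finite state-space situation; the text has already flagged both of these points, so the proof can simply point to them. Beyond that, the argument is a direct citation of \cite[Lemma 4.1 and Theorem 4.1]{borkar1989topology}, and no genuinely new work is needed.
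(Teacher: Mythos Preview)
Your proposal is correct and takes essentially the same approach as the paper: the lemma is stated as a direct citation of \cite[Lemma 4.1 and Theorem 4.1]{borkar1989topology} (with the technical relaxations in \cite{pradhan2024continuity} for the unbounded state space), and your write-up simply unpacks what that citation amounts to. The paper itself does not provide an independent proof here.
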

\begin{lemma}\label{lem:Borkar-cont}
For all $i\in \mathcal{N}$, if $\pmb{\nu^i_n}$ converges to $\pmb{\nu^i}$ in Borkar's topology as $n\to \infty$, then $\pmb{\mu^{i}}(\cdot;\pmb{\nu^{i}_n})$ converges weakly to $\pmb{\mu^{i}}(\cdot;\pmb{\nu^{i}})$ as $n\to \infty$, i.e., $G^i:\pmb{\nu^{i}}\mapsto \pmb{\mu^{i}}(\cdot;\pmb{\nu^{i}})$ is continuous for all $\pmb{\nu^{i}}$ for all $i\in \mathcal{N}$.
\end{lemma}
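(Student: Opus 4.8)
The plan is to combine tightness of the laws with a martingale-problem identification of the limit, as in \cite{borkar1989topology}; the nondegeneracy Assumption \ref{assump:existence}(iii) is precisely what lets the weak-type convergence \eqref{eq:topology} be transferred from deterministic integrals to path functionals of the diffusion. Fix $i$ and suppress it, since all DMs have identical dynamics. Let $\mathbb{P}^n$ denote the law on $\mathcal{C}([0,T];\mathbb{X})$ of the solution of \eqref{eq:st-dy-N} driven by $\pmb{\nu^i_n}$. First I would prove tightness of $\{\mathbb{P}^n\}_n$: the affine growth bound Assumption \ref{assump:existence}(ii), together with $\mathbb{E}[|X_0|^2]<\infty$, Gr\"onwall's inequality and the Burkholder--Davis--Gundy inequality, gives the $n$-uniform estimates $\sup_n\mathbb{E}^n[\sup_{t\le T}|X_t|^2]<\infty$ and $\sup_n\mathbb{E}^n[|X_t-X_s|^4]\le C|t-s|^2$, so the Kolmogorov--Chentsov criterion applies. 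It then suffices to show that every weak subsequential limit $\mu$ of $\{\mathbb{P}^n\}_n$ equals $\pmb{\mu^i}(\cdot;\pmb{\nu^i})$.

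To identify $\mu$, I would show that it solves the martingale problem for the control-averaged generator $(\mathcal{A}^{\pmb{\nu^i}}_r\varphi)(x)=\int_{\mathbb{U}}b_r(x,u)\cdot\nabla\varphi(x)\,\nu^i_r(x)(du)+\mathrm{Tr}(a_r(x)\nabla^2\varphi(x))$ for $\varphi\in\mathcal{C}_c^\infty(\mathbb{X})$, $a_r=\tfrac12\sigma_r\sigma_r^T$, with initial law $\mathcal{L}(X_0)$. By It\^o's formula this holds under each $\mathbb{P}^n$ with $\pmb{\nu^i}$ replaced by $\pmb{\nu^i_n}$, and I would pass to the limit term by term: the second-order part $\mathrm{Tr}(a_r(x)\nabla^2\varphi(x))$ does not involve the control and is bounded continuous on $\mathrm{supp}\,\varphi$, so those terms and the $\varphi(X_t)-\varphi(X_s)$ terms pass by $\mathbb{P}^n\Rightarrow\mu$. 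The only genuinely delicate term is the drift functional $\int_s^t\int_{\mathbb{U}}g(r,X_r,u)\,\nu^i_{n,r}(X_r)(du)\,dr$ with $g(r,x,u):=b_r(x,u)\cdot\nabla\varphi(x)$, which lies in $\mathcal{C}_b([0,T]\times\mathbb{X}\times\mathbb{U})$ since $\nabla\varphi$ is compactly supported and $b$ is jointly continuous of affine growth. Here I would use nondegeneracy: after localizing to a ball $\mathcal{B}_R$ --- on which the coefficients are bounded and uniformly elliptic, the exterior contribution being controlled uniformly in $n$ by the moment bound above --- the time-marginal (and finite-dimensional) laws of $\pmb{X}$ under $\mathbb{P}^n$ have densities dominated, uniformly in $n$, by a fixed $L_1\cap L_2$ profile (Aronson-type estimates). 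Rewriting the expectation of the drift functional as a deterministic integral of $\int_{\mathbb{U}}g(r,\cdot,u)\,\nu^i_{n,r}(\cdot)(du)$ against these densities, one passes to the limit in the densities via $\mathbb{P}^n\Rightarrow\mu$ and in $\nu^i_n$ via \eqref{eq:topology} (after a routine truncation placing the spatial weight in $L_1\cap L_2$), dominated convergence handling the interchange. This shows $\mu$ solves the stated martingale problem.

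Finally, under Assumption \ref{assump:existence} the Markovian SDE \eqref{eq:st-dy-N} driven by $\pmb{\nu^i}$ has a pathwise unique strong solution, cf.\ \cite[Theorem 2.2.12]{arapostathis2012ergodic}, so this martingale problem is well posed with unique solution $\pmb{\mu^i}(\cdot;\pmb{\nu^i})$; hence every subsequential limit equals $\pmb{\mu^i}(\cdot;\pmb{\nu^i})$ and the whole sequence $\pmb{\mu^i}(\cdot;\pmb{\nu^i_n})$ converges weakly to it, i.e.\ $G^i$ is continuous. I expect the drift-convergence step to be the main obstacle, since Borkar's topology only controls \emph{deterministic} integrals of the relaxed drift against Lebesgue-absolutely-continuous weights, whereas the martingale problem requires evaluation along the random diffusion path; reconciling the two is exactly what the nondegeneracy of $\sigma\sigma^T$ is for. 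A minor additional point is the time-regularity of $b$: if $b$ is merely Carath\'eodory in $t$, one appeals instead to the relaxed versions of Borkar's topology in \cite{yuksel2023borkar,pradhan2024continuity}.
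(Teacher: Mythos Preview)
Your proposal is correct and aligns with the paper's treatment. The paper does not supply its own proof here; it simply records the lemma as \cite[Theorem~4.1]{borkar1989topology} (with technical relaxations in \cite{pradhan2024continuity}). Borkar's original argument proceeds along the same skeleton you outline: tightness of the path laws, then identification of any subsequential limit using the Aronson-type Gaussian upper bound on the transition densities $p_n(t,x_0,\cdot)$, the nondegeneracy in Assumption~\ref{assump:existence}(iii) being precisely what furnishes an $L_1\cap L_2$ spatial weight against which the defining convergence~\eqref{eq:topology} can be invoked for the drift term. The one cosmetic difference is that Borkar formulates the identification step through Dynkin's semigroup identity $\mathcal{T}_{t,n}f-f=\int_0^t\mathcal{T}_{s,n}\bigl((\mathcal{A}f)(\cdot,\nu_{s,n}(\cdot))\bigr)\,ds$ and passes to the limit in $\mathcal{T}_{t,n}$ via Arzel\`a--Ascoli on the equicontinuous family $\{p_n\}$ followed by $L_1$-convergence of the densities, whereas you phrase it as a martingale problem and appeal to well-posedness at the end; both routes rest on the same density estimate and are interchangeable. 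Your remark that the drift-convergence step is the crux, and that nondegeneracy is what bridges Borkar's deterministic-integral topology with evaluation along the random path, is exactly right.
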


To establish existence results, our analysis relies on the compactness of the underlying policy space and continuity of the induced probability measure on the path space in policies. Lemma \ref{lem:Borkar} and Lemma \ref{lem:Borkar-cont} guarantee that $\mathcal{M}^i$ is a compact space under Borkar's topology in \eqref{eq:topology} and the probability measure on the path space is continuous in policies, rendering it suitable for our analysis. In the subsequent theorem, we establish the existence of a globally optimal solution for convex teams of \PN\ that is both Markovian and symmetric. However, to achieve this, we require the following assumption on the continuity of the cost function.

\begin{assumption}\label{assum:5-4}
$c$ in \eqref{eq:sec1-2} ($\hat{c}$ in \eqref{eq:cost-mf}) is continuous in all its arguments.
\end{assumption}

The proof of the following theorem is provided in Appendix \ref{App:the3}.

\begin{theorem}\label{the:3}
    Consider \PN\ under Assumption \ref{assump:existence} and Assumption \ref{assum:5-4}. Let $\mathbb{U}$ be compact. Then:
\begin{itemize}
        \item [(i)]  There exists a randomized globally optimal policy $(\pmb{{\nu}^{1}}, \ldots, \pmb{{\nu}^{N}})$ that is Markovian, i.e., $\pmb{{\nu}^{i}}\in \mathcal{M}^i$ for all $i\in \N$.
        \item [(ii)] If additionally  Assumption \ref{assump1} and Assumption \ref{assump:convex} hold, then there exists a randomized globally optimal policy $(\pmb{{\nu}}, \ldots, \pmb{{\nu}})$ that is Markovian and symmetric.
    \end{itemize}
\end{theorem}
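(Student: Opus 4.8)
\textbf{Proof proposal for Theorem \ref{the:3}.}

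The plan is to combine the compactness of the randomized Markov policy space (Lemma \ref{lem:Borkar}), the continuity of the path-space law in the policy (Lemma \ref{lem:Borkar-cont}), and the continuity of the cost (Assumption \ref{assum:5-4}) to run a direct-method (Weierstrass-type) existence argument, and then to invoke the structural reduction of Theorem \ref{the:1} and Theorem \ref{the:2} to upgrade the optimal policy to a Markovian one, and in part (ii) to a symmetric one. First I would fix notation: for $\pmb{\nu^{1:N}} = (\pmb{\nu^1},\dots,\pmb{\nu^N}) \in \prod_{i=1}^N \mathcal{M}^i$, the processes $X^{1:N}$ are independent (the dynamics \eqref{eq:st-dy-N} are decoupled, the Wiener processes $W^i$ and initial conditions $X_0^i$ are i.i.d. across $i$, and each $\pmb{\nu^i}$ depends only on $X^i$), so the joint law $\mathcal{L}(\pmb{X^{1:N}};\pmb{\nu^{1:N}})$ on $\mathcal{C}([0,T];\mathbb{X})^N$ is exactly the product $\bigotimes_{i=1}^N \pmb{\mu^i}(\cdot;\pmb{\nu^i})$. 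By Lemma \ref{lem:Borkar-cont} each factor depends weakly-continuously on $\pmb{\nu^i}$, hence (product of weakly convergent sequences of measures on Polish spaces) the joint law depends weakly-continuously on $\pmb{\nu^{1:N}}$ in the product Borkar topology.

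Next I would write the cost as a functional on the policy space. Because the Markov policy randomizes the action given the state, one has $J_N(\pmb{\nu^{1:N}}) = \int_0^T \mathbb{E}\big[\int_{\mathbb{U}^N} c(X_t^{1:N}, u^{1:N}) \prod_{i=1}^N \nu_t^i(X_t^i)(du^i)\big]\,dt$; after averaging the running cost against the Markov kernels one obtains a bounded (since $\mathbb{U}$ compact, $c$ continuous hence bounded on the relevant compacta — I would localize/truncate or use the affine-growth moment bounds from Assumption \ref{assump:existence}(ii) to control moments of $X_t^i$ uniformly along the sequence) continuous functional of the time-marginal joint law of $(X_t^{1:N}, \text{kernels})$. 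The cleanest route is: show $\pmb{\nu^{1:N}} \mapsto J_N(\pmb{\nu^{1:N}})$ is lower semicontinuous (in fact continuous) on the compact space $\prod_{i=1}^N \mathcal{M}^i$ in the product Borkar topology, using Lemma \ref{lem:Borkar-cont} together with the definition \eqref{eq:topology} of Borkar's topology applied to the test function built from $c$ and the transition density (the nondegeneracy Assumption \ref{assump:existence}(iii) guarantees the process has a density, which is what makes $g(t,x,u) \mapsto$ the integrated cost a legitimate $\mathcal{C}_b$ test function after the relevant conditioning). Then a minimizing sequence $\pmb{\nu^{1:N}_n}$ has, by Lemma \ref{lem:Borkar}, a convergent subsequence with limit $\pmb{\nu^{1:N\star}}$, and lower semicontinuity gives $J_N(\pmb{\nu^{1:N\star}}) = \inf_{\prod \mathcal{M}^i} J_N$. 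By Theorem \ref{the:1} this infimum over randomized Markov policies equals the infimum over all admissible policies $\Gamma_N$ (the deterministic Markov policies sit inside $\mathcal{M}^i$ as Dirac kernels, and private randomization does not help, so $\inf_{\Gamma_N} J_N = \inf_{\Gamma_N^{\sf M}} J_N = \inf_{\prod\mathcal{M}^i} J_N$), proving (i).

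For (ii), under Assumptions \ref{assump1} and \ref{assump:convex} I would mimic the argument of Theorem \ref{the:2}(ii): given the optimal $\pmb{\nu^{1:N\star}}$ from (i), form its symmetrization by averaging over all permutations $\tau \in S_N$ — concretely, pick $\tau$ uniformly at random and independently of everything else, and let DM$^i$ use $\pmb{\nu^{\tau(i)\star}}$; this is a randomized symmetric policy, and since the cost is exchangeable (Assumption \ref{assump1}) and the dynamics symmetric, Theorem \ref{the:2}(i) gives $J_N(\pmb{\nu}^{\tau(1)\star:\tau(N)\star}) = J_N(\pmb{\nu^{1\star:N\star}})$ for every $\tau$, and convexity of $J_N$ in the policy (Assumption \ref{assump:convex}) plus Jensen's inequality shows the averaged/mixed symmetric policy does no worse. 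To get an \emph{identical} (non-randomized-over-permutations) symmetric Markov policy one then uses that, because the DMs are decoupled and the cost depends on policies only through the product of marginal path laws, one may further average the Markov kernels themselves: set $\bar{\nu}_t(x)(\cdot) := \frac{1}{N}\sum_{i=1}^N \nu_t^{i\star}(x)(\cdot)$, which is again a randomized Markov policy in $\mathcal{M}$, and convexity of $J_N$ together with exchangeability yields $J_N(\bar{\pmb\nu},\dots,\bar{\pmb\nu}) \le J_N(\pmb{\nu^{1\star:N\star}})$; since the right side is optimal, equality holds, giving a Markovian \emph{symmetric} randomized optimal policy.

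I expect the main obstacle to be step two: making the lower semicontinuity of $J_N$ in the Borkar topology fully rigorous. Borkar's topology tests against functions $f \in L_1 \cap L_2$ in $(t,x)$ times $g \in \mathcal{C}_b$ in $(t,x,u)$, whereas the cost integrand after conditioning is $\int c(X_t^{1:N},u^{1:N})\prod_i \nu_t^i(X_t^i)(du^i)$, which couples all $N$ state-action pairs; one must express $\mathbb{E}[\cdot]$ using the (nondegenerate, hence density-admitting) transition kernels $p_s^i(x_0,x)$ so that the cost becomes an integral of the allowed product form $\big(\prod_i f_i(t,x_i)\big)\cdot g(t,x_{1:N},u_{1:N})$ against $\prod_i \nu_{t}^i(x_i)(du_i)\,dx_i$, then pass to the limit factor-by-factor using Lemma \ref{lem:Borkar-cont} and an approximation of $c$ by functions that are continuous and compactly supported in the state variables (using the uniform-in-$n$ second-moment bounds on $X_t^i$ from Assumption \ref{assump:existence}(ii) to discard the tails). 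Handling this product structure and the truncation/tightness bookkeeping — rather than any single deep idea — is where the real work lies; everything else follows from assembling Theorems \ref{the:1}--\ref{the:2} with the compactness Lemma \ref{lem:Borkar}.
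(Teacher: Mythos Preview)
Your proposal is correct and follows essentially the same route as the paper: compactness of $\prod_i\mathcal{M}^i$ (Lemma~\ref{lem:Borkar}) plus lower semicontinuity of $J_N$ in the product Borkar topology (via Lemma~\ref{lem:Borkar-cont}, truncation $\min\{M,c\}$, and a convergence-of-integrals argument) gives existence by Weierstrass, and Theorems~\ref{the:1}--\ref{the:2} furnish the Markovian/symmetric structure. Two small remarks: for the lower semicontinuity step the paper bypasses your factor-by-factor decomposition and instead invokes the generalized dominated convergence theorem of Serfozo directly on the truncated integrand, which handles the $N$-fold coupling in one stroke; and for part~(ii) the paper reverses your order---it first restricts to symmetric Markov policies via Theorem~\ref{the:2} and then runs the existence argument on that (still compact) subset---which sidesteps the question of whether Assumption~\ref{assump:convex} (stated for deterministic $\pmb{\gamma^{1:N}}$) applies verbatim to kernel-averaged randomized policies.
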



Theorem \ref{the:3} initially establishes the existence of a randomized globally optimal policy that adheres to Markovian properties. Furthermore, under the presence of additional exchangeability and convexity conditions, as specified in Assumption \ref{assump1} and Assumption \ref{assump:convex}, the theorem ensures the existence of a globally optimal policy that is Markovian and symmetric. We note that the applicability of Theorem \ref{the:3} extends to the problem involving the MF teams with a finite number of DMs, as the cost function in this scenario is also exchangeable.

According to Theorem \ref{the:1}, we can assume, without loss of optimality, that a globally optimal solution is Markovian. This allows us to focus solely on demonstrating the existence of a globally optimal solution within the set of randomized Markov policies $\mathcal{M}^i$, as randomization does not enhance the team's performance. For part (i) of the proof, we establish that the expected cost function is lower-semi continuous for every policy in $\mathcal{M}^i$ using Lemma \ref{lem:Borkar-cont}. This leads to the existence of a globally optimal solution within $\mathcal{M}^i$, thanks to the compactness of $\mathcal{M}^i$ in Lemma \ref{lem:Borkar}. Part (ii) follows from a similar line of reasoning using the fact that under 
exchangeability and convexity in Assumption \ref{assump1} and Assumption \ref{assump:convex}, we can assume, without loss of optimality, that a globally optimal solution is Markovian and symmetric utilizing Theorem \ref{the:2}.

\begin{remark} 
Following Theorem \ref{the:3} and its proof, we can further establish the existence of a symmetric Markovian globally optimal policy that is also deterministic under sufficient regularity conditions on the cost function needed for the corresponding HJB equations: By fixing the policies of DMs to a randomized globally optimal solution that adheres to Markovian and symmetry properties (following part (ii) of Theorem \ref{the:3}), any deviating DM faces a standard stochastic control problem since the dynamics are decoupled, noise processes and initial states are independent. Hence, considering that \PN\ comprises only a finite number of DMs, we can sequentially show that HJB equations provide the necessary and sufficient conditions for the optimal solution, which is deterministic and Markovian. The assumed exchangeability and convexity of the team problem, as presented in Theorem \ref{the:2} then can be used to establish the existence of a globally optimal solution that is deterministic, Markovian, and symmetric.
\end{remark} 



\subsubsection{A MF Optimal Solution for \PIN\ as a Limit of a Globally Optimal Policy for \PN.}

This section establishes the existence and convergence of a globally optimal policy for \PIN. In particular, we address the following two questions:
\begin{itemize}
    \item [{\bf Q3)}] {\it Consider any sequence of symmetric (possibly randomized) Markovian globally optimal solutions for \PN\ that converges (through a subsequence) as $N\to \infty$ to a limiting policy. Is the limiting policy a (randomized) globally optimal solution for \PIN?}
    \item [{\bf Q4)}] {\it Does \PIN\ admit (possibly randomized) globally optimal solution that is symmetric and Markovian?} 
\end{itemize}

In the following, we address the above two questions by first establishing that the limit of any converging sequence of symmetric Markov globally optimal policies for \PN\ is a (possibly randomized) global optimal policy for \PIN. This, in particular, leads to the existence of a globally optimal policy for \PIN\ that is symmetric and Markovian. The proof of the following theorem is provided in Appendix \ref{App:the4}.

\begin{theorem}\label{the:4}
     Consider \PN\ and \PIN\ under Assumption \ref{assump:existence}, Assumption \ref{assump1},  Assumption \ref{assump:convex}, and Assumption \ref{assum:5-4}. Let $\mathbb{U}$ be compact. Then:
     \begin{itemize}
         \item [(i)] Any sequence of symmetric Markov globally optimal policies (with $\{\pmb{{\nu}_N}\}_N\subseteq \mathcal{M}^i$ for each DM) for \PN\ admits a subsequence that converges to a globally optimal policy for \PIN\ that is symmetric and Markovian.
         \item [(ii)] There exists a randomized globally optimal policy for \PIN\ that is Markovian and symmetric.
     \end{itemize}
\end{theorem}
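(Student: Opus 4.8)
\textbf{Proof proposal for Theorem \ref{the:4}.}

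The plan is to establish part (i) first, since part (ii) follows immediately by combining (i) with the existence result for \PN\ already furnished by Theorem \ref{the:3}(ii). For part (i), start from a sequence $\{\pmb{\nu_N}\}_N$ of symmetric Markov globally optimal policies for \PN, where each agent in the $N$-th team uses the common randomized Markov policy $\pmb{\nu_N}\in\mathcal{M}$. By Lemma \ref{lem:Borkar}, since $\mathbb{U}$ is compact, $\mathcal{M}$ is a compact metric space under Borkar's topology, so $\{\pmb{\nu_N}\}_N$ has a subsequence (not relabeled) converging to some $\pmb{\nu_\infty}\in\mathcal{M}$. The candidate globally optimal policy for \PIN\ is the symmetric decentralized policy $\underline{\pmb{\gamma}}=(\pmb{\nu_\infty},\pmb{\nu_\infty},\ldots)$, which is manifestly symmetric and Markovian. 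Two things remain: (a) identify the limit of $J_N(\pmb{\nu_N})$ with $J_\infty(\underline{\pmb{\gamma}})$ evaluated at the limit policy (an upper bound on the infinite-population value), and (b) show no infinite-population policy can do strictly better, i.e.\ a matching lower bound $\inf_{\underline{\pmb{\gamma}}\in\Gamma}J_\infty(\underline{\pmb{\gamma}})\ge \lim_N J_N(\pmb{\nu_N})$.

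For (a), the key is a propagation-of-chaos / law-of-large-numbers argument. Under the symmetric policy $\pmb{\nu_N}$ the state processes $\pmb{X^1},\ldots,\pmb{X^N}$ solving \eqref{eq:st-dy-N} are i.i.d.\ (decoupled dynamics, i.i.d.\ noises and initial conditions), so by Lemma \ref{lem:Borkar-cont} the common law $\pmb{\mu^i}(\cdot;\pmb{\nu_N})$ converges weakly to $\pmb{\mu}(\cdot;\pmb{\nu_\infty})$ on $\mathcal{C}([0,T];\mathbb{X})$. Combining this with the i.i.d.\ structure and a Glivenko--Cantelli-type argument, the empirical measures $\frac1N\sum_p\delta_{X^p_t}$ and $\frac1N\sum_p\delta_{U^p_t}$ (in a joint state-action sense, using the relaxed control representation) converge, for each $t$, in probability to the deterministic laws $\mathcal{L}(X_t;\pmb{\nu_\infty})$ and its action counterpart. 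Then, writing the integrand of \eqref{eq:JN} and using continuity of $\hat{c}$ (Assumption \ref{assum:5-4}), boundedness/uniform integrability from the affine-growth bound in Assumption \ref{assump:existence}(ii) (which gives uniform $L^2$ bounds on $\sup_t|X_t^i|$, hence a uniform integrability needed since $\hat c\ge 0$ but may be unbounded — here I would either truncate $\hat c$ or invoke a moment bound), and dominated/Fatou convergence, one obtains $\lim_N J_N(\pmb{\nu_N}) = \int_0^T \mathbb{E}[\hat{c}(X_t,U_t,\mathcal{L}(X_t),\mathcal{L}(U_t))]\,dt = J_\infty(\underline{\pmb{\gamma}})$, where the $\limsup$ in $J_\infty$ is in fact a genuine limit for this policy.

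For (b), the lower bound, the argument is the standard one for McKean--Vlasov/mean-field teams: fix any admissible $\underline{\pmb{\gamma}}=(\pmb{\gamma^1},\pmb{\gamma^2},\ldots)\in\Gamma$ for \PIN\ and, for each $N$, use its first $N$ coordinates as a (generally non-symmetric) policy for \PN; by Theorem \ref{the:2}(i) we may symmetrize without loss for the pre-limit optimization, but more directly, since $\pmb{\nu_N}$ is globally optimal for \PN, $J_N(\pmb{\nu_N})\le J_N(\pmb{\gamma^{1:N}})$ for every $N$, and by the same weak-convergence and propagation-of-chaos reasoning applied to the (now possibly asymmetric) truncated policy $\pmb{\gamma^{1:N}}$, one shows $\limsup_N J_N(\pmb{\gamma^{1:N}})\le J_\infty(\underline{\pmb{\gamma}})$ — here the $\limsup$ in the definition of $J_\infty$ is exactly what makes this inequality go through, and convexity (Assumption \ref{assump:convex}) plus Theorem \ref{the:2} can be used to reduce to the symmetric case where the chaos argument is cleanest. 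Taking $\limsup_N$ of $J_N(\pmb{\nu_N})\le J_N(\pmb{\gamma^{1:N}})$ and then infimizing over $\underline{\pmb{\gamma}}$ gives $\lim_N J_N(\pmb{\nu_N})\le \inf_{\underline{\pmb{\gamma}}}J_\infty(\underline{\pmb{\gamma}})$. Together with (a), which shows the limit policy attains $\lim_N J_N(\pmb{\nu_N})$, this proves $\underline{\pmb{\gamma}}=(\pmb{\nu_\infty},\ldots)$ is globally optimal for \PIN, and the subsequence convergence is exactly the content of part (i). Part (ii) then follows because Theorem \ref{the:3}(ii) guarantees such a sequence $\{\pmb{\nu_N}\}_N$ exists in the first place.

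\textbf{Main obstacle.} The delicate point is the uniform integrability needed to pass the nonnegative but possibly unbounded cost $\hat c$ through the limit, together with establishing propagation of chaos at the level of the joint state-action empirical measures when policies are relaxed (randomized) and, in the lower-bound step, not symmetric; controlling the action empirical measure requires care because actions are $\mathcal{P}(\mathbb{U})$-valued under relaxed controls, and one must work on the appropriate space of measure-valued processes and use the nondegeneracy Assumption \ref{assump:existence}(iii) (via Lemma \ref{lem:Borkar-cont}) to get continuity of the state law in the policy. I expect the bulk of the technical work, and the place where the continuous-time setting genuinely differs from the discrete-time antecedents \cite{sanjari2018optimal,sanjari2019optimal,SSYdefinetti2020}, to be in making this joint convergence rigorous on path space.
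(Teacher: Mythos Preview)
Your proposal follows the same overall architecture as the paper's proof: extract a convergent subsequence via compactness of $\mathcal{M}$ (Lemma \ref{lem:Borkar}), invoke Lemma \ref{lem:Borkar-cont} together with a law-of-large-numbers argument to control the joint state--action empirical measures under the symmetric policies, and close with a sandwich inequality. Part (ii) is indeed immediate from (i) and Theorem \ref{the:3}(ii), exactly as you say.

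There is, however, one genuine simplification in the paper that you miss, and it dissolves precisely what you flag as the ``main obstacle.'' You aim to prove the \emph{equality} $\lim_N J_N(\pmb{\nu_N}) = J_\infty(\pmb{\nu_\infty},\pmb{\nu_\infty},\ldots)$, which would indeed require uniform integrability of the possibly unbounded cost. The paper instead proves only the one-sided inequality $\limsup_N J_N(\pmb{\nu_N^\star}) \ge J_\infty(\underline{\pmb{\nu^\star}})$: it truncates $\hat c$ by $\min\{M,\hat c\}$, applies the generalized dominated convergence theorem \cite{serfozo1982convergence} to the bounded truncation, and then sends $M\to\infty$ by monotone convergence (this direction needs no UI since $\hat c\ge 0$). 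That single inequality suffices, because the other side of the sandwich is the trivial exchange $\limsup_N \inf_{\pmb{\gamma}} J_N \le \inf_{\pmb{\gamma}} \limsup_N J_N = \inf_{\pmb{\gamma}} J_\infty(\pmb{\gamma})$---which is exactly your step (b) once you strip away the propagation-of-chaos claim for asymmetric policies. As you yourself note, $\limsup_N J_N(\pmb{\gamma^{1:N}}) = J_\infty(\underline{\pmb{\gamma}})$ is the \emph{definition} of $J_\infty$, so no chaos argument for non-symmetric policies is needed anywhere. In short: your plan is correct but works harder than necessary; the paper sidesteps both obstacles you identify (uniform integrability, and asymmetric-policy empirical-measure convergence) by settling for the one-sided bound and the definitional exchange.
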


Theorem \ref{the:4} establishes both convergence and existence results for \PN\ and \PIN. We accomplish this by utilizing a symmetric Markovian globally optimal solution of \PN\ and demonstrating its convergence to an optimal solution of the corresponding MF team, \PIN. The resulting limit policy is Markovian and symmetric, thus ensuring the existence of a globally optimal solution for \PIN\ that is also Markovian and symmetric. As a result of symmetry and independence in DMs' optimal policies and the induced states, we can represent the infinite population decentralized team problem as a single representative DM control problem, and hence,  the corresponding HJB equations characterize a symmetric Markovian optimal policy. 

We shall now explain the proof of Theorem \ref{the:4}. First, we demonstrate that the empirical measure of joint actions and states under a symmetric Markovian globally optimal solution for \PN\ converges weakly. This convergence is facilitated by the symmetry of a globally optimal solution for \PN, as established in Theorem \ref{the:3}, along with the independence of random variables. Next, by utilizing the generalized dominated convergence theorem in \cite{serfozo1982convergence} and leveraging the convergence of the empirical measure of joint actions and states, we demonstrate the continuity of the expected cost function. This continuity property arises since a sequence of a globally optimal solution for \PN\ converges to a limiting policy. With the continuity of the expected cost function established in part (i), proof of part (ii) follows immediately since $\mathcal{M}^i$ is a compact space, as demonstrated in Lemma \ref{lem:Borkar}, which ensures the existence of a globally optimal solution for \PIN.

{Theorem \ref{the:4} establishes the existence of a globally optimal solution for \PIN\ that is Markovian and symmetric. This structural result leads to a computationally tractable approach thanks to symmetry: following symmetry and independence, we can represent the infinite population MF problem as a single representative DM control problem by replacing the empirical measure of i.i.d. states and actions as their limits $\mu_{X_t^{\sf R}}=\mathcal{L}(X_t^{\sf R})$ and $\mu_{U_t^{\sf R}}=\mathcal{L}(U_t^{\sf R})$. As a result, by fixing  $\mu_{X_t^{\sf R}}\in \mathcal{P}(\mathbb{X})$ and $\mu_{U_t^{\sf R}}\in \mathcal{P}(\mathbb{U})$ for all $t\in [0,T]$, we can compute a symmetric Markovian optimal policy for \PIN\ provided that the corresponding HJB equations or master equations for the representative agent with consistency conditions $\mu_{X_t^{\sf R}}=\mathcal{L}(X_t^{\sf R})$ and $\mu_{U_t^{\sf R}}=\mathcal{L}(U_t^{\sf R})$ admit a solution. We do not provide an explicit analysis of this as further technical regularity conditions are required to guarantee the existence of a solution to the associated infinite dimensional optimality equation.}


\section{Exchangeable Stochastic Teams with Decoupled Dynamics: Non-Convex Case and Optimality and Near Optimality of Symmetric Policies}

In this section, we consider the model \eqref{eq:st-dy-N} and relax the convexity condition in Assumption \ref{assump:convex}. To this end, we allow for randomized policies; however, in contrast to the convex case where we considered Markovian randomized policies, we introduce different sets of randomized policies, known as \emph{wide-sense admissible policies} \cite{arapostathis2012ergodic,kushner2012weak} and their decentralized variation \cite{pradhan2023controlled}: To this end, we allow each DM$^i$ for $i\in \mathcal{N}$ to have access to the information generated by $X_{0}^i,W_{[0,t]}^i$ at time $t$ for $t\in \mathbb{T}$.  

\subsection{Sets of Randomized Policies}\label{sec:set-Ran}
{For each DM$^{i}$ ($i\in \mathcal{N}$), we re-define $\Gamma^i$ as the set of all wide-sense admissible control policies so that the random measure-valued variables $\pmb{\gamma^{i}}(\omega)\in  \mathcal{P}([0,T]\times \mathbb{U})$ satisfy the following two conditions:
\begin{itemize}
    \item [(i)] Marginals on $[0,T]$ are fixed to be the Lebesgue measure.
    \item [(ii)] $\gamma^{i}_{t}:\omega\mapsto \pmb{\gamma^{i}}(\cdot|t)(\omega)$ is independent of $W^{i}_{s}-W^{i}_{t}$ for $s>t$ and for any $t\in \mathbb{T}$, and $W^{j}_{[0,T]}$ for any $j\in \mathcal{N}$, $j\not=i$. 
    \end{itemize}

We equip the random measures $\Gamma^i\subseteq\mathcal{P}([0,T]\times \mathbb{U})$ with the Young topology; that is, $\pmb{\gamma^{i}_n}(\omega)$ converges to $\pmb{\gamma^{i}}(\omega)$ if the random measure converges weakly}. 

We consider the state process $\pmb{X^i}$ as the solution of diffusion equation \eqref{eq:st-dy-N} which is a $\mathcal{C}([0, T];\mathbb{X})$-valued process
 (under sup-norm) induced by $\pmb{\gamma^i}(d{U}^i,dt)(\omega)$ and define the space of probability measures on these random variables.  Following (ii), for any $x_{0}\in \mathbb{X}$, $t\in \mathbb{T}$, and twice continuously differentiable functions $f:\mathbb{X}\to \mathbb{R}$, the following martingale equation holds: 
    \begin{align}\label{eq:martingale-1}
    \mathbb{E}\left[f(X_{t}^{i})-f(X_{0}^i)-\int_{0}^{t}\mathcal{A}^{{U^i}}f(X_{s}^{i})\pmb{\gamma^{i}}(d{U^{i}},ds)\bigg|X_{0}^{i}=x_0\right] = 0,
        \end{align}
    where $\mathcal{A}^{{U^i}}$ is the generator of the transition semigroup of $\pmb{X^{i}}$, i.e.,  
\begin{align*}
    \mathcal{A}^{{U^i}}f(X_s^i)&:= {\sf Tr}\left(a_{s}(X_{s}^i)\nabla^2 f(X_{s}^{i})\right)+b_{s}(X_{s}^{i},{U^i})\cdot \nabla f(X_{s}^{i}),
    \end{align*}
where $a_{s}:=\frac{1}{2}\sigma\sigma^{T}$. We denote the set of all randomized policies for DMs within the team problem on $\prod_{i=1}^{N}\Gamma^{i}$ by $L^{N}:=\mathcal{P}(\prod_{i=1}^{N}\Gamma^{i})$ which is the collection of all Borel probability measures on $\prod_{i=1}^{N}\Gamma^{i}$, where Borel $\sigma$-field $\mathcal{B}(\Gamma^i)$ is induced by the Young topology and we equip $\prod_{i=1}^{N}\Gamma^{i}$ with the product topology. 

In the following, we introduce different sets of randomized policies by allowing common and individual randomness among DMs. Define the set of randomized policies with individual and common randomness by 
\begin{flalign*}
\LCON:=\bigg\{P_{\pi} \in L^{N}&\bigg{|}\text{for all}~A^{i} \in \mathcal{B}(\Gamma^{i}):P_{\pi}(\pmb{\gamma^{1}} \in A^{1},\dots,\pmb{\gamma^{N}} \in A^{N}) \nonumber\\
&=\int_{z\in [0,1]}\prod_{i=1}^{N}P_{\pi}^i(\pmb{\gamma^{i}}\in A^{i}|z)\eta(dz), ~~~~\eta \in \mathcal{P}([0, 1])\bigg\},
\end{flalign*} 
where $\eta$ is the distribution of common randomness (independent from the intrinsic exogenous system random variables). In the above, for every fixed $z$, $P_{\pi}^i\in \mathcal{P}(\Gamma^{i})$ corresponds to an independent randomized policy for each DM$^{i}$ ($i\in \mathcal{N}$).  Following \cite[Theorem A.1]{SSYdefinetti2020}, we note that $\LCON$ and $L^{N}$ are identical. 
{We next recall the definition of {\it exchangeability} for random variables.
\begin{definition}
Random variables $y^{1:m}$ defined on a common probability space are $m$-\it{exchangeable} if for any permutation $\sigma$ of the set $\{1,\dots,m\}$, $\mathcal{L}\left(y^{\sigma(1):\sigma(m)}\right)=\mathcal{L}\left(y^{1:m}\right)$, where $\mathcal{L}(\cdot)$ denotes the (joint) law of random variables. Random variables $(y^{1},y^{2},\dots)$ are {\it infinitely-exchangeable} if finite distributions of $(y^{1},y^{2},\dots)$ and $(y^{\sigma(1)},y^{\sigma(2)},\dots)$ are identical for any finite permutation (affecting only finitely many elements) of $\mathbb{N}$. 
\end{definition}}

The set of exchangeable randomized policies $\LEXN\subseteq L^{N}$  is given by
\begin{flalign}
\LEXN:=\bigg\{&P_{\pi} \in L^{N}\bigg{|} \forall A^{i} \in \mathcal{B}(\Gamma^{i})~\text{and}~\forall\sigma \in S_{N}:\nonumber\\
&P_{\pi}(\pmb{\gamma^{1}} \in A^{1},\dots,\pmb{\gamma^{N}}\in A^{N})=P_{\pi}(\pmb{\gamma}^{\sigma(1)} \in A^{1},\ldots,\pmb{\gamma}^{\sigma(N)}\in A^{N})\bigg\}\label{eq:LEXN},
\end{flalign}
where $S_{N}$ is the set of all permutations of $\{1,\ldots,N\}$. We note that $\LEXN$ is a subset of $L^{N}$.  {Let  $\LCOSN$ be the set of all symmetric (identical) randomized policies induced by common and individual randomness:
\begin{flalign*}
\LCOSN:=\bigg\{P_{\pi} \in L^{N}&\bigg{|}\forall A^{i} \in \mathcal{B}(\Gamma^{i}):P_{\pi}(\pmb{\gamma^{1}} \in A^{1},\dots,\pmb{\gamma^{N}}\in A^{N}) \nonumber\\
&=\int_{z\in [0,1]}\prod_{i=1}^{N}\widetilde{P}_{\pi}(\pmb{\gamma^{i}} \in A^{i}|z)\eta(dz), ~~~~\eta \in \mathcal{P}([0, 1])\bigg\},
\end{flalign*}
where for all $i \in \mathcal{N}$, conditioned on $z$, $\widetilde{P}_{\pi} \in \mathcal{P}(\Gamma^{i})$ corresponds to an independent randomized policy for each DM$^{i}$, which is symmetric among DMs. We additionally note that $\LCOSN \subset \LEXN$. 

We also define the following set of symmetric randomized policies with private independent randomness as
\begin{flalign*}
\LPRS^{N}:=\bigg\{&P_{\pi} \in L^N\bigg{|}\text{for all}~A^{i} \in \mathcal{B}(\Gamma^{i}):\\
&P_{\pi}(\pmb{\gamma^{1}} \in A^{1},\ldots, \pmb{\gamma^{N}} \in A^{N})=\prod_{i=1}^{N}\widetilde{P}_{\pi}(\pmb{\gamma^{i}}\in A^{i}),~\text{for}~\widetilde{P}_{\pi}\in \mathcal{P}(\Gamma^{i})\bigg\}\nonumber.
\end{flalign*}

We note that $\LPRS^{N}$ is not a convex set; however, $\LPRS^N$ contains the set of extreme points of the convex set $\LCOSN$; see e.g., \cite{saldi2022geometry,YukselSaldiSICON17}. This fact will be used in our analysis.



When the number of DMs is countably infinite, using the Ionescu-Tulcea extension theorem, we define the corresponding sets of randomized policies $L, \LCO, \LEX$, and $\LCOS$  by iteratively adding new coordinates for our probability measures (see e.g., \cite{InfiniteDimensionalAnalysis,HernandezLermaMCP}). The set of randomized policies $L$ on the infinite product Borel spaces $\Gamma:=\prod_{i\in \mathbb{N}}\Gamma^{i}$ is defined as
$L:=\mathcal{P}(\Gamma)$.
Let the set of all randomized policies with common and independent randomness be given by
\begin{flalign*}
\LCO:=\bigg\{P_{\pi} \in L&\bigg{|}\text{for all}~A^{i} \in \mathcal{B}(\Gamma^{i}): P_{\pi}(\pmb{\gamma^{1}} \in A^{1},\pmb{\gamma^{2}} \in A^{2},\dots)\nonumber\\
&=\int_{z\in [0,1]}\prod_{i\in \mathbb{N}}P_{\pi}^{i}(\pmb{\gamma^{i}}\in A^{i}|z)\eta(dz), ~~~~\eta \in \mathcal{P}([0, 1])\bigg\}.
\end{flalign*}
Let the set of all infinitely exchangeable randomized policies $\LEX$ be given by
\begin{flalign*}
\LEX:=\bigg\{&P_{\pi} \in L\bigg{|}\text{for all}~A^{i} \in \mathcal{B}(\Gamma^{i})~\text{and for all $N\in\mathbb{N}$, and for all $\sigma \in S_{N}$:} \nonumber\\
&P_{\pi}(\pmb{\gamma^{1}} \in A^{1},\dots,\pmb{\gamma^{N}}\in A^{N})=P_{\pi}(\pmb{\gamma}^{\sigma(1)} \in A^{1},\dots,\pmb{\gamma}^{\sigma(N)}\in A^{N})\bigg\}.
\end{flalign*}
Let the symmetric policies with common and independent randomness be given by
\begin{flalign*}
\LCOS:=\bigg\{P_{\pi} \in L&\bigg{|}\text{for all}~A^{i} \in \mathcal{B}(\Gamma^{i}): P_{\pi}(\pmb{\gamma^{1}} \in A^{1},\pmb{\gamma^{2}} \in A^{2},\dots)\nonumber\\
&=\int_{z\in [0,1]}\prod_{i\in \mathbb{N}}\widetilde{P}_{\pi}(\pmb{\gamma^{i}}\in A^{i}|z)\eta(dz), ~~~~\eta \in \mathcal{P}([0, 1])\bigg\}.
\end{flalign*}

Also, define the set of symmetric randomized policies with private independent randomness as
\begin{flalign*}
\LPRS:=\bigg\{&P_{\pi} \in L\bigg{|}\text{for all}~A^{i} \in \mathcal{B}(\Gamma^{i}):\\
&P_{\pi}(\pmb{\gamma^{1}} \in A^{1},\pmb{\gamma^{2}} \in A^{2},\dots)=\prod_{i\in \mathbb{N}}\widetilde{P}_{\pi}(\pmb{\gamma^{i}}\in A^{i}),~\text{for}~\widetilde{P}_{\pi}\in \mathcal{P}(\Gamma^{i})\bigg\}\nonumber.
\end{flalign*}

In the following, we first present two lemmas on convexity, compactness, and relationships between the preceding set of exchangeable randomized policies. We use the following lemmas for our main results in Theorem \ref{the:5} and Theorem \ref{the:6}. 

\begin{lemma}\label{lem:0}
Consider  $\LEXN, \LCOSN, \LEX$, and $\LCOS$. Then, the following statements hold:
\begin{itemize}
    \item [(i)] Any infinitely-exchangeable randomized policy $P_{\pi}$ in $\LEX$ also belongs to the set of randomized policies $\LCOS$, i.e., $\LCOS=\LEX$. 
    \item [(ii)] $\LCOSN \subseteq \LEXN$; however, in general $\LCOSN \not= \LEXN$.
\end{itemize}
\end{lemma}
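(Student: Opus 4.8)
\textbf{Proof proposal for Lemma \ref{lem:0}.}

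The plan is to treat the two parts separately, with part (i) resting on de Finetti's theorem and part (ii) on a small explicit counterexample. For part (i), the inclusion $\LCOS \subseteq \LEX$ is essentially immediate: if $P_\pi$ has the form $P_\pi(\cdot) = \int_{[0,1]} \prod_{i} \widetilde P_\pi(\pmb{\gamma^i}\in\cdot\,|z)\,\eta(dz)$, then conditioned on $z$ the coordinates $\pmb{\gamma^i}$ are i.i.d., hence exchangeable, and mixing an exchangeable family over $z$ preserves exchangeability (the permutation acts inside the product, which is symmetric). So the content is the reverse inclusion $\LEX \subseteq \LCOS$. Here I would invoke the de Finetti representation theorem for infinitely-exchangeable sequences of random variables taking values in the Borel space $\Gamma^i = \Gamma^1$ (each $\Gamma^i$ is a copy of the same standard Borel space, since the $\pmb{\gamma^i}$ are random measures in $\mathcal{P}([0,T]\times\mathbb{U})$ under the Young topology, which is Polish): any infinitely-exchangeable $P_\pi \in \LEX$ can be written as a mixture $P_\pi(\cdot) = \int_{\mathcal{P}(\Gamma^1)} \prod_{i\in\mathbb{N}} Q(\pmb{\gamma^i}\in\cdot)\,\Xi(dQ)$ for some probability measure $\Xi$ on $\mathcal{P}(\Gamma^1)$ (the mixing measure / directing random measure). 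The only remaining step is to re-index this mixture over $[0,1]$ rather than over $\mathcal{P}(\Gamma^1)$: since $\mathcal{P}(\Gamma^1)$ is itself a standard Borel space and is uncountable (or can be handled with the usual Borel-isomorphism argument), there is a Borel isomorphism $\varphi:[0,1]\to\mathcal{P}(\Gamma^1)$ (modulo a null set / after adjusting for atoms), and setting $\eta := \Xi\circ\varphi$ and $\widetilde P_\pi(\cdot\,|z) := \varphi(z)(\cdot)$ exhibits $P_\pi \in \LCOS$. This gives $\LEX = \LCOS$.

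For part (ii), the inclusion $\LCOSN \subseteq \LEXN$ is again straightforward: any symmetric conditionally-i.i.d. policy, conditioned on the common randomness $z$, has i.i.d. (hence exchangeable) coordinates, and integrating against $\eta$ preserves the symmetry of the joint law under $S_N$. The substantive claim is that the inclusion can be strict for finite $N$. The standard obstruction is that de Finetti's theorem fails for finite exchangeable sequences: there exist $N$-exchangeable laws that are not mixtures of i.i.d. laws. I would exhibit a concrete counterexample with, say, $N=2$ and a two-point action set embedded in $\Gamma^i$ (e.g., two fixed deterministic policies $\pmb{\gamma}_a, \pmb{\gamma}_b \in \Gamma^i$, so effectively the policy of each DM is a $\{a,b\}$-valued random variable): take $P_\pi$ to be the uniform distribution on the two "anti-correlated" configurations $(\pmb{\gamma}_a,\pmb{\gamma}_b)$ and $(\pmb{\gamma}_b,\pmb{\gamma}_a)$, each with probability $1/2$. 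This is exchangeable (swapping the two coordinates maps the configuration set to itself) but cannot be written as $\int_{[0,1]} \widetilde P_\pi(\cdot|z)\otimes\widetilde P_\pi(\cdot|z)\,\eta(dz)$: any such mixture has $P_\pi(\pmb{\gamma^1}=\pmb{\gamma^2}) = \int (\widetilde P_\pi(\{a\}|z)^2 + \widetilde P_\pi(\{b\}|z)^2)\,\eta(dz) \ge \int (2\widetilde P_\pi(\{a\}|z) - 1 + \tfrac12)\cdot(\text{nonneg})\ge 0$, and more simply $\ge (\int \widetilde P_\pi(\{a\}|z)\eta(dz))^2 + (\int\widetilde P_\pi(\{b\}|z)\eta(dz))^2 \ge 1/2$ by Jensen, whereas our $P_\pi$ has $P_\pi(\pmb{\gamma^1}=\pmb{\gamma^2})=0$. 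This contradiction shows $P_\pi \in \LEXN \setminus \LCOSN$.

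The main obstacle is the rigorous invocation of de Finetti in part (i): one must confirm that $\Gamma^i$, equipped with the Young topology on random measures over $[0,T]\times\mathbb{U}$, is a standard Borel space (Polish) so that the Hewitt–Savage / de Finetti representation applies, and then that the mixing measure on $\mathcal{P}(\Gamma^1)$ can be transported to $\mathcal{P}([0,1])$ in the form demanded by the definition of $\LCOS$ — this is where the "$\eta\in\mathcal{P}([0,1])$" normalization in the definitions must be matched carefully, using that all standard Borel spaces of the same cardinality are Borel-isomorphic. The compactness-type ingredients needed here (Polishness of the Young topology, analogous to the discussion surrounding $\LWS^i$ and Lemma \ref{lem:compactness} in the excerpt, and to \cite[Theorem A.1, Theorem A.2]{SSYdefinetti2020}) should be quoted rather than reproved. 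Everything else is bookkeeping.
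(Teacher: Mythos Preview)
Your proposal is correct and follows the same approach as the paper: part (i) is de Finetti's representation theorem applied on the standard Borel space $\Gamma^i$ (the paper cites \cite[Theorem 1]{SSYdefinetti2020} and \cite[Theorem 1.1]{kallenberg2006probabilistic} for exactly this), and part (ii) is the well-known failure of de Finetti for finite exchangeable sequences (the paper cites \cite[Example 1.18]{schervish2012theory}, \cite{diaconis1980finite}, \cite{aldous2006ecole}). Your explicit anti-correlated two-point counterexample and the Borel-isomorphism step to transport the directing measure onto $[0,1]$ are more detailed than the paper's citation-only proof, but the underlying argument is the same; one small caution is that your reference to ``Lemma \ref{lem:compactness}'' does not match anything in the final paper.
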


 \begin{proof}
 Part (i) is from \cite[Theorem 1]{SSYdefinetti2020} utilizing a de Finetti representation theorem in \cite[Theorem 1.1]{kallenberg2006probabilistic}. Part (ii) holds since finitely exchangeable random variables are not necessarily conditionally i.i.d.; see e.g., \cite[Example 1.18]{schervish2012theory}, and \cite{diaconis1980finite,aldous2006ecole}.
\end{proof}

The proof of the following lemma is provided in Appendix \ref{app:lem:2}.

\begin{lemma}\label{lem:2}
Consider the sets of randomized policies $\LEXN$ and $\LEX$. Let Assumption \ref{assump:existence} hold and $b_t(\cdot,\cdot)$ and $\sigma_t(\cdot)$ in \eqref{eq:st-dy-N} be uniformly bounded for all $t\in \mathbb{T}$.
    \begin{itemize}
        \item [(i)] Let $\mathbb{U}$ be compact. Then, $\LEXN$ and $\LEX$ are convex and compact.
        \item [(ii)]  The mapping $\Theta: H \to \mathcal{P}(\mathcal{C}([0,T],\mathbb{X}))$ is continuous for $H=\LEXN$ or $\LEX$, where $\Theta$ maps any policy to the induced probability distribution on path space $\mathcal{C}([0,T],\mathbb{X})$ for the state process.
        
    \end{itemize}
\end{lemma}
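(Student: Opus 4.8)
\textbf{Proof proposal for Lemma \ref{lem:2}.}

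The plan is to establish (i) convexity, then compactness, and finally (ii) continuity of $\Theta$, handling $\LEXN$ and $\LEX$ in parallel with the infinite case reduced to the finite case by a projective-limit argument. For \emph{convexity} of $\LEXN$: if $P_\pi^1, P_\pi^2 \in \LEXN$ and $\lambda\in[0,1]$, then $\lambda P_\pi^1 + (1-\lambda)P_\pi^2$ still satisfies the permutation-invariance identity in \eqref{eq:LEXN} because that identity is linear (affine) in the measure; so $\LEXN$ is convex. The same linearity argument gives convexity of $\LEX$ (the defining condition is a family of affine constraints indexed by $N$, $\sigma\in S_N$, and cylinder sets). For \emph{compactness} of $\LEXN$: first argue that each $\Gamma^i$, the space of wide-sense admissible policies viewed inside $\mathcal{P}([0,T]\times\mathbb{U})$ with the Young topology and marginal on $[0,T]$ fixed to Lebesgue measure, is compact when $\mathbb{U}$ is compact — this is the standard tightness/Prokhorov argument for Young measures on a compact target, the fixed Lebesgue marginal providing automatic tightness in the time variable. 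Hence $\prod_{i=1}^N\Gamma^i$ is compact (Tychonoff), so $L^N=\mathcal{P}(\prod_{i=1}^N\Gamma^i)$ is compact in the weak topology (Prokhorov, since a compact metric space is Polish). Then $\LEXN$ is a \emph{closed} subset of $L^N$: the map $P_\pi\mapsto P_\pi(\pmb\gamma^1\in A^1,\dots,\pmb\gamma^N\in A^N) - P_\pi(\pmb\gamma^{\sigma(1)}\in A^1,\dots)$ is continuous (for $A^i$ continuity sets) and $\LEXN$ is an intersection of preimages of $\{0\}$; closed subset of compact is compact. One must be a little careful: weak-convergence continuity of $P_\pi\mapsto P_\pi(B)$ holds for $B$ a $P_\pi$-continuity set, so pass through bounded continuous test functions on $\prod\Gamma^i$ and use that these separate exchangeable measures — equivalently, note that the symmetrization operator $P_\pi\mapsto \frac{1}{N!}\sum_{\sigma}P_\pi\circ\sigma^{-1}$ is continuous and $\LEXN$ is its fixed-point set, which is closed.

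For the \emph{infinite} case $\LEX$: by Lemma \ref{lem:0}(i), $\LEX=\LCOS$, and an element is determined by the directing random measure $\eta$ together with the common conditional law $\widetilde P_\pi\in\mathcal{P}(\Gamma^1)$; alternatively and more cleanly, realize $\LEX$ as the projective limit of the finite-dimensional exchangeable marginal families. Concretely, $\Gamma=\prod_{i\in\mathbb{N}}\Gamma^i$ with the product topology is compact (Tychonoff, $\Gamma^i$ compact metric), hence Polish, so $L=\mathcal{P}(\Gamma)$ is compact; and $\LEX$ is the set of measures whose finite-dimensional marginals all lie in the corresponding $\LEXN$-type sets, an intersection of closed sets, hence closed, hence compact. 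Convexity is again inherited from the affine constraints. This disposes of (i).

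For (ii), continuity of $\Theta$: fix $H=\LEXN$ (the infinite case follows by the same cylinder-set reduction, since the path law of the first $N$ coordinates depends only on the policy marginal on $\prod_{i=1}^N\Gamma^i$, and the cost/path-space structure is coordinatewise). Suppose $P_\pi^n\to P_\pi$ weakly in $\LEXN$. We want $\Theta(P_\pi^n)\to\Theta(P_\pi)$ weakly in $\mathcal{P}(\mathcal{C}([0,T],\mathbb{X})^N)$ — or, by exchangeability, it suffices to track one representative coordinate's law plus joint laws of finitely many coordinates. The key structural input is the martingale characterization \eqref{eq:martingale-1}: for a fixed realization $\pmb\gamma^i(\omega)$, the law of $\pmb X^i$ is the unique solution of the controlled martingale problem with generator $\mathcal{A}^{U^i}$ integrated against $\pmb\gamma^i(dU^i,ds)$, and under Assumption \ref{assump:existence} (with $b,\sigma$ bounded) this solution depends \emph{continuously} (in the weak topology on $\mathcal{C}([0,T],\mathbb{X})$) on $\pmb\gamma^i$ in the Young topology — this is exactly the content of the SDE-stability results underlying Lemma \ref{lem:Borkar-cont} and the references \cite{borkar1989topology, kushner2012weak, pradhan2024continuity}. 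Boundedness of $b,\sigma$ gives uniform moment bounds hence tightness of $\{\Theta(P_\pi^n)\}$ on path space (Aldous/Kolmogorov criterion), so it remains to identify limit points: take a weakly convergent subsequence, use the continuous dependence of the solution of the martingale problem on the policy together with $P_\pi^n\to P_\pi$ to pass to the limit inside \eqref{eq:martingale-1} (integrating the bounded continuous functionals $f(X_t)-f(X_0)-\int_0^t\mathcal{A}^U f\,\pmb\gamma(dU,ds)$ against $P_\pi^n$ and over the driving randomness), concluding that every limit point solves the martingale problem driven by $P_\pi$, which by uniqueness equals $\Theta(P_\pi)$. Since every subsequence has a further subsequence converging to the same limit, the whole sequence converges, giving continuity.

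\textbf{Main obstacle.} The delicate point is (ii): passing the limit through the nonlinear map ``policy $\mapsto$ path law,'' because the policy enters the drift inside the stochastic integral. The clean route is to avoid Girsanov-type arguments and instead work entirely with the martingale problem \eqref{eq:martingale-1}: establish (a) tightness of the induced path laws from the affine-growth/boundedness bounds, and (b) that the functional $(\omega,\pmb\gamma)\mapsto \big[f(X_t)-f(X_0)-\int_0^t\mathcal{A}^{U}f\,\pmb\gamma(dU,ds)\big]$ is, after taking expectations in the exogenous noise, continuous in $\pmb\gamma$ for the Young topology (here one needs $b$ jointly continuous and bounded, and $f\in C^2_b$, plus the nondegeneracy in Assumption \ref{assump:existence}(iii) to get uniqueness of the martingale problem so that limit points are \emph{identified}, not merely extracted). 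The exchangeability constraint itself contributes nothing hard here — it is preserved under weak limits and under $\Theta$ by symmetry of the dynamics — so the real work is the standard-but-technical martingale-problem stability estimate, which is precisely why the hypotheses strengthen Assumption \ref{assump:existence} with uniform boundedness of $b_t,\sigma_t$.
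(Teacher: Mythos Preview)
Your proposal is correct and follows essentially the same route as the paper: for (i), compactness of each $\Gamma^i$ via fixed-marginal tightness plus Prokhorov, then closedness of the exchangeable subset under weak limits (the paper uses a countable measure-determining family of test functions where you invoke the symmetrization-operator fixed-point description, but these are equivalent), and convexity from linearity of the permutation constraint; for (ii), tightness of the induced path laws from the boundedness of $b_t,\sigma_t$, Skorohod representation, passage to the limit inside the martingale equation \eqref{eq:martingale-1} via generalized dominated convergence, and identification of the limit by uniqueness of the martingale problem. Your framing of the ``main obstacle'' and the reason for the uniform-boundedness hypothesis matches the paper's use of these ingredients exactly.
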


The continuity of the probability measures of the path space of states follows from the martingale equation in \eqref{eq:martingale-1} and the generalized dominated convergence theorem since the dynamics in \eqref{eq:st-dy-N} are decoupled.

\subsection{Existence of Randomized Optimal Solutions for \PN\ and \PIN}

For any randomized policies $P_{\pi}^{N}\in L^{N}$ with the induced probability measure on states ${\mu^{1:N}}$, the expected cost for \PN\ is given by
    \begin{align}\label{eq:expcost}
    J_{N}^{\pi}(P_{\pi}^{N})=\int \left(c_{N}(\pmb{X}^{1:N},\pmb{\gamma^{1:N}})\prod_{i=1}^{N}\mu^i(d\pmb{X^{i}};\pmb{\gamma^{i}})\right) P_{\pi}^{N}(d\pmb{\gamma^{1:N}}),
\end{align}
where 
\begin{align}
   c_{N}(\pmb{X}^{1:N},\pmb{\gamma^{1:N}}):= \int \int_{0}^{T} c(X_{t}^{1:N},U_{t}^{1:N})\prod_{i=1}^{N}\pmb{\gamma^{i}}(d{U^{i}},dt),
\end{align}
and $c:\prod_{i=1}^{N}\mathbb{X}\times \mathbb{U}\to \mathbb{R}_{+}$ satisfies Assumption \ref{assump1}. We can further define the expected cost when the cost is of the form \eqref{eq:cost-mf} (in this case we denote $c_{N}$ in \eqref{eq:expcost} by $\widehat{c}_{N}$). Similarly, for any randomized policies $P_{\pi}\in L$, the expected cost for \PIN\ is given by 
\begin{align}
    J^{\pi}(P_{\pi})=\limsup_{N\to \infty}J_{N}^{\pi}(P_{\pi,N}),
\end{align} where $P_{\pi,N}$ is the marginal of the $P_{\pi}\in L$ to the first $N$ components. 

In the following theorem, we establish the existence of a randomized globally optimal policy for \PN\ that is exchangeable. The proof of the following theorem is provided in Appendix \ref{app:the:5}.

\begin{theorem}\label{the:5}
     Consider \PN\ under   Assumption \ref{assump:existence}, Assumption \ref{assump1}, and Assumption \ref{assum:5-4}. Let $\mathbb{U}$ be compact and $b_t(\cdot,\cdot)$ and $\sigma_t(\cdot)$ in \eqref{eq:st-dy-N} are uniformly bounded.  Then, there exists an exchangeable randomized globally optimal policy ${P_{\pi}^{N\star}}\in \LEXN$ for \PN\ among all randomized policies $L^{N}$.
\end{theorem}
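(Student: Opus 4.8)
\textbf{Proof proposal for Theorem \ref{the:5}.}

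The plan is to realize the infimum of $J_N^\pi$ over $L^N$ as a minimum over the smaller set $\LEXN$, and then invoke compactness and lower semicontinuity on $\LEXN$. First I would observe, using the symmetry of the cost (Assumption \ref{assump1}) and the symmetry of the dynamics \eqref{eq:st-dy-N} (the Wiener processes $W^i$ and the initial conditions $X_0^i$ being i.i.d.), that the expected cost functional is invariant under simultaneous permutation of the policy coordinates: for any $P_\pi^N\in L^N$ and any $\sigma\in S_N$, the permuted policy $P_{\pi,\sigma}^N$ (obtained by relabeling the coordinates) satisfies $J_N^\pi(P_{\pi,\sigma}^N)=J_N^\pi(P_\pi^N)$. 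Averaging over all $\sigma\in S_N$ produces an exchangeable randomized policy $\bar P_\pi^N:=\frac1{N!}\sum_{\sigma\in S_N}P_{\pi,\sigma}^N\in\LEXN$ whose cost equals the average of the $J_N^\pi(P_{\pi,\sigma}^N)$, hence equals $J_N^\pi(P_\pi^N)$. (Here one must check that $J_N^\pi$ is affine in $P_\pi^N\in L^N$, which is immediate from the integral representation \eqref{eq:expcost}: $c_N$ and the state laws $\mu^i(\cdot;\pmb\gamma^i)$ depend only on the policy realizations, so $J_N^\pi$ is linear in the outer measure $P_\pi^N$.) This gives
\begin{align*}
\inf_{P_\pi^N\in L^N}J_N^\pi(P_\pi^N)=\inf_{P_\pi^N\in\LEXN}J_N^\pi(P_\pi^N),
\end{align*}
so it suffices to attain the infimum on $\LEXN$.

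Next I would use Lemma \ref{lem:2}(i) to conclude that $\LEXN$ is compact (under the topology it inherits as a subset of $\mathcal P(\prod_{i=1}^N\Gamma^i)$, with each $\Gamma^i$ carrying the Young topology). It then remains to show $P_\pi^N\mapsto J_N^\pi(P_\pi^N)$ is lower semicontinuous on $\LEXN$; in fact I expect to get continuity. The argument: if $P_{\pi,n}^N\to P_\pi^N$ in $\LEXN$, then by Lemma \ref{lem:2}(ii) the map $\Theta$ sending a policy to the induced law on path space $\mathcal C([0,T],\mathbb X)$ for the state process is continuous, so the joint law of $(\pmb X^{1:N},\pmb\gamma^{1:N})$ converges weakly. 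Since $\mathbb U$ is compact and $c$ (equivalently $\hat c$) is continuous by Assumption \ref{assum:5-4}, the map $(\pmb X^{1:N},\pmb\gamma^{1:N})\mapsto c_N(\pmb X^{1:N},\pmb\gamma^{1:N})=\int\int_0^T c(X_t^{1:N},U_t^{1:N})\prod_i\pmb\gamma^i(dU^i,dt)$ is continuous and bounded on the relevant (compact-in-$\mathbb U$) domain, so $J_N^\pi$ — which is the integral of $c_N$ against this joint law — converges. One technical point worth spelling out is the joint continuity/boundedness of $c_N$ as a functional of the pair (state path, relaxed control) in the product of sup-norm and Young topologies; boundedness of $b_t,\sigma_t$ is used here (via Lemma \ref{lem:2}) to keep the state-path laws tight and the change-of-variables under control, and continuity of $c$ together with compactness of $[0,T]\times\mathbb U$ handles the inner integral. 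Applying the Weierstrass extreme value theorem on the compact set $\LEXN$ with the (lower semi)continuous functional $J_N^\pi$ yields a minimizer $P_\pi^{N\star}\in\LEXN$, which by the first step is globally optimal over all of $L^N$.

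The main obstacle I anticipate is the continuity/lower-semicontinuity step: one must be careful that weak convergence of the joint law of policies and induced state paths genuinely passes through the double integral defining $c_N$ and then through the outer integral defining $J_N^\pi$. The delicate ingredient is that the state law depends on the \emph{policy} in a way that is only continuous because of the decoupled structure of \eqref{eq:st-dy-N} and the martingale characterization \eqref{eq:martingale-1}; this is exactly what Lemma \ref{lem:2}(ii) provides, but one needs to combine it with the joint (rather than marginal) convergence of $(\pmb\gamma^{1:N},\pmb X^{1:N})$, which follows because the state paths are measurable images (in law) of the policies and independent noise. If instead continuity fails at the boundary and only lower semicontinuity is available (e.g. if one worked with a coarser cost), the Weierstrass argument still closes, so the robust route is to prove lower semicontinuity of $J_N^\pi$ via Fatou applied along the weakly convergent joint laws, using $c\ge 0$.
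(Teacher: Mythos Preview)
Your proposal is correct and follows essentially the same approach as the paper's proof: permutation invariance of $J_N^\pi$ via Assumption~\ref{assump1} and symmetry of the dynamics, averaging over $S_N$ together with linearity of $J_N^\pi$ in $P_\pi^N$ to reduce to $\LEXN$, and then Weierstrass on $\LEXN$ using the compactness from Lemma~\ref{lem:2}(i) and continuity from Lemma~\ref{lem:2}(ii). The only cosmetic difference is the order of the steps (the paper first establishes existence on $\LEXN$ and then argues the reduction, while you do the reverse), and the paper makes the continuity step explicit via Skorohod's representation and the generalized dominated convergence theorem rather than appealing directly to joint weak convergence.
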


The proof of Theorem \ref{the:5} proceeds in steps similar to those utilized in the proof of Theorem \ref{the:3}. We first show the continuity of expected cost in exchangeable randomized policies $\LEXN$ using Lemma \ref{lem:3}(ii). Then, using the compactness of $\LEXN$, we conclude that there exists an optimal policy among exchangeable policies as an application of the Weierstrass theorem. Since the cost is exchangeable and dynamics are symmetric among DMs, we show that any permutation of randomized policies belonging to $L^N$ attains the same team performance. This along with the convexity of $L^N$ and linearity of the expected cost in randomized policies, allows us to construct an exchangeable policy for any given randomized policy (by averaging over all its permutations) that performs at least as well as the given policy. This implies that the restriction to exchangeable policies is without loss of optimality and concludes the proof.

As a result of Theorem \ref{the:5}, for \PN, we can restrict our search for a globally optimal policy to exchangeable policies. In contrast with the convex teams setup, where in Theorem \ref{the:3} we established the existence of an optimal solution that is symmetric and Markovian, for non-convex teams, we can only show that a globally optimal solution is exchangeable (and not necessarily symmetric). This is because we have to allow for randomization to convexify the set of policies and the expected cost due to the lack of convexity of the cost and actions. To this end, we need to allow common and independent randomization to convexify the set of randomized policies. We note that the set of all randomized policies with only independent randomization is not a convex set; see \cite{saldi2022geometry,YukselSaldiSICON17}.  By allowing this large class of randomized policies, we can only construct an exchangeable policy by averaging over all the permutations of any randomized policies. While the existence of an exchangeable optimal solution significantly diminishes the search space, establishing symmetry and independence in randomization is more desirable. This preference arises from constraints in the computational/implementation aspects of common randomness and the computational burdens associated with solely having exchangeability without the guarantee of symmetry. Later on, for a class of mean-field teams, we establish the existence of an approximately optimal policy that is symmetric using a solution of the corresponding limiting mean-field team problem. 

We recall that for the convex team, a deterministic Markovian optimal policy can be obtained by fixing other DMs' policies to optimal symmetric Markovian randomized policies, considering a deviating DM's viewpoint as a stochastic control problem, and then taking a uniform convex combination to arrive at a resulting symmetric policy. For non-convex teams, we can start with an exchangeable optimal policy following Theorem \ref{the:5}, and then fix other policies except DM$^{i}$s policy to the marginal of an exchangeable optimal policy for DM$^{-i}$. Then, using the fact that dynamics are decoupled, the deviating DM$^{i}$s faces a classical stochastic control problem and we can derive HJB optimality equations to obtain a deterministic optimal solution. Sequentially repeating this procedure, we can obtain an optimal policy that is deterministic without showing that it is symmetric (as this requires convexity). From this policy, by considering its uniform permutations, we can create an exchangeable policy that is possibly randomized and asymmetric with correlation among DMs. Later on, as the number of DMs goes to infinite, we address this problem and we also provide an approximation result for large non-convex teams. 

Before presenting our main results for \PIN, we state the following lemma which will be used in the proof of Theorem \ref{the:6}. In the following key lemma, we show that to study the limit of the optimal expected cost for \PN\ within $N$-exchangeable policies, we can without any loss restrict the optimal policies to infinitely exchangeable policies. The proof of the following lemma is provided in Appendix \ref{app:lem:3}.

\begin{lemma}\label{lem:3}
    Consider \PN\ and \PIN\ under   Assumption \ref{assump:existence}, Assumption \ref{assump1}, and Assumption \ref{assum:5-4}.  Let $\mathbb{U}$ be compact and $b_t(\cdot,\cdot)$ and $\sigma_t(\cdot)$ in \eqref{eq:st-dy-N} are uniformly bounded. Suppose further that $\widehat{c}(\cdot, \ldots, \cdot)$ is uniformly bounded. Then,
\begin{align*}
    &\limsup_{N\to \infty}\inf_{P_{\pi}^{N}\in \LEXN}\int \left(\widehat{c}_{N}(\pmb{X}^{1:N},\pmb{\gamma^{1:N}})\prod_{i=1}^{N}\mu^i(d\pmb{X^{i}};\pmb{\gamma^{i}})\right) P_{\pi}^{N}(d\pmb{\gamma^{1:N}})\\
    &=\limsup_{N\to \infty}\inf_{P_{\pi}\in \LEX}\int \left(\widehat{c}_{N}(\pmb{X}^{1:N},\pmb{\gamma^{1:N}})\prod_{i=1}^{N}\mu^i(d\pmb{X^{i}};\pmb{\gamma^{i}})\right) P_{\pi,N}(d\pmb{\gamma^{1:N}}),
\end{align*}
where $P_{\pi,N}$ is the marginal of the $P_{\pi}\in \LEX$ to the first $N$ components.
\end{lemma}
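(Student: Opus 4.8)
The plan is to show two inequalities. The inequality ``$\le$'' is immediate: for each fixed $N$, any $P_\pi \in \LEX$ has its $N$-marginal $P_{\pi,N}$ lying in $\LEXN$ (restricting an infinitely-exchangeable measure to its first $N$ coordinates yields an $N$-exchangeable measure), so the infimum over $\LEXN$ is no larger than the infimum over marginals of $\LEX$; taking $\limsup_{N\to\infty}$ preserves this. The substance is the reverse inequality ``$\ge$'', i.e.\ that nothing is lost asymptotically by insisting the policy be \emph{infinitely} exchangeable rather than merely $N$-exchangeable.

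For the reverse inequality I would argue as follows. Fix $N$ and let $P_\pi^{N\star}\in\LEXN$ be (near-)optimal for the $N$-DM problem — it exists by Theorem \ref{the:5}. The idea is to \emph{symmetrize by i.i.d.\ extension}: build an infinitely-exchangeable policy out of $P_\pi^{N\star}$ whose $N$-marginal is close to $P_\pi^{N\star}$ in the sense that matters for the cost. One natural construction is the following. By Lemma \ref{lem:0}(i), $\LCOS = \LEX$, so it suffices to produce a conditionally-i.i.d.\ policy. Given the $N$-exchangeable $P_\pi^{N\star}$, pass to a subsequence along which the one-dimensional marginal $\widetilde P_\pi := \mathrm{marg}_1 P_\pi^{N\star}$ converges weakly in $\mathcal P(\Gamma^1)$ (possible since $\Gamma^1$ is compact by Lemma \ref{lem:2}(i) — here the boundedness of $b_t,\sigma_t$ and compactness of $\mathbb U$ are used); call the limit $\widetilde P_\pi^\infty$. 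Now let $P_\pi^\infty \in \LPRS \subseteq \LCOS = \LEX$ be the policy under which the $\pmb{\gamma^i}$ are i.i.d.\ with law $\widetilde P_\pi^\infty$. The remaining task is to show
\[
\limsup_{N\to\infty} \int \Big(\widehat c_N(\pmb X^{1:N},\pmb\gamma^{1:N})\prod_{i=1}^N \mu^i(d\pmb X^i;\pmb\gamma^i)\Big)\, (P_\pi^\infty)_N(d\pmb\gamma^{1:N})
\]
is no larger than the left-hand side of the claimed identity.

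The key mechanism that makes this work is \emph{propagation of chaos / de Finetti decoupling at the level of the empirical measure}. Under $P_\pi^{N\star}$, because the dynamics \eqref{eq:st-dy-N} are decoupled and the noises and initial conditions are i.i.d., the joint law of $(\pmb\gamma^{1:N},\pmb X^{1:N})$ is itself $N$-exchangeable; and by the cost structure \eqref{eq:cost-mf}, $\widehat c_N$ depends on $(\pmb X^{1:N},\pmb\gamma^{1:N})$ only through the empirical measure $\frac1N\sum_i \delta_{(X^i_t,U^i_t)}$ integrated in $t$. For an exchangeable array, the empirical measure concentrates (as $N\to\infty$) around the directing random measure in the de Finetti representation; since we are extracting the one-dimensional marginal and then making coordinates i.i.d., the limiting empirical measure under $P_\pi^\infty$ agrees in distribution with the limiting (mixture-averaged) empirical measure under the sequence $P_\pi^{N\star}$. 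Concretely: (i) use the de Finetti representation of $P_\pi^{N\star}$ (Lemma \ref{lem:0}, \cite[Theorem 1.1]{kallenberg2006probabilistic}) plus the continuity of $\Theta:\pmb\gamma\mapsto\mu(\cdot;\pmb\gamma)$ from Lemma \ref{lem:2}(ii) to express the $N$-cost as an average over a directing measure of ``conditionally-i.i.d.'' costs; (ii) apply a law of large numbers for the empirical measure of conditionally-i.i.d.\ state-action pairs; (iii) use boundedness and continuity of $\widehat c$ (Assumption \ref{assum:5-4} plus the assumed uniform bound) together with the generalized dominated convergence theorem of \cite{serfozo1982convergence} — exactly as in the proof of Theorem \ref{the:4} — to pass the limit through. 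One then checks that the same LLN-based limit is attained by the genuinely i.i.d.\ policy $P_\pi^\infty$, giving the matching upper bound.

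\textbf{Main obstacle.} I expect the crux to be step (ii)--(iii): making rigorous the claim that the \emph{limsup} of the optimal $N$-exchangeable costs is realized, in the limit, by a policy whose empirical measure of states and actions converges to the \emph{same} limit — i.e.\ interchanging the $\limsup_N$ with the passage from ``$N$-exchangeable, possibly genuinely correlated'' to ``conditionally i.i.d.''. The subtlety is that $N$-exchangeable measures need not be conditionally i.i.d.\ (Lemma \ref{lem:0}(ii)), so one cannot simply invoke de Finetti for fixed $N$; the correlations must be shown to be asymptotically negligible for the cost, which is where the fact that $\widehat c_N$ sees only the empirical measure, combined with concentration of the empirical measure along the exchangeable sequence, does the work. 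Handling the diagonal subsequence extraction carefully (first along which the $N$-infima converge to their $\limsup$, then along which the $1$-marginals converge) and ensuring the limiting i.i.d.\ policy is admissible (wide-sense, with the right independence from future noise increments — this is inherited since each coordinate policy is a weak limit of admissible single-DM policies) are the remaining technical points, but these parallel the arguments already used for Theorems \ref{the:3}--\ref{the:5}.
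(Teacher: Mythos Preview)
Your trivial direction ``$\le$'' is fine. The reverse direction has a genuine gap: the specific construction you propose --- take the one-dimensional marginal of the optimal $P_\pi^{N\star}\in\LEXN$, pass to a subsequential limit $\widetilde P_\pi^\infty$, and let $P_\pi^\infty$ be the i.i.d.\ product with that marginal --- does \emph{not} in general reproduce the cost of $P_\pi^{N\star}$. The issue is that an $N$-exchangeable optimum may carry essential common randomness. For a concrete illustration, suppose $\widehat c$ depends on the empirical action mean $\bar U$ as $-(\bar U-\tfrac12)^2$ with $\mathbb U=[0,1]$; the optimal exchangeable policy is the $50$--$50$ mixture of ``all play $0$'' and ``all play $1$'', achieving cost $-1/4$. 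Its one-marginal is Bernoulli$(1/2)$, and your i.i.d.\ product $P_\pi^\infty$ has empirical mean concentrating at $1/2$, giving cost $0$. So $P_\pi^\infty\in\LPRS$ is strictly worse and cannot witness the inequality you need. You also invoke ``the de Finetti representation of $P_\pi^{N\star}$'' in step~(i), which, as you yourself note via Lemma~\ref{lem:0}(ii), does not exist for finitely exchangeable laws; that step is circular.

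What the paper does instead is precisely the missing ingredient: it does \emph{not} try to land directly in $\LPRS$, but builds from each $P_\pi^{N\star}$ an element of $\LEX$ via the Diaconis--Freedman ``sampling with replacement'' construction (their Theorem~\ref{the:dia}): draw i.i.d.\ uniform indices $I_1,I_2,\ldots$ on $\{1,\ldots,N\}$ and set $\pmb\gamma^j_{\text{new}}=\pmb\gamma^{I_j}$. This yields an infinitely-exchangeable law whose $m$-marginal is within total variation $m(m-1)/(2N)$ of the $m$-marginal of $P_\pi^{N\star}$, so the two sequences have the same subsequential distributional limits. Aldous's result (their Theorem~\ref{the:ald}) then transfers this to convergence of the empirical measures to the \emph{directing} random measure of the limiting infinitely-exchangeable law --- which retains the common-randomness structure your construction discards. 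Continuity and boundedness of $\widehat c$ then close the argument via dominated convergence. The reduction from $\LEX$ to $\LPRS$ is a separate step (Theorem~\ref{the:6}), done afterwards via linearity of the cost in $P_\pi$ and extreme points of $\LCOSN$; conflating it with Lemma~\ref{lem:3} is exactly what breaks your argument.
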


Our approach for establishing the existence of an optimal policy that is symmetric for \PIN\ is to use a sequence of optimal policies for \PN\ (which can be assumed to be exchangeable by Theorem \ref{the:5}). Lemma \ref{lem:3} allows us to work with a sequence of infinitely exchangeable policies instead. This simplifies our convergence analysis thanks to a de Finetti representation theorem in Lemma \ref{lem:0}. We note that we cannot extend any exchangeable policy in $\LEXN$ to an infinitely exchangeable policy in $\LEX$ with marginals on the first $N$ components coinciding with the given exchangeable policy in $\LEXN$. The proof of Lemma \ref{lem:3}, utilizes two key results by Diaconis and Friedman \cite[Theorem 13]{diaconis1980finite} and Aldous \cite[Proposition 7.20]{aldous2006ecole}, regarding approximations of exchangeable random variables and convergence properties of exchangeable and infinitely exchangeable random variables. In particular, using \cite[Theorem 13]{diaconis1980finite} (see Theorem \ref{the:dia} in  Appendix), we show that a sequence of exchangeable optimal policies for \PN\ admits a subsequence that converges in distribution to an exchangeable policy. In addition, using \cite[Proposition 7.20]{aldous2006ecole}, we conclude that the sequence of empirical measures of converging subsequence of exchangeable optimal policies for \PN\ also converges to a directing measure of an exchangeable policy (see Theorem \ref{the:ald} in  Appendix). This then using continuity of the cost function concludes the proof.

Next using Theorem \ref{the:5},  Lemma \ref{lem:0}, and Lemma \ref{lem:3}, we establish the existence of a randomized globally optimal policy for \PIN\ that is symmetric and independently randomized. The proof of the following theorem is provided in Appendix \ref{app:the:6}.

\begin{theorem}\label{the:6}
    Consider \PIN\ under Assumption \ref{assump:existence}, Assumption \ref{assump1}, and Assumption \ref{assum:5-4}. Let $\mathbb{U}$ be compact and $b_t(\cdot,\cdot)$ and $\sigma_t(\cdot)$ in \eqref{eq:st-dy-N} be uniformly bounded. Then,  there exists a symmetric privately randomized globally optimal policy $P_{\pi}^{\star}$ for \PIN\ among all randomized policies $L$, i.e.,   $P_{\pi}^{\star}\in \LPRS$.

\end{theorem}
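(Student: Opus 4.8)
The plan is to obtain the symmetric, privately randomized optimal policy for \PIN\ as a limit of exchangeable optimal policies for the finite-population problems \PN, combining Theorem \ref{the:5}, Lemma \ref{lem:3}, Lemma \ref{lem:0}, and Lemma \ref{lem:2}. First, by Theorem \ref{the:5}, for each $N$ there is an exchangeable randomized optimal policy $P_\pi^{N\star}\in\LEXN$ whose cost equals $\inf_{P_\pi^N\in L^N}J_N^\pi(P_\pi^N)$. By Lemma \ref{lem:3} (under the extra assumption that $\widehat c$ is uniformly bounded, which we inherit here through Assumption \ref{assum:5-4} together with compactness of $\mathbb{U}$ and boundedness of $b_t,\sigma_t$), the limsup over $N$ of these optimal costs coincides with the limsup over $N$ of the same expression minimized over marginals of infinitely exchangeable policies $P_\pi\in\LEX$; so without loss we may pass to a sequence $\{P_\pi^N\}_N\subseteq\LEX$ (with $P_{\pi,N}^N$ nearly optimal for the $N$-th marginal problem). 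The crux is then a compactness-plus-continuity argument: since $\LEX$ is convex and compact (Lemma \ref{lem:2}(i) applies since $\mathbb{U}$ is compact and $b_t,\sigma_t$ are uniformly bounded), extract a subsequence $P_\pi^{N_k}\to P_\pi^\star$ in $\LEX$; I then want to show $P_\pi^\star$ is globally optimal for \PIN.

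For the optimality, the key steps are: (1) By Lemma \ref{lem:0}(i), $\LEX=\LCOS$, so $P_\pi^\star$ admits a de Finetti representation $P_\pi^\star(\cdot)=\int_{[0,1]}\prod_{i}\widetilde P_\pi(\cdot\mid z)\,\eta(dz)$ — it is conditionally i.i.d.\ given the common randomness $z$. (2) For each fixed $z$, the conditional law $\prod_i\widetilde P_\pi(d\pmb\gamma^i\mid z)$ lies in $\LPRS$ (private independent randomization, symmetric). (3) The running cost \eqref{eq:cost-mf} depends on $(\pmb\gamma^{1:N})$ only through the empirical measures of states and actions; under the conditionally i.i.d.\ structure, conditioned on $z$, the empirical measures converge (a.s.\ and in $L^1$, by a law of large numbers for i.i.d.\ path-valued random variables — using Lemma \ref{lem:2}(ii) to push policies to state laws) to a deterministic limit $\mu_X^z, \mu_U^z$ as $N\to\infty$. (4) Consequently $J^\pi(P_\pi^\star)=\int_{[0,1]}J^\pi\big(\prod_i\widetilde P_\pi(\cdot\mid z)\big)\eta(dz)$, i.e., the cost of a common-randomness policy is an average of costs of symmetric private-randomness policies, so there exists $z^\star$ with $J^\pi\big(\prod_i\widetilde P_\pi(\cdot\mid z^\star)\big)\leq J^\pi(P_\pi^\star)$; this $z^\star$-slice is the desired $P_\pi^\star\in\LPRS$. (5) To see $P_\pi^\star$ is in fact optimal over all of $L$, combine: the limit policy achieves $\limsup_N$ of the finite-population optimal costs by the continuity of $P_\pi\mapsto J_N^\pi(P_{\pi,N})$ along the converging subsequence (generalized dominated convergence, as in the proof of Theorem \ref{the:4}), and any competitor $\underline{\pmb\gamma}\in\Gamma$ (equivalently any $P_\pi\in L$) has $J^\pi(P_\pi)=\limsup_N J_N^\pi(P_{\pi,N})\geq\limsup_N\inf_{L^N}J_N^\pi$, which is exactly the value achieved by $P_\pi^\star$.

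I would carry out the steps in the order: (i) invoke Theorem \ref{the:5} to get exchangeable optimizers; (ii) apply Lemma \ref{lem:3} to replace them by (marginals of) infinitely exchangeable policies; (iii) use compactness of $\LEX$ (Lemma \ref{lem:2}(i)) to extract a weakly convergent subsequence with limit $P_\pi^\star$; (iv) establish continuity of $P_\pi\mapsto J_N^\pi(P_{\pi,N})$ and lower semicontinuity of the limsup to conclude $J^\pi(P_\pi^\star)$ equals the limiting optimal value, hence $P_\pi^\star$ is optimal over $L$; (v) use the de Finetti representation (Lemma \ref{lem:0}(i)) together with the conditional law-of-large-numbers for the empirical measures to disintegrate $J^\pi(P_\pi^\star)$ over the common randomness and pick an optimal slice in $\LPRS$.

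The main obstacle I anticipate is step (iv)/(v): specifically, justifying that the cost functional behaves well under the de Finetti disintegration — that conditioning on the common randomness $z$, the empirical measures $\frac1N\sum\delta_{X_t^i}$ and $\frac1N\sum\delta_{U_t^i}$ genuinely converge (so that $J^\pi$ restricted to a conditionally-i.i.d.\ policy reduces to a representative-agent cost), and that this convergence is uniform enough in $z$ to interchange the limsup in $N$ with the $\eta$-average. This requires a careful law of large numbers for exchangeable path-space-valued variables (leaning on Lemma \ref{lem:2}(ii) for the map from policies to state laws, on the uniform boundedness of $\widehat c$, and on the generalized dominated convergence theorem of \cite{serfozo1982convergence}), together with Fatou-type control of the $\limsup_N$ under the integral over $z$ — which is where the boundedness of $\widehat c$ is essential.
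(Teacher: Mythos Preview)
Your proposal is essentially correct and uses the same ingredients as the paper (Theorem~\ref{the:5}, Lemma~\ref{lem:3}, Lemma~\ref{lem:0}(i), Lemma~\ref{lem:2}), but the order in which you deploy them differs from the paper's in one important respect, and this makes your route harder than necessary.

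The paper's proof is a closed chain of inequalities. After restricting to $\LEXN$ (Theorem~\ref{the:5}) and passing to $\LEX$ (Lemma~\ref{lem:3}), the paper uses $\LEX=\LCOS$ to rewrite the infimum over marginals as an infimum over $\LCOSN$. The key simplification is that this reduction to $\LPRS^{N}$ is done \emph{at each fixed finite $N$}: since $P_\pi^N\mapsto J_N^\pi(P_\pi^N)$ is linear (it is literally an integral against $P_\pi^N$) and $\LCOSN$ is convex with extreme points in $\LPRS^N$, one has $\inf_{\LCOSN}J_N^\pi=\inf_{\LPRS^N}J_N^\pi$. Only after this does the paper pass to the limit in $N$ (via the argument of Theorem~\ref{the:4}) and close the chain using $\LPRS\subset L$. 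No interchange of $\limsup_N$ with an $\eta$-integral ever arises.

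Your route instead passes to the limit first, obtains $P_\pi^\star\in\LCOS$, and then disintegrates $J^\pi(P_\pi^\star)$ over the common randomness to select a $\LPRS$ slice. The obstacle you flag --- justifying $J^\pi(P_\pi^\star)=\int J^\pi(\prod_i\widetilde P_\pi(\cdot|z))\eta(dz)$ when $J^\pi$ is defined via a $\limsup$ --- is real, and while it can be handled (conditioned on $z$ the policies are i.i.d., so the inner $J_N^\pi$ actually converges by the LLN, and dominated convergence over $z$ applies with a bounded integrand), it is extra work that the paper's finite-$N$ extreme-point argument sidesteps entirely. Both approaches yield the result; the paper's buys you a cleaner proof with no diagonal limits, while yours is more constructive (it explicitly identifies $P_\pi^\star$ as a subsequential limit).

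One minor gap: you claim boundedness of $\widehat c$ is ``inherited'' from Assumption~\ref{assum:5-4} plus compactness of $\mathbb{U}$ and boundedness of $b_t,\sigma_t$. It is not --- the state space is $\mathbb{R}^d$, so continuity alone does not give a uniform bound. The paper handles this by inserting a truncation $\min\{M,\widehat c_N\}$ before invoking Lemma~\ref{lem:3} (which is stated for bounded $\widehat c$), and removing it at the end via monotone convergence. You should add the same truncation step.
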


We now provide an insight for the proof of Theorem \ref{the:6}. Utilizing Theorem \ref{the:5},  Lemma \ref{lem:0}, and Lemma \ref{lem:3}, we can show that for studying the limiting behavior of the optimal expected cost of \PN, we can restrict the policies to infinitely exchangeable policies which are conditionally identical and independent, i.e., they belong to  $\LCOS$ restricted to $N$ first components $\LCOS|_N=\LCOSN$ (following Lemma \ref{lem:0}(i)). Since the set $\LCOSN$ is convex and the expected cost is linear in randomized policies, an optimal solution among all $\LCOSN$ lies in its set of extreme points which is the set of independently randomized symmetric policies $\LPRS^N$. This allows us to study the limiting behavior of the optimal expected cost of \PN\ only under the randomized symmetric policies $\LPRS^N$. Thanks to the symmetry and continuity of the cost, we then establish the convergence of the optimal expected cost of \PN\ to the optimal expected cost of its mean-field limit \PIN.

As a result of Theorem \ref{the:6}(i), for \PIN, we can restrict our search for a globally optimal policy to independently randomized symmetric policies. We emphasize that independently randomized symmetric policies are a strict subset of finite exchangeable policies, yielding that Theorem \ref{the:6} provides a stronger existence and structural result than Theorem \ref{the:5}.

Existence of a globally optimal solution for \PIN\ that is symmetric and independently randomized, allows us to represent the infinite population mean-field problem as a single representative DM control problem by replacing the empirical measure of i.i.d. states and actions as their limits $\mu_{X_t^{\sf R}}=\mathcal{L}(X_t^{\sf R})$ and $\mu_{U_t^{\sf R}}=\mathcal{L}(U_t^{\sf R})$. In contrast to the convex team, where we show that an optimal policy is symmetric and Markovian, for non-convex teams, we showed that an optimal solution is symmetric and independently randomized among wide-sense admissible policies. By fixing other DM's policies to these symmetric and independently randomized, the deviating DM still faces a decoupled stochastic control problem since policies are independently randomized and dynamics are decoupled. The fact that the optimization is over wide-sense admissible policies leads to additional complications for the corresponding HJB or master equations for the representative DM as the symmetric policies can be path-dependent. Nevertheless, the corresponding HJB or master equations with consistency conditions for the representative DM can be derived to compute a symmetric optimal policy provided that sufficient regularity conditions hold.

In the following theorem, we use an independently randomized symmetric optimal solution of \PIN\ to provide approximations for an optimal solution of \PN\ for large $N$. The proof of Theorem \ref{the:7} is included in the Appendix \ref{APP:pfthe7}.

\begin{theorem}\label{the:7}
    Consider \PN\ and \PIN\ under Assumption \ref{assump:existence}, Assumption \ref{assump1}, and Assumption \ref{assum:5-4}. Let $\mathbb{U}$ be compact and $b_t(\cdot,\cdot)$ and $\sigma_t(\cdot)$ in \eqref{eq:st-dy-N} be uniformly bounded. Then, there exists an $\epsilon_N$ that converges to $0$ as $N\to \infty$ such that a symmetric privately randomized globally optimal policy $P_{\pi}^{\star}\in \LPRS$ for \PIN\ is $\epsilon_N$-optimal for \PN, i.e.,
    \begin{align}\label{eq:approx-opt}
        \inf_{P_{\pi}^{N}\in L^N}J_{N}^{\pi}(P_{\pi}^{N})\geq J_{N}^{\pi}(P_{\pi,N}^{\star})-\epsilon_N,
    \end{align}
    where $P_{\pi,N}^{\star}$ is the restriction of $P_{\pi}^{\star}$ to its first $N$-components.
\end{theorem}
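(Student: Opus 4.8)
\textbf{Proof plan for Theorem \ref{the:7}.}

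The plan is to sandwich the finite-population optimal cost between the infinite-population optimal cost and the cost achieved by the restriction $P_{\pi,N}^{\star}$, and to control both gaps by $\epsilon_N\to 0$. Concretely, I would set $v_N:=\inf_{P_{\pi}^{N}\in L^N}J_{N}^{\pi}(P_{\pi}^{N})$ and $v_\infty:=J^{\pi}(P_{\pi}^{\star})=\inf_{P_{\pi}\in L}J^{\pi}(P_{\pi})$ (the latter equality by Theorem \ref{the:6}). The inequality \eqref{eq:approx-opt} is equivalent to $J_{N}^{\pi}(P_{\pi,N}^{\star})\le v_N+\epsilon_N$, so it suffices to produce a sequence $\epsilon_N\to0$ with both (a) $|J_{N}^{\pi}(P_{\pi,N}^{\star})-v_\infty|\le \epsilon_N/2$ and (b) $|v_N-v_\infty|\le\epsilon_N/2$. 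Step (a) is a direct consequence of the continuity machinery already used for Theorem \ref{the:6}: since $P_{\pi}^{\star}\in\LPRS$ is symmetric and privately (independently) randomized, the state processes $\pmb{X^{1:N}}$ are i.i.d.\ under $P_{\pi,N}^{\star}$, so by the law of large numbers the empirical measures $\frac1N\sum_{p=1}^N\delta_{X_t^p}$ and $\frac1N\sum_{p=1}^N\delta_{U_t^p}$ converge weakly (almost surely) to the fixed laws $\mathcal{L}(X_t^{\sf R})$, $\mathcal{L}(U_t^{\sf R})$; combined with Assumption \ref{assum:5-4} (continuity of $\widehat{c}$), uniform boundedness of $\widehat c$, $b$, $\sigma$, and the generalized dominated convergence theorem of \cite{serfozo1982convergence}, the running cost converges and $\limsup_N J_N^{\pi}(P_{\pi,N}^{\star})=\lim_N J_N^{\pi}(P_{\pi,N}^{\star})=J^{\pi}(P_{\pi}^{\star})=v_\infty$. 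Hence one may take $\epsilon_N^{(a)}:=|J_N^{\pi}(P_{\pi,N}^{\star})-v_\infty|\to0$.

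For step (b), I would show $\limsup_N v_N = v_\infty$ by a two-sided argument. The inequality $\limsup_N v_N\le v_\infty$ follows from step (a), since $v_N\le J_N^{\pi}(P_{\pi,N}^{\star})$ and the right side tends to $v_\infty$. For the reverse direction $\liminf_N v_N\ge v_\infty$, I would invoke Theorem \ref{the:5} to replace, for each $N$, an (approximately) optimal policy by an exchangeable optimal policy $P_{\pi}^{N\star}\in\LEXN$, so that $v_N=\inf_{P_\pi^N\in\LEXN}J_N^{\pi}(P_\pi^N)$. Then Lemma \ref{lem:3} lets me pass to infinitely exchangeable policies: $\limsup_N v_N=\limsup_N\inf_{P_\pi\in\LEX}J_N^{\pi}(P_{\pi,N})$. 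Now take a sequence $P_\pi^{(N)}\in\LEX$ that is $1/N$-optimal for the $N$-th infimum; by compactness of $\LEX$ (Lemma \ref{lem:2}(i)) extract a weakly convergent subsequence $P_\pi^{(N_k)}\to P_\pi^{\infty}\in\LEX$. Using Lemma \ref{lem:0}(i), $P_\pi^{\infty}\in\LCOS=\LEX$, so it is conditionally i.i.d., and the de Finetti directing measure of the empirical distribution of policies converges (the Aldous/Diaconis–Friedman machinery cited for Lemma \ref{lem:3}); together with Lemma \ref{lem:2}(ii) (continuity of the policy-to-path-law map) and the generalized dominated convergence theorem, the expected costs converge, giving $\liminf_k v_{N_k}\ge J^{\pi}(P_\pi^{\infty})\ge v_\infty$. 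Since every subsequence of $(v_N)$ has a further subsequence with $\liminf\ge v_\infty$, we conclude $\liminf_N v_N\ge v_\infty$, hence $\lim_N v_N=v_\infty$. Setting $\epsilon_N:=|J_N^{\pi}(P_{\pi,N}^{\star})-v_N|=|J_N^{\pi}(P_{\pi,N}^{\star})-v_\infty|+|v_\infty-v_N|\to0$ yields \eqref{eq:approx-opt}.

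The main obstacle I anticipate is the lower-semicontinuity/convergence argument in step (b): one must be careful that the empirical-measure-of-policies and the induced empirical measures of states and actions converge \emph{jointly} in the right sense, since $\widehat c_N$ couples the representative state/action with the empirical measures, and that the limiting object is consistent with the de Finetti directing random measure of $P_\pi^\infty$ rather than merely some weak subsequential limit. This is exactly the delicate point already handled in the proofs of Lemma \ref{lem:3} and Theorem \ref{the:6} via \cite[Theorem 13]{diaconis1980finite} and \cite[Proposition 7.20]{aldous2006ecole}, so the role of the proof here is to assemble those ingredients; the uniform boundedness hypotheses on $\widehat c$, $b$, $\sigma$ and compactness of $\mathbb U$ are what make the dominated-convergence step legitimate. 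A secondary (routine) point is checking that the marginalization $P_\pi^\star\mapsto P_{\pi,N}^\star$ preserves the symmetric/private-randomization structure (immediate from the product form of $\LPRS$) so that the i.i.d.\ property used in step (a) genuinely holds.
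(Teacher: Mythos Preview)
Your proposal uses the same ingredients as the paper (Theorem \ref{the:5}, Lemma \ref{lem:3}, the de Finetti/Diaconis--Freedman/Aldous machinery, and the continuity argument behind Theorem \ref{the:6}), but it is organized differently. You pass through the limiting value $v_\infty$ and aim for $v_N\to v_\infty$ together with $J_N^{\pi}(P_{\pi,N}^{\star})\to v_\infty$; the paper instead stays at finite $N$ throughout. Concretely, it extracts from the \emph{proof} of Lemma \ref{lem:3} the quantitative bound $\inf_{\LEX|_N}J_N^{\pi}\le v_N+\widehat{\epsilon}_N$ with $\widehat{\epsilon}_N\to0$, then uses $\LEX|_N=\LCOSN$ and the extreme-point reduction to $\LPRS^N$ to get $\inf_{\LPRS^N}J_N^{\pi}\le v_N+\widehat{\epsilon}_N$, and finally invokes the convergence behind Theorem \ref{the:6} to obtain $J_N^{\pi}(P_{\pi,N}^{\star})\le \inf_{\LPRS^N}J_N^{\pi}+\widetilde{\epsilon}_N$. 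Summing the two error terms gives \eqref{eq:approx-opt} directly, with no subsequence extraction needed.

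Your route is correct in spirit, but one step is imprecise as written. In your $\liminf_N v_N\ge v_\infty$ argument you take $P_\pi^{(N)}$ nearly optimal in $\LEX$; this only yields $\liminf_N \inf_{\LEX|_N}J_N^{\pi}\ge v_\infty$, and since $v_N=\inf_{\LEXN}J_N^{\pi}\le \inf_{\LEX|_N}J_N^{\pi}$ that inequality goes the wrong way. What is actually needed (and what you gesture at by citing the Diaconis--Freedman/Aldous machinery) is to take $P_\pi^{N\star}\in\LEXN$ optimal, approximate it by an infinitely exchangeable policy with vanishing total-variation error as in the proof of Lemma \ref{lem:3}, and then extract a convergent subsequence in $\LEX$; along that sub-subsequence one gets $v_{N_{k_j}}\to$ (cost of the limit) $\ge v_\infty$, which is the right direction. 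Once this is fixed your argument goes through; the paper's finite-$N$ bookkeeping simply packages the same step as the inequality $\inf_{\LEX|_N}J_N^{\pi}\le v_N+\widehat{\epsilon}_N$, which is arguably cleaner. (Also, your final line should read $\epsilon_N\le |J_N^{\pi}(P_{\pi,N}^{\star})-v_\infty|+|v_\infty-v_N|$, not an equality.)
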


The proof of Theorem \ref{the:7} follows from our convergence analysis in the proof of Theorem \ref{the:6}. As an implication of Theorem \ref{the:7}, we can restrict our search for an approximate optimal solution of \PN\ for large $N$ to independently randomized symmetric policies. An optimal solution of the mean-field team provides an approximation for an independently randomized symmetric optimal solution that is scalable with $N$. The precision of this approximation depends on $N$. 

Throughout our analysis in this section, we used the fact that dynamics are decoupled. This allows us to establish continuity of the probability measure on the path space of states in randomized policies using martingale equations. In addition, the state processes are independent and identical among DMs under independently randomized symmetric policies since dynamics are decoupled. This is utilized in our convergence analysis in Theorem \ref{the:5}. In the next section, we allow coupling in dynamics which requires additional technical arguments.

\section{Exchangeable (Non-Convex) Stochastic Teams with Coupled Dynamics: Optimality and Near Optimality of Symmetric Policies}\label{sec:coup}

In the previous sections, we studied teams with decoupled dynamics. In this section, we relax this assumption by allowing the dynamics to be weakly coupled among DMs through the empirical measures of states and actions, i.e., each DM has access to only a private state process that evolves as a controlled diffusion process given by the solution of the following SDE, i.e.,
\begin{align}
    dX_{t}^i = b_{t}\left(X_{t}^i,U_{t}^i, \frac{1}{N}\sum_{p=1}^{N} \delta_{X^{p}_t},\frac{1}{N}\sum_{p=1}^{N} \delta_{U^{p}_t}\right)dt + \sigma_t(X_{t}^{i})dW_{t}^{i}, \quad t\in \mathbb{T}.\label{eq:st-dy-C}
\end{align}

{We assume that $b_t(\cdot,\cdot, \cdot, \cdot)$ and $\sigma_t(\cdot)$ in \eqref{eq:st-dy-C} satisfy conditions to guarantee the existence of a unique strong solution under non-wide-sense admissible policies.
\begin{assumption}\label{assump:existence2}
The following three conditions hold:
\begin{itemize}[wide]
    
    \item [(i)-Lipschitz continuity:] For every $t\in \mathbb{T}$, $b_t:\mathbb{R}^d \times \mathbb{U} \times \mathcal{P}(\mathbb{R}^d) \times \mathcal{P}(\mathbb{U})\to \mathbb{R}^d$ and $\sigma_{t}:\mathbb{R}^d \to \mathbb{R}^{d\times d}$ are  Lipschitz continuous as follows 
    \begin{align}
     \sup_{u\in \mathbb{U},q\in \mathcal{P}(\mathbb{U})}|b_{t}(x,u,\mu, q)- b_{t}(x^{\prime},u,\mu^{\prime},q)| +  \|\sigma_{t}(x)-\sigma_t(y)\| \leq C\left(|x-y|+\mathcal{W}_{2}(\mu,\mu^{\prime})\right),
    \end{align}
    for all $x,x^{\prime}\in \mathbb{R}^d$ and some constant $C>0$.  In the above, $\mathcal{W}_{2}(\cdot,\cdot)$ is the $2$-Wasserstein distance on the subspace of probability measures  order $2$ ($\mathcal{P}_{2}(\mathbb{R}^d)$), and  $\|\sigma_{t}(x)\|:=\sqrt{{\sf Tr}[\sigma^T_t(x)\sigma_t(x)]}$. 
\item [(ii)-Boundedness:] For every $t\in \mathbb{T}$, $b_t$ and $\sigma_t$ are uniformly bounded, i.e.,
\begin{align*}
\sup_{u\in \mathbb{U}, q\in \mathcal{P}(\mathbb{U})}|b_{t}(x,u,p,q)|^2 + \|\sigma_{t}(x)\|^2 \leq C_{0}\qquad \forall (x,p)\in \mathbb{R}^d \times \mathcal{P}(\mathbb{R}^d)
\end{align*}
for some constant $C_{0}>0$.
\item [(iii)] Assumption \ref{assump:existence}(iii) holds.
 \end{itemize}
 \end{assumption}
 
First, under Assumption \ref{assump:existence2}, following \cite[Theorem 2.2.4]{arapostathis2012ergodic},   \eqref{eq:st-dy-C} admits a unique strong solution under wide-sense admissible policies. In this section, we use Girsonov's theorem \cite{girsanov1960transforming, benevs1971existence} to (probabilistically) decouple the dynamics among DMs and transfer the states correlations to the cost, leading to a more complicated cost with an exponential Radon-Nikodym derivative term. By transferring coupling to the cost function, we can follow the approaches developed in the previous sections with some minor modifications.}

Denote the induced probability measure on $\pmb{X^{1:N}}$ by $P_N$ with the marginal on $\pmb{X^i}$ by $P^i_N$. Girsonov's theorem \cite{girsanov1960transforming, benevs1971existence} enables us to  transform the probability measure $P_N$ to another probability measure $P_{0}:=\prod_{i=1}^{N}P^{i}_{0}$ as the induced probability measure on the path space of states driven by the following SDE
 \begin{align}\label{eq:mf-dynamic-dec}
    dX_{t}^i = b_{t}\left(X_{t}^i,U_{t}^i, \mu^{X}_{t},\mu^{U}_{t}\right)dt + \sigma_t(X_{t}^{i})dW_{t}^{i}, \quad t\in \mathbb{T},
\end{align}
for every $i\in \mathcal{N}$, where $\mu^{X}_{t}$  and $\mu^{U}_{t}$  are the law of random variables $X_{t}^i$ and $U_t^i$ for all $i\in \mathcal{N}$ and $t\in \mathbb{T}$, i.e., $\mu^{X}_{t}=\mathcal{L}(X_{t}^i)$ and $\mu^{U}_{t}=\mathcal{L}(U_{t}^i)$. The above measure transformation applies provided that
\begin{align}\label{eq:novikov}
    \mathbb{E}^{P^{i}_{0}}\left[{\sf exp}\left(\frac{1}{2}\int_{0}^{T} \left|\sigma^{-1}_t(X_{t}^{i})\overline{b_{t}}\left(X_{t}^i,U_{t}^i, \frac{1}{N}\sum_{p=1}^{N} \delta_{X^{p}_t},\frac{1}{N}\sum_{p=1}^{N} \delta_{U^{p}_t}\right)\right|^2dt\right)\right]< \infty\quad \forall i\in \mathcal{N}
\end{align}
uniform over admissible policies, where 
\begin{align}
&\overline{b_{t}}\left(X_{t}^i,U_{t}^i, \frac{1}{N}\sum_{p=1}^{N} \delta_{X^{p}_t},\frac{1}{N}\sum_{p=1}^{N} \delta_{U^{p}_t}\right)\label{eq:overb}\\
&:=b_{t}\left(X_{t}^i,U_{t}^i, \frac{1}{N}\sum_{p=1}^{N} \delta_{X^{p}_t},\frac{1}{N}\sum_{p=1}^{N} \delta_{U^{p}_t}\right)-b_{t}\left(X_{t}^i,U_{t}^i, \mu^{X}_{t},\mu^{U}_{t}\right)\nonumber.
\end{align}
Under this measure transformation, $\pmb{X^{1:N}}$ are independent among DMs 
 with marginals $P^{i}_{0}$ for all $i\in \mathcal{N}$. Since $b_t$ is uniformly integrable over all admissible policies and $\sigma_t$ are uniformly bounded with bounded inverse, \eqref{eq:novikov} holds, and hence, the following Radon-Nikodym derivative term is well-defined
\begin{align*}
    \frac{dP_N}{dP_{0}}:=\prod_{i=1}^{N}\widehat{Z}_{N}^i&:=\prod_{i=1}^{N}\bigg[{\sf exp}\bigg(\int_{0}^{T} \sigma^{-1}_t(X_{t}^{i})\overline{b_{t}}\left(X_{t}^i,U_{t}^i, \frac{1}{N}\sum_{p=1}^{N} \delta_{X^{p}_t},\frac{1}{N}\sum_{p=1}^{N} \delta_{U^{p}_t}\right)dW_{t}^{i}\\
    &-\frac{1}{2}\int_{0}^{T} \left|\sigma^{-1}_t(X_{t}^{i})\overline{b_{t}}\left(X_{t}^i,U_{t}^i, \frac{1}{N}\sum_{p=1}^{N} \delta_{X^{p}_t},\frac{1}{N}\sum_{p=1}^{N} \delta_{U^{p}_t}\right)\right|^2dt\bigg)\bigg].
\end{align*}
Under this measure transformation, the new cost function for each DM$^i$ is given by
\begin{align}
\widetilde{c}\left(X_{t}^{i},U_{t}^{i},\frac{1}{N}\sum_{p=1}^{N} \delta_{X_{t}^{p}},\frac{1}{N}\sum_{p=1}^{N} \delta_{U_{t}^{p}} \right):= \widehat{c}\left(X_{t}^{i},U_{t}^{i},\frac{1}{N}\sum_{p=1}^{N} \delta_{X_{t}^{p}},\frac{1}{N}\sum_{p=1}^{N} \delta_{U_{t}^{p}} \right) \prod_{i=1}^{N}\widehat{Z}_{N}^i.\label{eq:MFT-cost-ind}
\end{align}
We note that under the above change of measure, \PN\ and \PIN\ can be redefined such that the cost is replaced with the new cost function and the state processes are independent among DMs and evolve according to SDE in \eqref{eq:mf-dynamic-dec}. 

Similar to Section \ref{sec:set-Ran}, for each DM$^{i}$ ($i\in \mathcal{N}$), we re-define $\Gamma^i$ as the set of all wide-sense admissible control policies so that the random measure-valued variables $\pmb{\gamma^{i}}(\omega)\in  \mathcal{P}([0,T]\times \mathbb{U})$ satisfy the following two conditions:
\begin{itemize}
    \item [(i)] Marginals on $[0,T]$ are fixed to be the Lebesgue measure.
    \item [(ii)]  $\gamma^{i}_{t}:\omega\mapsto \pmb{\gamma^{i}}(\cdot|t)(\omega)$ is independent of $W^{i}_{s}-W^{i}_{t}$ for $s>t$  and for any $t\in \mathbb{T}$, and  $W^{j}_{[0,T]}$ for any $j\in \mathcal{N}$, $j\not=i$. 
\end{itemize}

   We equip $\Gamma^i\subseteq\mathcal{P}([0,T]\times \mathbb{U})$ with the Young topology. We consider $\mathcal{C}([0, T];\mathbb{X})$-valued $\pmb{X}^{i}$ process
 (under sup-norm) induced by $\pmb{\gamma^i}(d{U},dt)(\omega)$ that solves SDE in \eqref{eq:mf-dynamic-dec}, and hence, for any $x_{0}\in \mathbb{X}$, $t\in \mathbb{T}$, and  twice continuously differentiable functions $f:\mathbb{X}\to \mathbb{R}$ 
    \begin{align}\label{eq:martin-2}
        \mathbb{E}\left[f(X_{t}^{i})-f(X_{0}^i)-\int_{0}^{t}\mathcal{A}^{{U^i}}f(X_{s}^{i})\pmb{\gamma^{i}}(d{U^{i}},ds)\bigg|X_{0}^{i}=x_0\right] = 0,
        \end{align}
    where $\mathcal{A}^{{U^i}}$ is the generator of the transition semigroup of $\pmb{X^{i}}$, i.e.,  
\begin{align}\label{eq:Gen-coup}
\mathcal{A}^{{U^i}}(X_{s}^{i})&:={\sf Tr}\left(a_{s}(X_{s}^i)\nabla^2 f(X_{s}^{i})\right)+b_{s}(X_{s}^{i},{U^i}, \mathcal{L}(X_{s}^{i}),\mathcal{L}({U^i}))\cdot \nabla f(X_{s}^{i}),
\end{align}
where $a_{s}:=\frac{1}{2}\sigma\sigma^{T}$. Using $\Gamma^i$, similar to Section \ref{sec:set-Ran}, we can define the sets of randomized policies $L^{N},\LEXN,\LCOSN,\LPRS^{N},L,\LCO,\LEX,\LCOS$, and $\LPRS$.

In the following, we closely follow our approach in the previous section but we utilize the change of measure argument. We first show a similar result as that established in Lemma \ref{lem:2}, where we showed that the induced probability measure on the path space is continuous in randomized policies. The proof of the following lemma is provided in Appendix \ref{app:lem5}.

\begin{lemma}\label{lem:2-coup}
 Suppose that Assumption \ref{assump:existence2} 
 holds. Let $\mathbb{U}$ be compact. If $\{P_{\pi, n}\}_{n}\subseteq\LEXN$ (or $\subseteq\LEX$) converges to $P_{\pi, \infty}\in\LEXN$ (or $\in\LEX$), then, the sequence of measures on the state process $\{P^{i}_{0, n}\}_{n}$ for each DM$^i$ induced by $\{P_{\pi, n}\}_{n}$ converges weakly to $P^{i}_{0, \infty}$, which is the measure on the state process under $P_{\pi, \infty}$. 
\end{lemma}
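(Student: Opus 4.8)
\textbf{Proof proposal for Lemma \ref{lem:2-coup}.}

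The plan is to mimic the argument of Lemma \ref{lem:2}(ii), but working on the decoupled system \eqref{eq:mf-dynamic-dec} obtained after the Girsanov change of measure, so that the drift appearing in the martingale problem depends only on the law of a single agent's state and action (which are fixed along any fixed mean-field flow) rather than on the empirical measure. Since under $P_0^i$ the processes $\pmb{X^i}$ are decoupled and each satisfies the martingale equation \eqref{eq:martin-2} with generator \eqref{eq:Gen-coup}, it suffices to argue for a single representative coordinate $i$ and then use that the joint measure is a product. First I would fix $i$ and consider the sequence of randomized policies $\{P_{\pi,n}\}_n$; writing $P_{\pi,n}^i$ for the $i$-th marginal, exchangeability plus compactness of $\mathbb{U}$ and Lemma \ref{lem:2}(i)-type arguments give that the family $\{P^i_{0,n}\}_n$ of induced laws on $\mathcal{C}([0,T];\mathbb{X})$ is tight: Assumption \ref{assump:existence2}(ii) gives uniform bounds on the drift and diffusion, hence the Aldous/Kolmogorov moment criterion yields tightness on path space. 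So along a subsequence $P^i_{0,n}\Rightarrow Q^i$ for some limit law $Q^i$.

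The core step is then to identify $Q^i$ as the law $P^i_{0,\infty}$ of the state process under $P_{\pi,\infty}$, i.e.\ to pass to the limit in the martingale equation \eqref{eq:martin-2}. For a fixed $f\in C^2_b(\mathbb{X})$, fixed $0\le s\le t\le T$, and a bounded continuous test functional $\Phi$ of the path up to time $s$, one writes the martingale identity for each $n$ in the form
\begin{align*}
\mathbb{E}_{P_{\pi,n}}\!\left[\Phi\Big(f(X^i_t)-f(X^i_s)-\int_s^t \mathcal{A}^{U^i}f(X^i_r)\,\pmb{\gamma^i}(dU^i,dr)\Big)\right]=0,
\end{align*}
and one wants to take $n\to\infty$. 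The term $f(X^i_t)-f(X^i_s)$ passes to the limit by weak convergence on path space (using continuity of $f$ and the Skorokhod representation theorem to get a.s.\ convergence of paths). The genuinely delicate term is $\int_s^t \mathcal{A}^{U^i}f(X^i_r)\pmb{\gamma^i}(dU^i,dr)$, because the generator \eqref{eq:Gen-coup} involves $b_r(X^i_r,U^i,\mathcal{L}(X^i_r),\mathcal{L}(U^i))$, and $\mathcal{L}(X^i_r)$ and $\mathcal{L}(U^i_r)$ themselves depend on $n$ through the policy. Here I would argue in two stages: (a) the marginal flow $r\mapsto(\mathcal{L}_n(X^i_r),\mathcal{L}_n(U^i_r))$ converges (in $\mathcal{P}_2$, uniformly in $r$, using the uniform moment bounds and the Lipschitz estimates of Assumption \ref{assump:existence2}(i) together with a Grönwall argument) to $(\mathcal{L}_\infty(X^i_r),\mathcal{L}_\infty(U^i_r))$; and (b) with the flow pinned down, $(x,U^i,r)\mapsto \mathcal{A}^{U^i}f(x)$ becomes a bounded continuous integrand to which one applies joint weak convergence of the pair (path, relaxed control) — this is where the Young topology on $\Gamma^i$ and continuity of $b$ enter, exactly as in the decoupled case — combined with the generalized dominated convergence theorem of \cite{serfozo1982convergence} to handle the slowly-varying law arguments inside $b$. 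Putting these together, the limit satisfies the martingale equation \eqref{eq:martin-2} with the $P_{\pi,\infty}$-flow, and by the well-posedness of that martingale problem under Assumption \ref{assump:existence2} (uniqueness of the strong solution of \eqref{eq:mf-dynamic-dec}) we conclude $Q^i=P^i_{0,\infty}$. Since every subsequential limit is the same, the full sequence converges weakly, and taking products over the finitely (or, via Ionescu-Tulcea and finite-dimensional marginals, countably) many coordinates finishes the proof.

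The main obstacle I anticipate is step (a)–(b) above: decoupling the two sources of $n$-dependence inside $b$ — the random path $X^i_r$ (handled by weak convergence on $\mathcal{C}([0,T];\mathbb{X})$) and the deterministic but $n$-dependent marginal laws $(\mathcal{L}_n(X^i_r),\mathcal{L}_n(U^i_r))$ feeding the McKean–Vlasov drift (handled by a fixed-point/Grönwall stability estimate for the flow). One must be careful that the convergence of the marginal laws is established \emph{before} and \emph{independently of} identifying the limiting path law, to avoid circularity; the uniform-in-$n$ a priori bounds from Assumption \ref{assump:existence2}(ii) and the Lipschitz-in-Wasserstein bound from Assumption \ref{assump:existence2}(i) are exactly what make this self-contained. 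A secondary technical point is ensuring the Radon–Nikodym factor $\prod_i \widehat Z_N^i$ — which only appears in the cost, not the dynamics here — plays no role in this lemma; indeed the lemma is purely about the laws $P^i_{0,n}$ of the \emph{decoupled} reference dynamics \eqref{eq:mf-dynamic-dec}, so the Girsanov density is irrelevant at this stage and enters only later when analyzing the transformed cost \eqref{eq:MFT-cost-ind}.
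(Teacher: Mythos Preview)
Your overall architecture---tightness on path space, subsequential limit, passage to the limit in the martingale equation \eqref{eq:martin-2}, then uniqueness of the McKean--Vlasov problem to identify the limit---is exactly the paper's route. The difference lies in your step (a): you propose to establish $\mathcal{L}_n(X^i_r)\to\mathcal{L}_\infty(X^i_r)$ \emph{separately and beforehand} via a Gr\"onwall stability argument, and then feed the $\mathcal{L}_\infty$-flow into the limiting generator. The paper does not do this, and you do not need to either. Once tightness (via Aldous' criterion, using the uniform bounds in Assumption~\ref{assump:existence2}(ii)) yields a subsequential limit $Q^i$ on $\mathcal{C}([0,T];\mathbb{X})$ and Skorokhod gives a.s.\ path convergence $X^i_{\cdot,n}\to X^i_\cdot$, the time-marginals converge automatically: $\mathcal{L}(X^i_{r,n})\to Q^i_r$ for each $r$. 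The generalized dominated convergence theorem of \cite{serfozo1982convergence} then passes the integrand in \eqref{eq:Gen-coup} to the limit with the law-argument equal to $Q^i_r$---the subsequential limit's \emph{own} marginal, not $\mathcal{L}_\infty(X^i_r)$ a priori. Thus $Q^i$ solves the McKean--Vlasov martingale problem under the limit policy $\pmb{\gamma^i}$, and well-posedness of \eqref{eq:mf-dynamic-dec} (via \cite[Theorem~4.21]{carmona2018probabilistic}) forces $Q^i=P^i_{0,\infty}$.

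This dissolves the circularity you were worried about: you are not proving flow convergence in advance, you are identifying a subsequential limit through a self-referential martingale problem and then invoking McKean--Vlasov uniqueness to close the loop. Your proposed step (a) could in principle be made to work---it amounts to a direct $\mathcal{W}_2$-stability estimate for McKean--Vlasov SDEs in terms of the relaxed control---but it is strictly harder, you do not provide the details, and it is not needed. The paper's shortcut is to let the law-dependence ride along with the Skorokhod a.s.\ convergence and handle it in one stroke with generalized dominated convergence, exactly as in Lemma~\ref{lem:2}(ii).
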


The main idea of the proof of Lemma \ref{lem:2-coup} is the same as Lemma \ref{lem:2} since under the change of measure argument, we show that the limiting state process satisfies the martingale equation in \eqref{eq:martin-2}. This is because, under the measure transformation, state processes are independent among DMs, i.e., $P_{0,n}:=\prod_{i=1}^{N}P_{0,n}^{i}$. This concludes the proof since the unique weak solution of the martingale equation in \eqref{eq:martin-2} characterizes the unique weak solution of \eqref{eq:mf-dynamic-dec} following the existence and uniqueness of a strong solution in \cite[Theorem 4.21]{carmona2018probabilistic} for McKean-Vlasov SDE of \eqref{eq:martin-2}.

In the following theorem, we establish the existence of a randomized globally optimal policy for \PN\ that is exchangeable. We first present a lemma on the $L^1$-continuity of the product of Radon-Nikodym derivative terms building on \cite[Lemma 2.5]{pradhan2023controlled} that will be used in our theorem below. We additionally note the result in \cite{charalambous2013dynamic}, where the existence of a globally optimal solution for a class of finite population teams is established using Girsanov’s change of measure argument. As an intermediate step, the authors showed that the Radon-Nikodym derivative is continuous as a function of policy under the weak*-topology (see \cite[Lemma 1]{charalambous2013dynamic}). Also, in \cite[Lemma 1]{charalambous2016decentralized}, the continuity of the Radon-Nikodym derivative terms in policies is established using a different topology from that introduced here, and $L^2$ convergence of the policies, where the authors assumed that information structure is defined such that external noise processes do not corrupt the observations. Here, our goal is to establish the existence of an optimal solution and show that it is exchangeable, where we use $L^1$-continuity of the product of Radon-Nikodym derivative terms. The proof of the following lemma is essentially from \cite[Lemma 2.5]{pradhan2023controlled} and is provided in Appendix \ref{app:lem-con} for completeness.

\begin{lemma}\label{lem:contZ}
     Consider \PN\ with coupled dynamics \eqref{eq:st-dy-C}. Let Assumption \ref{assump:existence2} and Assumption \ref{assum:5-4} hold.  Suppose that $\{\pmb{\gamma^{i}_n}(\omega)\}_{n}$ together with its induced distribution on states $\{\pmb{X^i_{n}}(\omega)\}_n$ converge weakly to $\pmb{\gamma^{i}}(\omega)$ and the distribution of $\pmb{X^i}(\omega)$ almost surely (in $\omega$) as $n\to \infty$. Let
$Z^{i,n}_{N}:={\sf exp}(A_n^i-B_n^i)$ and $Z^{i,\infty}_{N}:={\sf exp}(A^i-B^i)$ with
\begin{align}
    A_{n}^{i}&=\int_{0}^{T} \sigma^{-1}_t(X_{t,n}^{i})\int_{\mathbb{U}}\overline{b_{t}}\left(X_{t,n}^i,U_{t}^i, \frac{1}{N}\sum_{p=1}^{N} \delta_{X^{p}_{t,n}},\frac{1}{N}\sum_{p=1}^{N} \delta_{U^{p}_t}\right)\prod_{i=1}^{N}\pmb{\gamma^i_n}(d{U^i}|t)dW_{t}^{i}\label{eq:A-B1}\\
    B_{n}^{i}&=\frac{1}{2}\int_{0}^{T} \left|\sigma^{-1}_t(X_{t,n}^{i})\int_{\mathbb{U}}\overline{b_{t}}\left(X_{t,n}^i,U_{t}^i, \frac{1}{N}\sum_{p=1}^{N} \delta_{X^{p}_{t,n}},\frac{1}{N}\sum_{p=1}^{N} \delta_{U^{p}_t}\right)\prod_{i=1}^{N}\pmb{\gamma^i_n}(d{U^i}|t)\right|^2dt\label{eq:A-B2}\\
    A^{i}&=\int_{0}^{T} \sigma^{-1}_t(X_{t}^{i})\int_{\mathbb{U}}\overline{b_{t}}\left(X_{t}^i,U_{t}^i, \frac{1}{N}\sum_{p=1}^{N} \delta_{X^{p}_{t}},\frac{1}{N}\sum_{p=1}^{N} \delta_{U^{p}_t}\right)\prod_{i=1}^{N}\pmb{\gamma^i}(d{U^i}|t)dW_{t}^{i}\label{eq:A-B3}\\
    B^i&=\frac{1}{2}\int_{0}^{T} \left|\sigma^{-1}_t(X_{t}^{i})\int_{\mathbb{U}}\overline{b_{t}}\left(X_{t}^i,U_{t}^i, \frac{1}{N}\sum_{p=1}^{N} \delta_{X^{p}_{t}},\frac{1}{N}\sum_{p=1}^{N} \delta_{U^{p}_t}\right)\prod_{i=1}^{N}\pmb{\gamma^i}(d{U^i}|t)\right|^2dt\label{eq:A-B4}.
\end{align}
Then, $\prod_{i=1}^{N}Z^{i,n}_{N}$ converges in $L^1(\mathbb{P})$ to $\prod_{i=1}^{N}Z^{i,\infty}_{N}$.
\end{lemma}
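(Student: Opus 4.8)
The plan is to show that $\prod_{i=1}^N Z_N^{i,n} \to \prod_{i=1}^N Z_N^{i,\infty}$ in $L^1(\mathbb{P})$ by first establishing almost-sure convergence along a subsequence and then upgrading to $L^1$-convergence via a uniform integrability argument. First I would focus on a single factor and analyze the exponents $A_n^i - B_n^i$. Under Assumption \ref{assump:existence2}, $\sigma_t^{-1}$ is bounded and continuous and $\overline{b_t}$ is bounded and continuous in all its arguments (by the Lipschitz/boundedness hypotheses and Assumption \ref{assum:5-4}); moreover the coupling terms $\frac1N\sum_p \delta_{X^p_{t,n}}$, $\frac1N\sum_p\delta_{U^p_t}$ and the mean-field terms $\mu_t^X,\mu_t^U$ entering $\overline{b_t}$ are controlled since the states $\pmb{X^i_n}(\omega)$ converge weakly (a.s. in $\omega$) and the relaxed controls $\pmb{\gamma^i_n}(\omega)$ converge weakly. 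The key step for the $B_n^i$ term is that it is a continuous functional of $(\pmb{X^i_n}, \pmb{\gamma^i_n})$ in the Young/weak topology: the integrand $\bigl|\sigma_t^{-1}(X^i_{t,n})\int_{\mathbb{U}}\overline{b_t}(\cdots)\pmb{\gamma^i_n}(dU^i|t)\bigr|^2$ converges pointwise in $t$ (using joint continuity and the convergence of the empirical measures) and is uniformly bounded, so dominated convergence gives $B_n^i \to B^i$ a.s. This is exactly the structure exploited in \cite[Lemma 2.5]{pradhan2023controlled}.

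The $A_n^i$ (stochastic-integral) term is the more delicate one, since weak convergence of the integrands does not immediately pass through an Itô integral; the standard device, following \cite[Lemma 2.5]{pradhan2023controlled}, is to use that the Wiener process $W^i$ is fixed (it is the same exogenous noise across $n$, and $\pmb{\gamma^i_n}(\omega)$ is built on the same probability space) together with an approximation of the integrand by simple/piecewise-constant processes, or to invoke an Itô-isometry bound: $\mathbb{E}\bigl[|A_n^i - A^i|^2\bigr] \le \mathbb{E}\bigl[\int_0^T |\sigma_t^{-1}(X^i_{t,n})\int\overline{b_t}(\cdots_n)\pmb{\gamma^i_n} - \sigma_t^{-1}(X^i_t)\int\overline{b_t}(\cdots)\pmb{\gamma^i}|^2 dt\bigr]$, and the right-hand side goes to $0$ by the same pointwise-plus-bounded convergence argument used for $B_n^i$, now combined with dominated convergence in $\omega$ as well. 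Hence $A_n^i \to A^i$ in $L^2(\mathbb{P})$, so along a subsequence $A_n^i - B_n^i \to A^i - B^i$ a.s., and by continuity of $\exp$, $Z_N^{i,n} \to Z_N^{i,\infty}$ a.s. along that subsequence, and therefore the product $\prod_{i=1}^N Z_N^{i,n} \to \prod_{i=1}^N Z_N^{i,\infty}$ a.s.

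To pass from a.s.\ convergence to $L^1$ convergence I would use the Vitali/Scheffé route: each $Z_N^{i,\infty}$ (and each $Z_N^{i,n}$) is a genuine exponential martingale density with $\mathbb{E}[Z_N^{i,n}] = 1$ for every $n$ (Novikov's condition \eqref{eq:novikov} holds uniformly over admissible policies by the boundedness in Assumption \ref{assump:existence2}), and by independence of the state processes across DMs under $P_0$ we get $\mathbb{E}\bigl[\prod_{i=1}^N Z_N^{i,n}\bigr] = 1 = \mathbb{E}\bigl[\prod_{i=1}^N Z_N^{i,\infty}\bigr]$. Combining a.s.\ convergence of the nonnegative random variables $\prod_i Z_N^{i,n}$ with convergence of their (constant) expectations to the expectation of the limit, Scheffé's lemma yields $L^1$ convergence of the product. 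Finally, since the limit is independent of the chosen subsequence (it is always $\prod_i Z_N^{i,\infty}$), a standard subsequence argument upgrades this to convergence of the full sequence. The main obstacle is the stochastic-integral term $A_n^i$: one must carefully justify interchanging the weak (Young-topology) limit of the relaxed controls with the Itô integral against $W^i$, which is where the fact that $W^i$ is held fixed and that the integrands are uniformly bounded (so Itô isometry plus dominated convergence apply) is essential; the uniform-integrability/Scheffé step and the reduction to $B_n^i$ are comparatively routine.
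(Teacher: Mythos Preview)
Your approach via Scheff\'e's lemma is genuinely different from the paper's and, once a gap is patched, is valid and arguably cleaner. The paper proceeds by a direct quantitative bound: it uses $|e^x-e^y|\le |x-y|\max(e^x,e^y)$, then Cauchy--Schwarz to split $\bigl(\mathbb{E}|\prod_i Z_N^{i,n}-\prod_i Z_N^{i,\infty}|\bigr)^2$ into $\mathbb{E}\bigl[|\sum_i(A_n^i-A^i)-(B_n^i-B^i)|^2\bigr]$ (driven to zero via It\^o isometry and dominated convergence, exactly as you handle the $A$ and $B$ terms) times a uniform bound $\mathbb{E}[\exp(2\sum_i(A_n^i-B_n^i))]\le e^{2NK}$ coming from boundedness of the integrands. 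No subsequence extraction or Scheff\'e is needed. Your route trades this explicit control for a softer argument requiring only a.s.\ convergence along a subsequence plus constancy of the $L^1$ norms; the paper's approach gives more quantitative information but at the cost of the extra exponential-moment estimate.

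The gap in your argument is the justification of $\mathbb{E}\bigl[\prod_{i=1}^N Z_N^{i,n}\bigr]=1$. You invoke independence of the state processes under $P_0$ to factor the expectation, but the factors $Z_N^{i,n}$ are \emph{not} independent: each one depends on the full empirical measure $\tfrac1N\sum_p\delta_{X^p_{t,n}}$, hence on all $N$ state trajectories simultaneously, so $\mathbb{E}[\prod_i Z_N^{i,n}]\neq\prod_i\mathbb{E}[Z_N^{i,n}]$ in general. The conclusion is nonetheless correct, for a different reason: writing $\prod_i Z_N^{i,n}=\exp\bigl(\sum_i A_n^i-\sum_i B_n^i\bigr)$ and noting that $\sum_i A_n^i=\sum_i\int_0^T h_n^i(t)\,dW_t^i$ has quadratic variation $\sum_i\int_0^T|h_n^i|^2\,dt=2\sum_i B_n^i$ (by independence of the $W^i$), the product is itself a single Dol\'eans--Dade exponential $\mathcal{E}\bigl(\sum_i\int h_n^i\,dW^i\bigr)_T$ with uniformly bounded integrands, and Novikov's condition applies directly to it. With this correction your Scheff\'e argument goes through.
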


The proof of the following theorem is provided in Appendix \ref{app:the8}.

\begin{theorem}\label{the:5-coup}
     Consider \PN\ with coupled dynamics \eqref{eq:st-dy-C}. Let Assumption \ref{assump:existence2}, Assumption \ref{assump1}, and Assumption \ref{assum:5-4} hold.  Suppose that $\mathbb{U}$ is compact. Then, there exists an exchangeable randomized globally optimal policy ${P_{\pi}^{N\star}}$ for \PN\ among all randomized policies $L^{N}$, i.e.,   $P_{\pi}^{N\star}\in \LEXN$.
\end{theorem}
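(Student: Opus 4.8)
The plan is to mirror the strategy that was used for Theorem \ref{the:3} (and summarized after Theorem \ref{the:5}), but now carrying along the extra Radon--Nikodym factor $\prod_{i=1}^{N}\widehat Z_N^i$ that the change of measure produces. Concretely, under the measure transformation described around \eqref{eq:mf-dynamic-dec}--\eqref{eq:MFT-cost-ind}, the coupled problem \PN\ becomes a team with \emph{decoupled} state dynamics \eqref{eq:mf-dynamic-dec} (so the states $\pmb{X^i}$ are independent under $P_0=\prod_i P_0^i$, with the McKean--Vlasov consistency built into the generator \eqref{eq:Gen-coup}) and a modified running cost given by $\widehat c$ multiplied by $\prod_{i=1}^{N}\widehat Z_N^i$. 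The expected cost $J_N^\pi(P_\pi^N)$ is then an integral, over the randomized policy $P_\pi^N\in L^N$, of the quantity
\begin{align*}
\int\Big(\int\!\!\int_0^T \widehat c\big(X_t^{1:N},U_t^{1:N},\tfrac1N\sum_p\delta_{X_t^p},\tfrac1N\sum_p\delta_{U_t^p}\big)\textstyle\prod_{i=1}^N\widehat Z_N^i\,\prod_{i=1}^N\pmb{\gamma^i}(dU^i,dt)\Big)\prod_{i=1}^N P_0^i(d\pmb{X^i};\pmb{\gamma^i})\,.
\end{align*}

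First I would record the structural facts that make the argument go through: $L^N=\mathcal P(\prod_i\Gamma^i)$ is convex, and $J_N^\pi$ is affine (linear) in $P_\pi^N$; by Lemma \ref{lem:2-coup}, $P_\pi^N\mapsto (P_0^i)_{i}$ is weakly continuous; by Lemma \ref{lem:contZ}, along a convergent sequence of policies the product $\prod_i Z_N^{i,n}$ converges in $L^1(\mathbb P)$ to the limiting product; and $\widehat c$ is bounded and continuous by Assumption \ref{assum:5-4} (together with the standing uniform boundedness of $b_t,\sigma_t$ in Assumption \ref{assump:existence2}). Second, I would establish continuity (in fact it suffices to have lower semicontinuity, but continuity is available here) of $J_N^\pi$ on $\LEXN$: take $P_{\pi,n}^N\to P_{\pi,\infty}^N$ in $\LEXN$; using Lemma \ref{lem:2-coup} to get weak convergence of the induced state laws, Lemma \ref{lem:contZ} to control the Radon--Nikodym factor in $L^1$, and the continuity/boundedness of $\widehat c$, invoke a generalized dominated convergence theorem (as in \cite{serfozo1982convergence}, used the same way in the proof of Theorem \ref{the:4}) to pass to the limit inside all the integrals, including the outer integral against the policy measure. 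Third, since by Lemma \ref{lem:2}(i) (its coupled analogue obtained from the same construction) $\LEXN$ is convex and compact when $\mathbb U$ is compact, the Weierstrass theorem yields a minimizer $P_\pi^{N\star}\in\LEXN$ of $J_N^\pi$ over $\LEXN$.

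Finally I would argue that restricting to $\LEXN$ is without loss of optimality over all of $L^N$. Here I use exchangeability of $\widehat c$ (Assumption \ref{assump1}) together with the symmetry of the dynamics \eqref{eq:mf-dynamic-dec} and of the Radon--Nikodym factor $\prod_i\widehat Z_N^i$ under permutations of the DM indices: for any $\sigma\in S_N$ and any $P_\pi^N\in L^N$, the permuted policy $P_\pi^N\circ\sigma$ achieves the same cost. Averaging $P_\pi^N$ uniformly over all $\sigma\in S_N$ produces a policy $\overline{P}_\pi^N:=\frac{1}{N!}\sum_{\sigma\in S_N}P_\pi^N\circ\sigma$ which lies in $\LEXN$ (it is permutation invariant by construction) and, by the linearity of $J_N^\pi$ in the policy, satisfies $J_N^\pi(\overline{P}_\pi^N)=\frac{1}{N!}\sum_\sigma J_N^\pi(P_\pi^N\circ\sigma)=J_N^\pi(P_\pi^N)$. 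Hence $\inf_{L^N}J_N^\pi=\inf_{\LEXN}J_N^\pi$, and the minimizer from the previous step is globally optimal over $L^N$. The main obstacle — and the only place the coupled case genuinely differs from Theorem \ref{the:5} — is the continuity-of-cost step: one must be careful that the $L^1(\mathbb P)$ convergence of $\prod_i Z_N^{i,n}$ from Lemma \ref{lem:contZ} combines correctly with the weak convergence of state laws and the convergence of the relaxed controls to justify interchanging limit and integral in the \emph{product} $\widehat c\cdot\prod_i\widehat Z_N^i$ (a product of a bounded-continuous term and an $L^1$-convergent, non-uniformly-bounded term), which is exactly why the generalized dominated convergence theorem, rather than ordinary dominated convergence, is needed; verifying its hypotheses (a dominating sequence converging in $L^1$) is the technical heart of the proof.
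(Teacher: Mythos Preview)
Your proposal is correct and follows essentially the same route as the paper: establish continuity of $J_N^\pi$ on $\LEXN$ via Lemma~\ref{lem:2-coup} and Lemma~\ref{lem:contZ}, invoke compactness of $\LEXN$ for existence, and then use the permutation-averaging argument (Steps 2--3 of the proof of Theorem~\ref{the:5}) to show the restriction to $\LEXN$ is without loss. One small point: the paper makes the continuity step concrete not by exhibiting a dominating sequence for the generalized DCT but by an add--and--subtract trick, splitting $\big|\mathbb E[\prod_i Z_N^{i,n}\cdot(\cdots)_n]-\mathbb E[\prod_i Z_N^{i,\infty}\cdot(\cdots)_\infty]\big|$ into a term with the limiting $Z$-product (handled by generalized DCT on the bounded continuous integrand) and a term bounded by $M\cdot\mathbb E\big[|\prod_i Z_N^{i,n}-\prod_i Z_N^{i,\infty}|\big]$ (handled by Lemma~\ref{lem:contZ}); note this uses uniform boundedness of $\widehat c$, which Assumption~\ref{assum:5-4} alone does not give you.
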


The proof of Theorem \ref{the:5-coup} is essentially the same as the proof of Theorem \ref{the:5} except that we additionally need to show $L^1$-continuity of the product of Radon-Nikodym derivative terms in policies. We establish this in Lemma \ref{lem:contZ} using the generalized dominated convergence theorem and It\^o isometry following a similar technique as that used in \cite[Lemma 2.5]{pradhan2023controlled}. {We note that in contrast to the decoupled setting, for the coupled case, we cannot derive HJB equations for \PN\ since states and randomization in policies are correlated among DMs. Also, we cannot view the entire finite population as a single DM viewing a classical stochastic control problem since the information structure is decentralized, and hence, the solution of the corresponding HJB equations will not be admissible under the decentralized information structure.}

In the following key lemma, we show that to study the limit of the optimal expected cost for \PN\ within $N$-exchangeable policies, we can without any loss restrict our search to infinitely exchangeable policies. The proof of the following lemma is provided in Appendix \ref{app:lem6}. Our result uses the following assumption.

\begin{assumption}\label{eq:continuity-RD}
    For any sequence $\{\pmb{\gamma}^{1:N}_{N}\}$ of $N$-exchangeable random variables (policies) that converge in distribution, $\mathbb{E}\left[\left|\prod_{i=1}^{N}Z^{i,N}_{N}-1\right|\right]$ converges to $0$ as $N\to \infty$, where
\begin{align}
Z^{i,N}_{N}&:={\sf exp}\bigg(\int_{0}^{T} \sigma^{-1}_t(X_{t,N}^{i})\overline{b_{t}}\left(X_{t,N}^i,U_{t}^i, \frac{1}{N}\sum_{p=1}^{N} \delta_{X^{p}_{t,N}},\frac{1}{N}\sum_{p=1}^{N} \delta_{U^{p}_t}\right)\prod_{i-1}^{N}\gamma^{i}_{N}(dU^i|t)dW_{t}^{i}\nonumber\\
    &-\frac{1}{2}\int_{0}^{T} \left|\sigma^{-1}_t(X_{t,N}^{i})\overline{b_{t}}\left(X_{t,N}^i,U_{t}^i, \frac{1}{N}\sum_{p=1}^{N} \delta_{X^{p}_{t,N}},\frac{1}{N}\sum_{p=1}^{N} \delta_{U^{p}_t}\right)\prod_{i-1}^{N}\gamma^{i}_{N}(dU^i|t)\right|^2dt\bigg)\label{eq:ZNN},
\end{align}
with $\overline{b_{t}}$ as in \eqref{eq:overb}.
\end{assumption}

\begin{lemma}\label{lem:3-coup}
    Consider \PN\ and \PIN\ with coupled dynamics \eqref{eq:st-dy-C}. Let Assumption \ref{assump:existence2}, Assumption \ref{assump1}, Assumption \ref{assum:5-4}, Assumption \ref{eq:continuity-RD} hold.  Suppose that $\mathbb{U}$ is compact. Suppose further that $c(\cdot, \ldots, \cdot)$ is uniformly bounded. Then,
\begin{align*}
    &\limsup_{N\to \infty}\inf_{P_{\pi}^{N}\in \LEXN}\int \left(\widehat{c}_{N}(\pmb{X}^{1:N},\pmb{\gamma^{1:N}})\prod_{i=1}^{N}P^i_{0,N}(d\pmb{X^{i}};\pmb{\gamma^{i}})Z_{N}^{i,N}\right) P_{\pi}^{N}(d\pmb{\gamma^{1:N}})\\
    &=\limsup_{N\to \infty}\inf_{P_{\pi}\in \LEX}\int \left(\widehat{c}_{N}(\pmb{X}^{1:N},\pmb{\gamma^{1:N}})\prod_{i=1}^{N}P^i_{0,N}(d\pmb{X^{i}};\pmb{\gamma^{i}})Z_{N}^{i,N}\right)  P_{\pi,N}(d\pmb{\gamma^{1:N}}),
\end{align*}
where $P_{\pi,N}$ is the marginal of the $P_{\pi}\in \LEX$ to the first $N$ components.
\end{lemma}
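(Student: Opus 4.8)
The plan is to mimic the proof of Lemma \ref{lem:3} (the decoupled analogue), adapting each step to accommodate the Radon--Nikodym weights $Z_N^{i,N}$ via the change-of-measure machinery set up above. Since $\LEX$ restricted to the first $N$ components, $\LEX|_N$, is a subset of $\LEXN$, the inequality "$\geq$" between the two $\limsup$'s is immediate (the infimum over the larger set $\LEXN$ is no larger). The whole content is therefore the reverse inequality: given, for each $N$, an $N$-exchangeable policy $P_\pi^{N}$ that is (nearly) optimal for the $N$-th problem, we must produce a single infinitely-exchangeable policy $P_\pi\in\LEX$ whose first-$N$-marginal cost is asymptotically no larger than that of $P_\pi^{N}$.

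First I would invoke the Diaconis--Friedman approximation theorem (Theorem \ref{the:dia} in the appendix, via \cite{diaconis1980finite}): for each $N$, the $N$-exchangeable law $P_\pi^{N}$ on $\prod_{i=1}^N\Gamma^i$ can be approximated (in a suitable total-variation / Wasserstein sense on a fixed finite block of coordinates) by an \emph{infinitely}-exchangeable law $\widetilde P_\pi^{N}\in\LEX$, with an error that vanishes as $N\to\infty$ when measured on any fixed number of coordinates; here $\Gamma^i$ is compact (Lemma \ref{lem:2} / Lemma \ref{lem:2-coup} give compactness of the relevant policy sets, and $\mathbb U$ compact). Then, along a subsequence, extract a weak limit of $\{\widetilde P_\pi^{N}\}$ in $\LEX$ (which is compact), and use Aldous's result (Theorem \ref{the:ald}, \cite{aldous2006ecole}) that the empirical measures of the converging exchangeable policies converge to the directing (de Finetti) measure of the limit. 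Now I would push this convergence through the cost functional. By Lemma \ref{lem:2-coup}, the induced state-laws $P_{0,N}^i$ depend continuously on the policy; by Lemma \ref{lem:contZ} together with Assumption \ref{eq:continuity-RD}, the product of Radon--Nikodym weights $\prod_{i=1}^N Z_N^{i,N}$ converges in $L^1(\mathbb P)$ (and Assumption \ref{eq:continuity-RD} controls the $N$-dependent drift-mismatch term uniformly as $N\to\infty$). Since $\widehat c$ is bounded and continuous (Assumption \ref{assum:5-4}) and $c$ is assumed uniformly bounded, the generalized dominated convergence theorem \cite{serfozo1982convergence} yields that the integrated cost under $\widetilde P_\pi^{N}$ differs from that under $P_\pi^{N}$ by $o(1)$, and that the cost is jointly continuous in the relevant limits. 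Hence $\limsup_N$ of the $\LEX$-restricted infimum is $\leq \limsup_N$ of the $\LEXN$-infimum, giving the reverse inequality and completing the proof.

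The main obstacle I anticipate is exactly the interaction between the finite-to-infinite exchangeable approximation and the weight terms $Z_N^{i,N}$: the Diaconis--Friedman perturbation changes the joint law of the policies on \emph{all} coordinates, and since $\overline{b_t}$ in \eqref{eq:overb} compares the empirical measure $\frac1N\sum_p\delta_{X^p_t}$ with its limit law $\mu^X_t$, the weights couple all $N$ agents simultaneously; controlling $\mathbb E\big[\big|\prod_{i=1}^N Z_N^{i,N}-1\big|\big]$ under a policy that has been perturbed on every coordinate is not a direct consequence of Lemma \ref{lem:contZ} (which fixes $N$ and varies $n$). This is precisely what Assumption \ref{eq:continuity-RD} is imposed to handle: it guarantees the required $L^1$-smallness for \emph{any} sequence of $N$-exchangeable policies converging in distribution, which is the regime produced by the Diaconis--Friedman step. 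The remaining delicate point is to ensure the state-law continuity (Lemma \ref{lem:2-coup}) and the weight convergence (Lemma \ref{lem:contZ}, Assumption \ref{eq:continuity-RD}) can be applied \emph{simultaneously} along one common subsequence; this is arranged by first passing to a subsequence realizing the $\limsup$ on the right-hand side, then a further subsequence along which the approximating $\widetilde P_\pi^{N}$ and their empirical measures converge, using compactness of $\LEX$ and of the state-path spaces throughout.
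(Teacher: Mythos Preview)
Your proposal is correct and follows essentially the same route as the paper: Diaconis--Friedman approximation of the $N$-exchangeable optima by infinitely-exchangeable laws, extraction of a convergent subsequence in the compact set $\LEX$, Aldous's theorem for convergence of the empirical measures, and then passage to the limit in the cost via generalized dominated convergence, with Assumption~\ref{eq:continuity-RD} handling the Radon--Nikodym weights. Two small corrections: (i) you have the trivial direction reversed---since $\LEX|_N\subseteq\LEXN$, what is immediate is ``$\leq$'' (infimum over the larger set is smaller), and the content is ``$\geq$'', which is indeed what your described construction establishes; (ii) Lemma~\ref{lem:contZ} is not used here (it fixes $N$ and varies $n$)---the paper instead writes $\big(\prod_i Z_N^{i,N}\big)\widehat c_N=\big(\prod_i Z_N^{i,N}-1\big)\widehat c_N+\widehat c_N$, kills the first summand directly via Assumption~\ref{eq:continuity-RD} and the uniform bound on $\widehat c$, and observes that the second summand is exactly the decoupled integrand already treated in Lemma~\ref{lem:3}.
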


Lemma \ref{lem:3-coup} similar to its counterpart Lemma \ref{lem:3} for teams with decoupled dynamics, allows us to focus our convergence analysis on a sequence of infinitely exchangeable policies. In Lemma \ref{lem:3}, the probability measures on the path of states are independent among DMs, and hence, we do not need to use the change of measure argument. In Lemma \ref{lem:3-coup}, the dynamics are coupled and hence, we used the change of measure argument to make probability measures on the path of states independent among DMs. As a result, our analysis of the optimal expected cost requires Assumption \ref{eq:continuity-RD} which implies $L^1$-continuity of the product of Radon-Nikodym derivative terms in \eqref{eq:ZNN} as $N\to \infty$. We note that in the proof of Theorem \ref{the:5-coup}, we established that the finite product of Radon-Nikodym derivative terms is $L^1$-continuous; however, establishing continuity as the number of DMs drives to infinity requires additional conditions. 

In the following theorem, we establish the existence of a randomized globally optimal policy for \PIN\ that is symmetric and privately randomized. The proof of the following theorem is provided in Appendix \ref{app:the9}.

\begin{theorem}\label{the:6-coup}
    Consider \PIN\ with coupled dynamics \eqref{eq:st-dy-C}. Let Assumption \ref{assump:existence2}, Assumption \ref{assump1}, Assumption \ref{assum:5-4}, and Assumption \ref{eq:continuity-RD} hold.  Suppose that $\mathbb{U}$ is compact. Then,  there exists a globally optimal policy $P_{\pi}^{\star}$ for \PIN\ among all randomized policies $L$ that is symmetric and privately randomized, i.e.,   $P_{\pi}^{\star}\in \LPRS$.
    
\end{theorem}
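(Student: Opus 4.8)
The plan is to run the argument of Theorem~\ref{the:6} while carrying the Radon--Nikodym weights through every estimate, using the change-of-measure representation \eqref{eq:mf-dynamic-dec}--\eqref{eq:MFT-cost-ind} so that the states become independent among DMs. Write $V_N:=\inf_{P_\pi^N\in L^N}J_N^\pi(P_\pi^N)$. By Theorem~\ref{the:5-coup} this infimum is attained by some exchangeable $P_\pi^{N\star}\in\LEXN$, and the symmetrization step in that theorem gives $V_N=\inf_{P_\pi^N\in\LEXN}J_N^\pi(P_\pi^N)$. Lemma~\ref{lem:3-coup} then yields $\limsup_N V_N=\limsup_N\inf_{P_\pi\in\LEX}J_N^\pi(P_{\pi,N})$. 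By Lemma~\ref{lem:0}(i), $\LEX=\LCOS$, and restricting a policy in $\LCOS$ to its first $N$ coordinates lands it in $\LCOSN$; since $P_\pi\mapsto J_N^\pi(P_{\pi,N})$ is linear (an integral against $P_\pi$) and $\LCOSN$ is convex with extreme points contained in $\LPRS^N$, the infimum over $\LCOSN$ reduces to the infimum over $\LPRS^N$. Hence $\limsup_N V_N=\limsup_N\inf_{\widetilde P_\pi\in\mathcal P(\Gamma^i)}J_N^\pi\big(\prod_{i=1}^N\widetilde P_\pi\big)$; it suffices to work with symmetric, privately randomized policies.

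Next I would extract a limiting policy. Compactness of $\mathbb U$ and the Young topology make $\mathcal P(\Gamma^i)$ tight (as in Lemma~\ref{lem:2} and Lemma~\ref{lem:2-coup}), so choosing near-minimizers $\widetilde P_\pi^{N}$ and passing to a subsequence we obtain $\widetilde P_\pi^{N_k}\Rightarrow\widetilde P_\pi^{\star}$, and $P_\pi^{\star}:=\prod_{i\in\mathbb N}\widetilde P_\pi^{\star}\in\LPRS$. The core of the proof is the identity
\begin{align}\label{eq:proposal-conv}
\lim_{k\to\infty}J_{N_k}^\pi\Big(\prod_{i=1}^{N_k}\widetilde P_\pi^{N_k}\Big)=J^\pi(P_\pi^{\star}),
\end{align}
which I would establish as follows. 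Under $\prod_{i=1}^N\widetilde P_\pi$ and the reference measure $P_{0,N}=\prod_i P_{0,N}^i$, the states are i.i.d.; Lemma~\ref{lem:2-coup} gives weak continuity of $P_{0,N}^i$ in the policy, and a law-of-large-numbers (propagation of chaos) argument shows the empirical measures $\frac1N\sum_p\delta_{X_t^p}$ and $\frac1N\sum_p\delta_{U_t^p}$ converge (in $\mathcal W_2$, in probability) to $\mu_t^X=\mathcal L(X_t^{\sf R})$ and $\mu_t^U=\mathcal L(U_t^{\sf R})$ of the representative McKean--Vlasov system \eqref{eq:mf-dynamic-dec}. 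Assumption~\ref{eq:continuity-RD} forces $\mathbb E\big[\big|\prod_{i=1}^N Z_N^{i,N}-1\big|\big]\to0$, so the Radon--Nikodym weights drop out in the limit, while Lemma~\ref{lem:contZ} supplies the finite-$N$ $L^1$-continuity of $\prod_i Z_N^{i}$ in the policy needed to let $\widetilde P_\pi^{N_k}$ move to $\widetilde P_\pi^{\star}$ simultaneously with the empirical arguments stabilizing. Combining these with the continuity and boundedness of $\widehat c$ (Assumption~\ref{assum:5-4}) and the generalized dominated convergence theorem (as in \cite{serfozo1982convergence}) yields \eqref{eq:proposal-conv}, with both sides equal to the representative-agent cost $\mathbb E\big[\int_0^T\widehat c(X_t^{\sf R},U_t^{\sf R},\mu_t^X,\mu_t^U)\,dt\big]$.

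Finally I would close the chain of inequalities. Since the $\widetilde P_\pi^{N_k}$ are near-minimizers, \eqref{eq:proposal-conv} gives $J^\pi(P_\pi^{\star})\le\limsup_N V_N$; conversely, for every $P_\pi\in L$ we have $J^\pi(P_\pi)=\limsup_N J_N^\pi(P_{\pi,N})\ge\limsup_N V_N$, so $\inf_{P_\pi\in L}J^\pi(P_\pi)\ge\limsup_N V_N\ge J^\pi(P_\pi^{\star})\ge\inf_{P_\pi\in L}J^\pi(P_\pi)$. All quantities coincide and $P_\pi^{\star}\in\LPRS$ is globally optimal for \PIN. The main obstacle is the third step: unlike the decoupled case of Theorem~\ref{the:6}, the cost \eqref{eq:MFT-cost-ind} carries the product of exponential martingale weights $Z_N^{i,N}$, and one must control these uniformly in $N$ --- both their finite-$N$ $L^1$-continuity in the policy (Lemma~\ref{lem:contZ}, via It\^o isometry and generalized dominated convergence) and their vanishing fluctuation as $N\to\infty$ (Assumption~\ref{eq:continuity-RD}) --- simultaneously with the stabilization of the measure-valued empirical arguments appearing inside $\widehat c$ and inside $\overline{b_t}$ in \eqref{eq:overb}.
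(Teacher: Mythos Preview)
Your proposal is correct and follows essentially the same route as the paper's proof: reduce from $L^N$ to $\LEXN$ via Theorem~\ref{the:5-coup}, from $\LEXN$ to $\LEX$ via Lemma~\ref{lem:3-coup}, apply de Finetti (Lemma~\ref{lem:0}(i)) and the extreme-point argument to land in $\LPRS^N$, and then establish the convergence of the cost along a sequence of symmetric near-minimizers by splitting off the Radon--Nikodym product $\prod_i Z_N^{i,N}$ and killing it with Assumption~\ref{eq:continuity-RD}, after which the remaining term is handled exactly as in the decoupled case. One small redundancy: your invocation of Lemma~\ref{lem:contZ} in the limit step is not actually needed---Assumption~\ref{eq:continuity-RD} is already stated for \emph{any} sequence of $N$-exchangeable policies that converge in distribution, so it directly absorbs the simultaneous change of policy and population size without a separate finite-$N$ continuity argument; the paper accordingly uses only Assumption~\ref{eq:continuity-RD} at this point.
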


{Similar to the decoupled case, the existence of a globally optimal solution for \PIN\ that is symmetric and independently randomized, leads to a more efficient computation method for an optimal policy, by representing the infinite population MF problem as a single representative DM control problem with the empirical measure of i.i.d. states and actions are replaced with their limits. However, the corresponding HJB or master equations with consistency conditions for the representative DM would be more complicated because the dynamics are not decoupled.}

In the following theorem, we use the privately randomized symmetric optimal solution of \PIN\ to provide approximations for an optimal solution of \PN\ for large $N$. 

\begin{theorem}\label{the:7-coup}
    Consider \PN\ and \PIN. Let Assumption \ref{assump:existence2}, Assumption \ref{assump1}, Assumption \ref{assum:5-4}, and Assumption \ref{eq:continuity-RD} hold.  Suppose that $\mathbb{U}$ is compact. Then, there exists an $\epsilon_N$ that converges to $0$ as $N\to \infty$ such that a symmetric privately randomized globally optimal policy $P_{\pi}^{\star}\in \LPRS$ for \PIN\ is $\epsilon_N$-optimal for \PN, i.e., \eqref{eq:approx-opt} holds.
\end{theorem}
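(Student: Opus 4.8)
The plan is to derive Theorem \ref{the:7-coup} as a direct corollary of the convergence analysis underlying Lemma \ref{lem:3-coup} and Theorem \ref{the:6-coup}, exactly paralleling how Theorem \ref{the:7} follows from the proof of Theorem \ref{the:6}. First I would recall that, by Theorem \ref{the:5-coup}, for each $N$ there is an exchangeable optimal policy $P_\pi^{N\star}\in\LEXN$, so that
\begin{align*}
\inf_{P_\pi^N\in L^N}J_N^\pi(P_\pi^N)=\inf_{P_\pi^N\in\LEXN}J_N^\pi(P_\pi^N),
\end{align*}
and hence $\liminf_{N\to\infty}\inf_{P_\pi^N\in L^N}J_N^\pi(P_\pi^N)$ equals the quantity studied in Lemma \ref{lem:3-coup}. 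Then I would invoke Lemma \ref{lem:3-coup} to replace the infimum over $\LEXN$ by the infimum over the $N$-marginals of $\LEX=\LCOS$, and, using convexity of $\LCOSN$ together with linearity of the expected cost in the randomized policy, argue (as in the proof of Theorem \ref{the:6-coup}) that this infimum is attained on the extreme points, i.e.\ on $\LPRS^N$. Passing to the limit and using the de Finetti representation (Lemma \ref{lem:0}(i)) together with the continuity established via Lemma \ref{lem:2-coup} and Lemma \ref{lem:contZ}, the limit of the finite-$N$ optimal costs equals $J^\pi(P_\pi^\star)$ for the symmetric privately randomized optimal policy $P_\pi^\star\in\LPRS$ of \PIN\ produced by Theorem \ref{the:6-coup}.

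The core quantitative step is then to define
\begin{align*}
\epsilon_N:=\Big|\,J_N^\pi(P_{\pi,N}^\star)-\inf_{P_\pi^N\in L^N}J_N^\pi(P_\pi^N)\,\Big|
\end{align*}
and show $\epsilon_N\to 0$. The upper bound $J_N^\pi(P_{\pi,N}^\star)\to J^\pi(P_\pi^\star)$ follows because under the symmetric independently randomized policy $P_\pi^\star$ the states $\pmb{X^i}$ are (after the Girsanov change of measure of Section \ref{sec:coup}) i.i.d.\ with common law $P_0^i$, so $\widehat c_N$ evaluated at the empirical measures converges, by the law of large numbers for exchangeable sequences and Assumption \ref{eq:continuity-RD} controlling the product $\prod_i Z_N^{i,N}\to 1$ in $L^1$, to $\int\widehat c\big(X^{\sf R},U^{\sf R},\mu^X,\mu^U\big)$ for the representative DM; boundedness of $\widehat c$ (and of $c$) makes this a bounded-convergence argument. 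The matching lower bound $\liminf_N \inf_{P_\pi^N} J_N^\pi(P_\pi^N)\ge J^\pi(P_\pi^\star)$ is precisely the content established in the proof of Theorem \ref{the:6-coup}: any sequence of (infinitely) exchangeable near-optimal policies has a subsequence converging, with its empirical/directing measure, to an element of $\LCOS$ whose cost is no smaller than the optimal cost of \PIN, which by Theorem \ref{the:6-coup} is attained in $\LPRS$. Combining the two bounds gives $J_N^\pi(P_{\pi,N}^\star)-\epsilon_N\le \inf_{P_\pi^N\in L^N}J_N^\pi(P_\pi^N)$, which is \eqref{eq:approx-opt}.

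I expect the main obstacle to be the lower-bound direction in the coupled setting, namely controlling the interaction between the Radon--Nikodym weights $\prod_{i=1}^N Z_N^{i,N}$ and the convergence of the empirical measures of $(\pmb{X^i},\pmb{\gamma^i})$ as $N\to\infty$. Unlike the decoupled case of Theorem \ref{the:7}, here the change of measure couples every coordinate, so one must ensure that the weak convergence of the directing random measure of the exchangeable policies, combined with Lemma \ref{lem:2-coup} (continuity of the induced state law in the policy) and Lemma \ref{lem:contZ} ($L^1$-continuity of the finite product of densities), survives the simultaneous limit in $N$; this is exactly why Assumption \ref{eq:continuity-RD} is imposed, and the proof should reduce the whole difficulty to invoking that assumption together with the generalized dominated convergence theorem of \cite{serfozo1982convergence}. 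A secondary technical point is to make the extreme-point argument rigorous: one needs that $\LCOSN$ is convex and compact with extreme points contained in $\LPRS^N$ (as noted in Section \ref{sec:set-Ran} via \cite{saldi2022geometry,YukselSaldiSICON17}), and that the linear functional $P_\pi\mapsto J_N^\pi(P_\pi)$ attains its infimum over $\LCOSN$ at an extreme point, which is standard once continuity and compactness (Lemma \ref{lem:2-coup}) are in hand.
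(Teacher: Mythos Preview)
Your proposal is correct and follows essentially the same approach as the paper: the paper states that Theorem \ref{the:7-coup} follows by analogous reasoning to Theorem \ref{the:7}, whose proof (Appendix \ref{APP:pfthe7}) proceeds exactly as you outline---restrict to $\LEXN$ via Theorem \ref{the:5-coup}, pass to $\LEX|_N=\LCOSN$ via Lemma \ref{lem:3-coup} and Lemma \ref{lem:0}(i) with an error $\widehat{\epsilon_N}\to 0$, drop to $\LPRS^N$ by the extreme-point/linearity argument, and then use the convergence analysis of Theorem \ref{the:6-coup} (with Assumption \ref{eq:continuity-RD} handling the product of Radon--Nikodym derivatives) to get a second error $\widetilde{\epsilon_N}\to 0$, setting $\epsilon_N=\widehat{\epsilon_N}+\widetilde{\epsilon_N}$. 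Your identification of the main obstacle and its resolution via Assumption \ref{eq:continuity-RD} matches the paper's reasoning precisely.
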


The proof of Theorem \ref{the:7-coup} follows from analogous reasoning as that used in the proof of Theorem \ref{the:7}. 

\section{Conclusion}
We have studied a class of continuous-time stochastic teams with a finite number of DMs and their MF limit with an infinite number of DMs. We have established the existence of a globally optimal solution for $N$-DM convex teams and have shown that this globally optimal solution is  Markovian and symmetric. For the convex MF
teams with an infinite number of DM, we have established the existence of a possibly randomized globally optimal solution, and
have shown that it is Markovian and symmetric. For the non-convex team with a finite population of DMs, we have established the existence of an exchangeable randomized optimal policy. As the number of DMs drives to infinity, we show the optimality of a privately randomized symmetric optimal policy for the MF team. Finally, we have shown that a privately randomized symmetric optimal policy for the MF team is approximately optimal for the corresponding exchangeable team with a finite but large number of DMs.


\appendix

\section{An Example for Assumption \ref{assump:convex}}\label{sec:examples}
In the following, we provide an example under which Assumption \ref{assump:convex} holds. Consider \PN\ with linear dynamics
  \begin{align}\label{eq:lin-lqg}
    dX_{t}^i = [AX_{t}^i + B U_{t}^i] dt + \sigma dW_{t}^{i}, \quad t\in \mathbb{T}, \quad i\in \mathcal{N},
\end{align}
where $A,B,\sigma$ are bounded, and cost is quadratic 
\begin{align*}
c\left(X_{t}^{1:N}, U_{t}^{1:N}\right):=&\begin{bmatrix}
 (X_{t}^{1:N})   
\end{bmatrix}^{'}
Q \begin{bmatrix}
 X_{t}^{1:N}    
\end{bmatrix} +\begin{bmatrix}
 (U_{t}^{1:N})    
\end{bmatrix}^{'}
R \begin{bmatrix}
 U_{t}^{1:N}   
\end{bmatrix} ,  
\end{align*}
where $Q\geq 0$ and $R>0$ are symmetric bounded matrices, and $V^{\prime}$ denotes the transpose of the vector $V$. For this class of linear quadratic teams, Assumption \ref{assump:convex} holds under the isomorphic open-loop information structure. 

Let $\mathcal{F}^{t}_{X^{i}}$ and $\mathcal{F}^{t}_{X_{0}^i,W^i}$ denote the natural filtration of $X_{[0,t]}^{i}$ and $\{X_{0}^{i},W_{s}^{i}, \forall s\leq t\}$, respectively. We first show that for any admissible policy with the induced actions process adapted to $\mathcal{F}^{t}_{X^{i}}$, there exists another policy with the induced action process adapted to $\mathcal{F}^{t}_{X_{0}^i,W^i}$ that attains identical team cost and vice versa. We have  $\mathcal{F}^{t}_{X^{i}}\subseteq \mathcal{F}^{t}_{X_{0}^i,W^i}$. Now, we show that $\mathcal{F}^{t}_{X_{0}^i,W^i}\subseteq\mathcal{F}^{t}_{X^{i}}$. For every $t\in \mathbb{T}$, we have
 \begin{align}
    {W}_{t}^i &= \sigma^{-1}\left({X}_{t}^i - {X}_{0}^i-\int_{0}^{t} \left[A{X}_{\tau}^i + B{U}_{\tau}^i\right] d\tau\right)\label{eq:ex2-cv-dyn1}. 
\end{align}
This implies that $\mathcal{F}^{t}_{X_{0}^i,W^i}\subseteq\mathcal{F}^{t}_{X^{i}}$, and hence, $\mathcal{F}^{t}_{X_{0}^i,W^i}$ and $\mathcal{F}^{t}_{X^{i}}$ are weakly isomorphic in a sense that there exist isomorphic admissible policies such that the induced action and state processes under two information structures are isomorphic in distribution and the expected performance remains the same under weakly isomorphic admissible policies for two information structures for every $t\in \mathbb{T}$ and $i\in \mathcal{N}$. Under this reduction, we can rewrite the actions under two isomorphic policies almost surely. This yields that the expected cost function remains identical under this reduction. Next, we show convexity of $J(\pmb{\gamma^{1:N}})$ in $\pmb{\gamma^{1:N}}$ for any open-loop adapted policies $\pmb{\gamma^{1:N}}$, where the induced actions $\{U_{t}^{i}\}_{t}$ are adapted to $\mathcal{F}^{t}_{X_{0}^i,W^i}$ and belong to $L^{2}([0,T];\mathbb{U})$, i.e., $U_{t}^{i}=\gamma^{i}_{t}(X_{0}^i,W^i_{[0,t]})$ with $\mathbb{E}[\int_{0}^{T}|U_{s}^{i}|^2ds]<\infty$. Denote the space of such actions by $\mathcal{U}^i$ equipped with $L^{2}$-inner products  $\langle \cdot, \cdot \rangle$, making it a Hilbert space. Also, let $\mathcal{X}^i$ be the set of $X_{t}^{i}$ that is $\mathcal{F}^{t}_{X_{0}^i,W^i}$ adapted, continuous, and $\mathbb{E}[\sup_{t\in \mathbb{T}}|X_{t}^{i}|^2]<\infty$. 

Following \cite[Section 2.2]{sun2020stochastic}, for any $\pmb{U^{i}}\in \mathcal{U}^i$, we  have
\begin{align}
    X^{i}_{t}(\pmb{U^{i}})=(\phi^i X_{0}^i)(t)+(\psi^{i}\pmb{U^i})(t)\quad \forall t\in \mathbb{T}\quad i\in \mathcal{N}\label{eq:Opt-dyn}
\end{align}
for some bounded linear operators $\phi^i:\mathbb{X}\to \mathcal{X}^i$ and $\psi^i:\mathcal{U}^i\to \mathcal{X}^i$. Denote the adjoint of these operators by $\phi^{i*}$ and $\psi^{i*}$, respectively. Using \eqref{eq:Opt-dyn}, we rewrite the expected cost as
\begin{align}
    J_{N}(\underline{\pmb{U}})&=\langle M_1\underline{\pmb{U}},\underline{\pmb{U}}\rangle+2\langle M_2\underline{\pmb{X_{0}}},\underline{\pmb{U}}\rangle+\langle M_3\underline{\pmb{X_0}},\underline{\pmb{X_0}}\rangle\label{cost-func},
\end{align}
where $\underline{\pmb{U}}:=\pmb{U^{1:N}},\underline{\pmb{X_0}}:=\pmb{X_{0}^{1:N}}$ and $M_{1}:=R+ \psi^* Q \psi, M_{2}:=\phi^{*}Q\psi, M_3:=\phi^{*}Q\phi$, 
 $\psi = \begin{bmatrix}
 \psi^{1:N}   
\end{bmatrix}$, and
 $\phi = \begin{bmatrix}
 \phi^{1:N}   
\end{bmatrix}$.

Following \eqref{cost-func}, $J_{N}(\underline{\pmb{U}})$ is convex in $\underline{\pmb{U}}$ since $R>0$ and $Q\geq 0$. Hence, Assumption \ref{assump:convex} holds under the weakly isomorphic information structure.

\section{Proof of Theorem \ref{the:1}}\label{App:the1}
Consider any (possibly path-dependent) policy of $\pmb{\gamma^{-i}}\in \Gamma^{-i}$, we have
\begin{align}
&\inf_{\pmb{\gamma^i}\in \Gamma^i}\mathbb{E}^{\pmb{\gamma^{1:N}}}\left[\int_{0}^{T} c\left(X_{t}^{1:N}, U_{t}^{1:N}\right) dt\right]\nonumber\\
&=\inf_{\pmb{\gamma^i}\in \Gamma^i}\mathbb{E}^{\pmb{\gamma^{i}}}\left[\int_{0}^{T}\int \bar{c}_t\left(\omega_{0}^{i},X_{t}^{i},  U_{t}^{i}\right) P^i(d\omega_{0}^{i})dt\right]\label{eq:sec2-6}\\
&=\inf_{\pmb{\gamma^i}\in \Gamma^{i, {\sf M}}}\mathbb{E}^{\pmb{\gamma^{i}}}\left[\int_{0}^{T}\hat{c}_t^i\left(X_{t}^{i},  U_{t}^{i}\right) dt\right]\label{eq:sec2-8}\\
&=\inf_{\pmb{\gamma^i}\in \Gamma^{i, {\sf M}}}\mathbb{E}^{\pmb{\gamma^{1:N}}}\left[\int_{0}^{T} c\left(X_{t}^{1:N},U_{t}^{1:N}\right) dt\right]\label{eq:sec2-9},
\end{align}
where $\omega_{0}^{i}:=(X_{[0,T]}^{-i}, U_{[0,T]}^{-i})$, 
$\bar{c}_t\left(\omega_{0}^{i},X_{t}^{i},  U_{t}^{i}\right):=c\left(X_{t}^{1:N}, U_{t}^{1:N}\right)$, and
 \begin{align*}   
 \hat{c}_{t}^i\left(X_{t}^{i},U_{t}^{i}\right)&:=\int \bar{c}_t\left(\omega_{0}^{i},X_{t}^{i},  U_{t}^{i}\right) P^i(d\omega_{0}^{i}).
\end{align*}
The costs $\tilde{c}_{t}$ and $\hat{c}_{t}^{i}$ depend on $t$ as a result of the projection in their definitions. Equality \eqref{eq:sec2-6} is true since dynamics are decoupled and the information structure is fully decentralized, and hence, $\omega_{0}^{i}$ is independent of $\pmb{X^{i}}$ and $\pmb{U^{i}}$ for any policy $\pmb{\gamma^{-i}}$. Equality \eqref{eq:sec2-8} follows from the fact that the dynamic of DM$^i$ independent of policies of other DMs is Markovian, and hence, DM$^i$ faces a classical continuous-time Markov decision problem. Let $\epsilon\geq0$. Equalities \eqref{eq:sec2-6}--\eqref{eq:sec2-9} imply that if $(\pmb{\gamma^{i\star}}, \pmb{\gamma^{-i\star}})\in \Gamma_{N}$ is an $\epsilon$-optimal policy for \PN\, then we can replace every policy with a Markovian policy and obtain an $\epsilon$-optimal policy for \PN\ that is Markovian.
Since $\epsilon$ is arbitrary, 
 this completes the proof.

\subsection{Proof of Theorem \ref{the:2}}\label{App:the2}

\begin{itemize}[wide]
    \item [Part (i):]
For any $\pmb{\gamma^{1:N}}\in \Gamma_{N}^{\sf M}$ and $\tau\in S_{N}$, we have
\begin{align}
    J(\pmb{\gamma}^{\tau(1)}, \ldots, \pmb{\gamma}^{\tau(N)})\nonumber
    &=\int\int_{0}^{T}  c\left(X_{t}^{\tau(1), \pmb{\gamma}^{\tau(1)}:\tau(N), \pmb{\gamma}^{\tau(N)}}, U_{t}^{\tau(1), \pmb{\gamma}^{\tau(1)}:\tau(N), \pmb{\gamma}^{\tau(N)}}\right)dt\nonumber\\
    &\times P(d\pmb{X}^{\tau(1), \pmb{\gamma}^{\tau(1)}:\tau(N), \pmb{\gamma}^{\tau(N)}})\prod_{i=1}^{N} P^{\pmb{\gamma}^{\tau(i)}}(d\pmb{U}^{\tau(i), \pmb{\gamma}^{\tau(i)}}|\pmb{X}^{\tau(i), \pmb{\gamma}^{\tau(i)}})\label{eq:sec2-14}\\
    &=\int\int_{0}^{T}  c\left(X_{t}^{1, \pmb{\gamma}^{1}}, \ldots, X_{t}^{N, \pmb{\gamma}^{N}}, U_{t}^{1, \pmb{\gamma}^{1}}, \ldots, U_{t}^{N, \pmb{\gamma}^{N}}\right)dt\nonumber\\
    &\times P(d\pmb{X^{1, {\gamma}^{1}}}, \ldots, d\pmb{X^{N, {\gamma}^{N}}})\prod_{i=1}^{N} P^{\pmb{\gamma}^{i}}(d\pmb{U^{i, {\gamma}^{i}}}|\pmb{X^{i, {\gamma}^{i}}})\label{eq:sec2-15}\\
    &=J(\pmb{\gamma^{1}}, \ldots, \pmb{\gamma^{N}}),
\end{align}
where $X_{t}^{i, \pmb{\gamma}^{\tau(i)}}$ denotes the state of $X_{t}^{i}$ under the policy $\pmb{\gamma}^{\tau(i)}$ for  DM$^i$, and $U_{t}^{i, \pmb{\gamma}^{\tau(i)}}$ is the action of DM$^i$ under the policy $\pmb{\gamma}^{\tau(i)}$ and the state $X_{t}^{i, \pmb{\gamma}^{\tau(i)}}$. In the above, $P^{\pmb{a^i}}$ is the conditional distribution of $\pmb{U^{i, a^i}}$ given $\pmb{X^{i, a^i}}$.
Equality \eqref{eq:sec2-14} follows from relabeling $u^{i}$, $X_{0}^{i}$, $W_{t}^{i}$ by $u^{\tau(i)}$, $X_{0}^{\tau(i)}$, $W_{t}^{\tau(i)}$ for all $t\in \mathbb{T}$, respectively. Equality \eqref{eq:sec2-15} follows from Assumption \ref{assump1}, and the fact that $b_{t}$  and $\sigma_{t}$ in \eqref{eq:st-dy-N} are identical among DMs. 

\item [Part (ii):]
Given $\pmb{\gamma^{1\star:N\star}}\in\Gamma_{N}^{\sf M}$, construct the following symmetric policy $\bar{\pmb{\gamma}}:=(\pmb{\gamma^{\star}}, \ldots, \pmb{\gamma^{\star}})$ as follows 
\begin{align}
{\gamma}^{\star}_{t}(\cdot) := \frac{1}{|S_{N}|}\sum_{\tau\in S_{N}}{\gamma}^{\tau(i)\star}_{t}(\cdot) \qquad \forall t\in \mathbb{T},\quad i\in \mathcal{N}.
\end{align}
We note that $\bar{\pmb{\gamma}}\in \Gamma_{N}^{\sf SYM,M}$.
Under Assumption \ref{assump:convex}, using Jensen's inequality, we have
\begin{align*}
J\left(\bar{\pmb{\gamma}}\right)&=J\left(\frac{1}{|S_{N}|}\sum_{\tau\in S_{N}}\pmb{\gamma^{\tau(1)\star}}, \ldots, \frac{1}{|S_{N}|}\sum_{\tau\in S_{N}}\pmb{\gamma^{\tau(N)\star}}\right)\\
&\leq \frac{1}{|S_{N}|}\sum_{\tau\in S_{N}} J\left(\pmb{\gamma^{\tau(1)\star}}, \ldots, \pmb{\gamma^{\tau(N)\star}}\right)\\
&=J\left(\pmb{\gamma^{1\star:N\star}}\right),
\end{align*}
where the last equality follows from part (i). This implies that for any arbitrary Markovian policy $\pmb{\gamma^{1\star:N\star}}\in\Gamma_{N}^{\sf M}$, there exists a policy $\bar{\pmb{\gamma}}\in \Gamma_{N}^{\sf SYM,M}$ that performs at least as well as $\pmb{\gamma^{1\star:N\star}}\in\Gamma_{N}^{\sf M}$. This completes the proof of part (ii).

\end{itemize}

\section{Proof of Theorem \ref{the:3}}\label{App:the3}
\begin{itemize}[wide]
    \item [Part (i):]
 In the following, we first show that 
$\pmb{{\nu}^{1:N}}\mapsto J_{N}(\pmb{{\nu}^{1:N}})$ 
is lower-semi continuous. Let $\pmb{{\nu}_k^i}$ converge to $\pmb{{\nu}^i}$ as $k\to \infty$ for all $i=1, \ldots, N$ and let
\begin{align*}
    \Theta_{M}({X^{1:N}},{U^{1:N}}):= \min\left\{M, c(X^{1:N}, U^{1:N})\right\}.
\end{align*}
We have
\begin{align}
    &\liminf_{k\to \infty} \mathbb{E}^{\pmb{\nu_{k}^{1:N}}}\left[\int_{0}^{T} c(X_{t}^{1:N}, U_{t}^{1:N})dt \right]\label{eq:pfT3-0}\\
    &\geq \lim_{M\to \infty}\liminf_{k\to \infty}\int_{0}^{T}\int \Theta_{M}({X^{1:N}_t},{U^{1:N}_t}) \prod_{i=1}^{N} \pmb{\mu^{i}}(d\pmb{X^{i}};\pmb{\nu^{i}_{k}}){\nu^{i}_{k,t}}(X_{t}^{i})(dU^i_t)dt\label{eq:pfT3-1}\\
    &=\lim_{M\to \infty}\int_{0}^{T}\int \Theta_{M}({X^{1:N}_t},{U^{1:N}_t}) \prod_{i=1}^{N} \pmb{\mu^{i}}(d\pmb{X^{i}};\pmb{\nu^{i}}){\nu^{i}_{t}}(X_{t}^{i})(dU^i_t)dt\label{eq:pfT3-3}\\
&=\mathbb{E}^{\pmb{\nu^{1:N}}}\left[\int_{0}^{T} c(X_{t}^{1:N},U_{t}^{1:N}) dt \right],\label{eq:pfT3-4}
\end{align}
where 
\eqref{eq:pfT3-1} is true since we have $c(X_{t}^{1:N}, U_{t}^{1:N})\geq\Theta_{M}({X^{1:N}_t},{U^{1:N}_t})$ for every $M\in \mathbb{R}_{+}$. Equality \eqref{eq:pfT3-3} follows from the generalized convergence theorem \cite[Theorem 3.5]{serfozo1982convergence} and
the fact that $\pmb{\nu^i_{k}}$ converges weakly to $\pmb{\nu^i}$ as $k\to \infty$ for all $i\in \mathcal{N}$ since $c$ is continuous by Assumption \ref{assum:5-4} and $\Theta_{M}$ is bounded. Equality \eqref{eq:pfT3-4} follows from the monotone convergence theorem.

Following \eqref{eq:pfT3-0}--\eqref{eq:pfT3-4}, $J_{N}(\pmb{\nu^{1:N}})$ is lower-semi continuous in $\pmb{\nu^{1:N}}$. 
 This together with the fact that $\mathcal{M}^i$ is compact following Lemma \ref{lem:Borkar} implies that there exists an optimal solution $\pmb{\nu^{1\star:N\star}}$ for \PN\ among randomized Markov policies. Along the same reasoning as that in the proof of Theorem \ref{the:1}, we can show that restricting the search for global optimality to randomized Markov policies is without loss among all path-dependent independently randomized policies, and this completes the proof of part (i). 
 
\item [Part (ii):] Following Theorem \ref{the:2}, we can assume that a globally optimal solution is Markovian and symmetric. Hence, we can assume that a randomized globally optimal solution is Markovian and symmetric since the randomization does not improve the team's performance. Using Lemma \ref{lem:Borkar}, we can conclude that the set of randomized symmetric Markovian policies in $\mathcal{M}^i$ is compact (symmetry will be preserved in the limit as we study each $\mathcal{M}^i$ separately). Following the reasoning of part (i), we conclude that $(\pmb{\nu}, \ldots, \pmb{\nu})\mapsto J_{N}(\pmb{{\nu}}, \ldots, \pmb{{\nu}})$ is lower-semi continuous. This completes the proof using the Weierstrass theorem. 
\end{itemize}

\section{Proof of Theorem \ref{the:4}}\label{App:the4}
\begin{itemize}[wide]
    \item [Part (i):]
Consider a sequence of globally optimal policies $\{\pmb{\nu_{N}^{\star}}\}_{N}\subseteq \mathcal{M}^i$ that is Markovian and symmetric. Since $\mathcal{M}^i$ is compact, there exists a subsequence $\{\pmb{\nu_{n}^{\star}}\}_{n}\subseteq \mathcal{M}^i$ that converges to a limiting policy $\pmb{\nu^{\star}}\in \mathcal{M}^i$. For every $t\in \mathbb{T}$, define the following empirical measures induced by $\pmb{\nu_{n}^{\star}}$ and $\pmb{\nu^{\star}}$ as
\begin{align*}
    Q_{n}^t(\cdot) &= \frac{1}{n}\sum_{i=1}^{n} \delta_{\{X^{i,n}_t, U^{i, n}_t\}}(\cdot),\quad\overline{Q}_{n}^t(\cdot) = \frac{1}{n}\sum_{i=1}^{n} \delta_{\{X^{i,\infty}_t, U^{i, \infty}_t\}}(\cdot),
\end{align*}
where $U^{i, n}_t,X^{i, n}_t$ and $U^{i, \infty}_t,X^{i, \infty}_t$ are induced by $\pmb{\nu_{n}^{\star}}$ and $\pmb{\nu^{\star}}$, respectively. Due to the symmetry and Assumption \ref{assump1} and Assumption \ref{assump:convex}, we have that $X^{i,n}_t, U^{i, n}_t$ and $X^{i,\infty}_t, U^{i, \infty}_t$ are i.i.d. among DMs. By Skorohod’s representation theorem, we have $X^{i, n}_t$ and $U^{i, n}_t$ converge almost surely to $X^{i, \infty}_t$ and $U^{i, \infty}_t$, respectively for every $t\in \mathbb{T}$. Also, using the strong law of large numbers, Markov inequality, and dominated convergence theorem, we can show that for a continuous bounded function $g$ ($g\in \mathcal{C}_{b}(\mathbb{X} \times \mathbb{U})$)
\begin{align*}
 \lim_{n\to \infty}\left|\frac{1}{n}\sum_{i=1}^{n} g(X^{i,n}_t, U^{i, n}_t)- \mathbb{E}[g(X^{i,\infty}_t, U^{i, \infty}_t)]\right|=0\quad \mathbb{P}\text{-a.s.} 
\end{align*}
Since the above holds for any function of $\mathcal{C}_{b}(\mathbb{X} \times \mathbb{U})$, considering a countable family of measure-determining functions subset of $\mathcal{C}_{b}(\mathbb{X} \times \mathbb{U})$ conclude that $\{{Q}_{n}^t\}_{n}$ converges (a.s.) weakly to $Q_{\infty}^t:=\mathcal{L}(X^{i, \infty}_t, U^{i,\infty}_t)$ for every $t\in \mathbb{T}$ along a subsequence denoted again by $n$. Similarly, $\{\overline{Q}_{n}^t\}_{n}$ converges (a.s.) weakly to $Q_{\infty}^t:=\mathcal{L}(X^{i, \infty}_t, U^{i,\infty}_t)$ for every $t\in \mathbb{T}$. Since the path space is continuous and the action space is compact, we can conclude that the empirical measure of the path space also converges weakly. Hence,
\begin{align}
   &\limsup_{N\to \infty} \inf_{\pmb{{\nu}^{1:N}}\in \prod_{i=1}^{N}\mathcal{M}^i} J_{N}(\pmb{{\nu}^{1:N}}) \nonumber\\
   &= \limsup_{N\to \infty} J_{N}(\pmb{\nu_{N}^{\star}}, \ldots, \pmb{\nu_{N}^{\star}}) \label{eq:4-01}
   \\
   &\geq J_{\infty}(\underline{\pmb{\nu^{\star}}})\label{eq:4-1}
\end{align}
where $\underline{\pmb{{\nu}^{\star}}}:=(\pmb{{\nu}^{\star}}, \pmb{{\nu}^{\star}}, \ldots)$. Equality in \eqref{eq:4-01} follows from the fact that $(\pmb{\nu_{N}^{\star}}, \ldots, \pmb{\nu_{N}^{\star}})$ is globally optimal for \PN\ for every finite $N$. Now, we prove the inequality in \eqref{eq:4-1}. We have
\begin{align}
    &\limsup_{N\to \infty} J_{N}(\pmb{\nu_{N}^{\star}}, \ldots, \pmb{\nu_{N}^{\star}})\nonumber\\
&\geq\lim_{M\to \infty}\lim_{n\to \infty}\int_{0}^{T}\int 
g(t,Q_{n}^t,M) \prod_{i=1}^{\infty} \pmb{\mu}(d\pmb{X^{i}};\pmb{\nu_{n}^{\star}}){\nu_{n,t}^{\star}}(X_{t}^{i})(d{U^{i}_t})dt \nonumber\\
&=\lim_{M\to \infty} \int_{0}^{T} \int g(t,Q_{\infty}^t,M) \prod_{i=1}^{\infty} \pmb{\mu}(d\pmb{X^{i}};\pmb{\nu^{\star}}){\nu_{t}^{\star}}(X_{t}^{i})(d{U^{i}_t})dt\label{eq:pfT4-3}\\
&=J_{\infty}(\underline{\pmb{{\nu}^{\star}}})\label{eq:pfT4-4},
\end{align}
where 
\begin{align*}
    &g(t,Q_{l}^t,M):=Q_{l}^{t}(dX, dU) \min\left\{M, \hat{c}(X, U, \int Q_{l}^{t}(dX, \mathbb{U}), \int Q_{l}^{t}(\mathbb{X}, dU))\right\}
\end{align*}
for $l=n$ or $l=\infty$. Equality \eqref{eq:pfT4-3} follows from the generalized dominated convergence theorem  since ${Q}_{n}^t$ and $\overline{Q}_{n}^t$ converge weakly to $Q_{\infty}^t$ for every $t\in \mathbb{T}$, and $\pmb{\nu_{n}^{\star}}$  converges to  $\pmb{\nu^{\star}}$. Equality \eqref{eq:pfT4-4} follows from the monotone convergence theorem and the same line of reasoning as that used for the expected cost under $\pmb{{\nu}^{\star}}$.
This completes the proof of the inequality in \eqref{eq:4-1}.
On the other hand, we have 
\begin{align}
   J_{\infty}(\underline{\pmb{{\nu}^{\star}}}) &\leq \limsup_{N\to \infty} \inf_{\pmb{{\nu}^{1:N}}\in \prod_{i=1}^{N}\mathcal{M}^i} J_{N}(\pmb{{\nu}^{1:N}}) \\&\leq  \inf_{\pmb{{\nu}^{1:\infty}}\in \prod_{i=1}^{\infty}\mathcal{M}^i} \limsup_{N\to \infty}  J_{N}(\pmb{{\nu}^{1:N}})\label{eq:4-5}
   \\&= \inf_{\pmb{{\nu}^{1:\infty}}\in \prod_{i=1}^{\infty}\mathcal{M}^i} J_{\infty}(\pmb{{\nu}^{1:\infty}})\label{eq:4-6},
\end{align}
where the second inequality in \eqref{eq:4-5} follows from exchanging infimum and limsup. Hence, the policy $\underline{\pmb{{\nu}^{\star}}}$ is globally optimal for \PIN, and completes the proof of part (i).

\item [Part (ii):] 
This part follows from Theorem \ref{the:3} and part (i). By Theorem \ref{the:3}, for every finite $N$, there exists a globally optimal policy that is symmetric and Markovian. Hence, there exists a sequence of globally optimal policies $\{\pmb{\nu_{N}^{\star}}\}_{N}\subseteq \mathcal{M}^i$ that is symmetric. By part (i),  $\{\pmb{\nu_{N}^{\star}}\}_{N}$ converges through a subsequence to a globally optimal policy $\pmb{{\nu}^{\star}}\in \mathcal{M}^i$ for \PIN. 

\end{itemize}

\section{Proof of  Lemma \ref{lem:2}}\label{app:lem:2}
 \begin{itemize}
        \item [Part(i):] Since marginals on $[0,T]$ is fixed and $\mathbb{U}$ is compact, $\Gamma^i$ is tight for all $i\in \mathcal{N}$. Using Kushner’s weak convergence approach \cite{kushner2012weak} or Borkar's approach in \cite{borkar1989topology}, we can show that $\Gamma^i$ is closed. Hence, $\Gamma^i$ is compact. Now, we show that $\LEXN$ and $\LEX$ are compact using the compactness of $\Gamma^i$. 

         Since $\Gamma^i$ is compact, $\LEXN$ and $\LEX$ are both tight under the product topology. We only need to show that $\LEXN$ and $\LEX$ are closed.  We only show that $\LEXN$ is closed under the weak-convergence topology. Closedness of $\LEX$ follows from a similar argument. 

For any permutation $\tau \in S_{N}$, we define a randomized policy $P_{\pi}^{\tau} \in {L}^{N}$ as a permutation, $\tau$, of arguments of a randomized policy $P_{\pi} \in {L}^{N}$, i.e., for $A^{i} \in \mathcal{B}(\Gamma^{i})$
\begin{flalign}\label{eq:s4.1}
&P_{\pi}^{\tau}(\pmb{\gamma^{1}}\in A^{1},\dots, \pmb{\gamma^{2}}\in A^{N}):=P_{\pi}(\pmb{\gamma}^{\tau(1)}\in A^{1},\dots, \pmb{\gamma}^{\tau(N)}\in A^{N}).
\end{flalign}

Suppose that a sequence of randomized policies $\{P_{\pi}^{ n}\}_{n}$ converges weakly to a randomized policies $P_{\pi}^{\infty}$ as $n \to \infty$. Also, suppose that $\{P_{\pi}^{\tau, n}\}_{n}$ converges weakly to $P_{\pi}^{\tau, \infty}$ as $n \to \infty$, where for $A^{i} \in \mathcal{B}(\Gamma^{i})$ 
\begin{flalign*}
&P_{\pi}^{\tau, n}(\pmb{\gamma^{1}}\in A^{1},\cdots, \pmb{\gamma^{N}}\in A^{N})=P_{\pi}^{n}(\pmb{\gamma}^{\tau(1)}\in A^{1},\cdots, \pmb{\gamma}^{\tau(N)}\in A^{N})\quad \forall \tau \in S_{N}.
\end{flalign*}
{We will show that the limit is also $N$-exchangeable:} Let $\mathcal{T}$ be a countable measure-determining subset of the set of all real-valued continuous functions on $\prod_{i=1}^{N}\Gamma^{i}$. For a function $f\in \mathcal{T}$, we have
\begin{align}
    \int f(\pmb{\gamma^{1:N}})P_{\pi}^{\tau, \infty}(d\pmb{\gamma^{1:N}})
    &= \lim_{n\to \infty}\int f(\pmb{\gamma^{1:N}})P_{\pi}^{n}(d\pmb{\gamma^{1:N}})
    =\int f(\pmb{\gamma^{1:N}})P_{\pi}^{ \infty}(d\pmb{\gamma^{1:N}})\label{eq:30-the1},
\end{align}
where \eqref{eq:30-the1} follows from the weak convergence of $\{P_{\pi}^{\tau, n}\}_{n}$ and $\{P_{\pi}^{n}\}_{n}$ and the fact that $P_{\pi}^{n}$ is $N$-exchangeability for every $n$. Since $\mathcal{T}$ is countable, for all $f\in \mathcal{T}$, we get \eqref{eq:30-the1}.
Since $\mathcal{T}$ is measure-determining, we get that $P_{\pi}^{\tau, \infty}$ and $P_{\pi}^{\infty}$ are the same, and hence, $\LEXN$ is compact as it is tight. An argument similar to the above establishes that $\LEX$ is compact. Convexity of $\LEXN$ and $\LEX$ follow from the fact that the convex combination of exchangeable probability measures remains an exchangeable probability measure.

\item [Part (ii):] Since $\{P_{\pi, k}\}_{k}$ as a subset of $\LEXN$ (or $\LEX$) converges weakly to $P_{\pi, \infty}$ as $k\to \infty$, the induced $\{\pmb{\gamma^{i}_k}(\omega)\}_{k}$ converges weakly to $\pmb{\gamma^{i}_k}(\omega)$ for all $i\in \mathcal{N}$  as $k\to \infty$. 

Consider a sequence of $\{\pmb{\gamma^{i}_k}(\omega)\}_{k}\subseteq \Gamma^{i}$ that converges to $\pmb{\gamma^{i}_k}(\omega)$ weakly for all $i\in \mathcal{N}$.  Let $\pmb{X^i_{k}}$ be the solution of the diffusion equation in \eqref{eq:st-dy-N} under $\pmb{\gamma^{i}_k}(\omega)$, and hence, for any twice continuously differentiable function $f:\mathbb{X}\to \mathbb{R}$, it satisfies the following Martingale solution
    \begin{align}\label{mart-1}
        \mathbb{E}_{x_0}\left[f(X_{t,k}^{i})-f(X_{0,k}^i)-\int_{0}^{t}\int_{\mathbb{U}}\mathcal{A}^{U^i}(X_{s,k}^{i})\pmb{\gamma^{i}_k}(d{U^{i}},ds)\right] = 0,
    \end{align}
    where $\mathbb{E}_{x_0}$ denotes the conditional expectation on $X_{0}^{i}=x_0$.
    By a tightness argument (via applying \cite[Theorem 1.4.2]{kushner2012weak} to \cite[Theorems 2.3.1 and 2.3.3]{kushner2012weak}), $\pmb{X^i_{k}}$ admits a subsequence (denoted by $n$) that converges weakly $\pmb{X^i}$. By Skorohod’s representation theorem, there exists a probability space in which $\{\pmb{\gamma^{i}_n}(\omega)\}_{n}$ and $\{\pmb{X^i_{n}}\}$ converge weakly to $\pmb{\gamma^{i}}(\omega)$ and $\pmb{X^i}$ almost surely (in $\omega$), respectively. Following \eqref{mart-1}, we have
\begin{align}
\mathbb{E}_{x_0}\bigg[&f(X_{t,n}^{i})-f(X_{0,n}^i)\nonumber\\
&-\int_{0}^{t}  \left({\sf Tr} \left(a_{s}(X_{s,n}^i)\nabla^2 f(X_{s,n}^{i})\right)+\int_{\mathbb{U}}b_{s}(X_{s,n}^{i},u^i)\cdot \nabla f(X_{s,n}^{i})\pmb{\gamma^{i}_n}(d{U^{i}}|s)\right)\bigg] = 0,\label{mart-2}
\end{align}
where $a_{s}(X_{s,n}^i)=\frac{1}{2}\sigma_{s}\sigma^{T}_{s}$
By the
generalized dominated convergence \cite[Theorem 3.5]{serfozo1982convergence}, we have
\begin{align}
&\lim_{n\to \infty}  \left({\sf Tr} \left(a_{s}(X_{s,n}^i)\nabla^2 f(X_{s,n}^{i})\right)+\int_{\mathbb{U}}b_{s}(X_{s,n}^{i},u^i)\cdot \nabla f(X_{s,n}^{i})\pmb{\gamma^{i}_n}(d{U^{i}}|s)\right)\nonumber\\
&= \left({\sf Tr} \left(a_{s}(X_{s}^i)\nabla^2 f(X_{s}^{i})\right)+\int_{\mathbb{U}}b_{s}(X_{s}^{i},u^i)\cdot \nabla f(X_{s}^{i})\pmb{\gamma^{i}}(d{U^{i}}|s)\right).
\end{align}
Hence, the limit $\pmb{X^i}$ solves the Martingale equation, which is induced by $\pmb{\gamma^{i}}(\omega)$. This implies that $\pmb{X^i_{n}}$ induced by $\pmb{\gamma^{i}_n}(\omega)$ converges weakly to $\pmb{X^i}$ induced by $\pmb{\gamma^{i}}(\omega)$, and hence,  $\mu_{n}^i$, the distribution of $\pmb{X^i_{n}}$, converges to $\mu^i$, the distribution of $\pmb{X^i}$ for all $i\in \mathcal{N}$. This completes the proof of part (ii).  
\end{itemize}  

\section{Proof of Theorem \ref{the:5}}\label{app:the:5}
 The proof proceeds in three steps. In step 1, we show that there exists an exchangeable randomized globally optimal policy ${P_{\pi}}$ for \PN\ among all exchangeable randomized policies of $\LEXN$. In step 2, we show that the expected cost remains the same under any permutation of randomized policies in $L^{N}$. Finally, we show that without loss we can restrict the search among all randomized policies in $L^{N}$ to exchangeable ones $\LEXN$.
\begin{itemize}
    \item [{\it Step 1.}] We first show that the mapping $J_{N}^{\pi}:\LEXN \to \mathbb{R}$ is continuous. Suppose that $\{P_{\pi, n}\}_{n}$ as a subset of $\LEXN$ (or $\LEX$) converge to $P_{\pi, \infty}$. By Lemma \ref{lem:2}(ii), $\mu_{n}^i$, the distribution of $\pmb{X^i_{n}}$, converges to $\mu^i$, the distribution of $\pmb{X^i}$ for all $i\in \mathcal{N}$. By Skorohod’s representation theorem, there exists a probability space in which $\{\pmb{\gamma^{i}_n}(\omega)\}_{n}$ and probability measure on $\{\pmb{X^i_{n}}(\omega)\}$ converge weakly to $\pmb{\gamma^{i}}(\omega)$ and the probability measure on $\pmb{X^i}(\omega)$ almost surely (in $\omega$), respectively. The generalized convergence theorem implies that $J_{N}^{\pi}:\LEXN \to \mathbb{R}$ is continuous. Since $\LEXN$ is compact by Lemma Lemma \ref{lem:2}(i), this implies that there exists an exchangeable randomized globally optimal policy $P_{\pi}^{N}$ for \PN\ among all randomized policies $\LEXN$.
\item [{\it Step 2.}]  For any $P_{\pi}^{N}\in L^{N}$ and any permutation of $\tau\in S_{N}$, we have
\begin{flalign}
&\int \left(c_{N}(\pmb{X}^{1:N},\pmb{\gamma^{1:N}})\prod_{i=1}^{N}\mu^i(d\pmb{X^{i}};\pmb{\gamma^{i}})\right) P_{\pi}^{N, \tau}(d\pmb{\gamma^{1:N}})\nonumber\\
&=\int \left(\int \int_{0}^{T} c(X_{t}^{1:N},U_{t}^{1:N})\prod_{i=1}^{N}\pmb{\gamma^{i}}(d{U^{i}},dt)\mu^i(d\pmb{X^{i}};\pmb{\gamma^{i}})\right) P_{\pi}^{N}(d\pmb{\gamma}^{\tau(1)},\ldots, d\pmb{\gamma}^{\tau(N)})\label{eq:4.4.4.5}\\
&=\int \left(\int \int_{0}^{T} c(X_{t}^{\tau(1:N)},U_{t}^{\tau(1:N)})\prod_{i=1}^{N}\pmb{\gamma}^{\tau(i)}(dU^{\tau(i)},dt)\mu^{\tau(i)}(d\pmb{X}^{\tau(i)};\pmb{\gamma}^{\tau(i)})\right)\nonumber\\
&\qquad\times P_{\pi}^{N}(d\pmb{\gamma}^{1},\ldots, d\pmb{\gamma}^{N})\label{eq:4.4.4.4}\\
&=\int \left(\int \int_{0}^{T} c(X_{t}^{1:N},U_{t}^{1:N})\prod_{i=1}^{N}\pmb{\gamma^{i}}(d{U^{i}},dt)\mu^i(d\pmb{X^{i}};\pmb{\gamma^{i}})\right) P_{\pi}^{N}(d\pmb{\gamma}^{1},\ldots, d\pmb{\gamma}^{N})\label{eq:4.4.4.6}\\
&=\int \left(c_{N}(\pmb{X}^{1:N},\pmb{\gamma^{1:N}})\prod_{i=1}^{N}\mu^i(d\pmb{X^{i}};\pmb{\gamma^{i}})\right) P_{\pi}^{N}(d\pmb{\gamma^{1:N}})\nonumber,
\end{flalign}
 where \eqref{eq:4.4.4.5} follows from the definition of permutation of a randomized policy in \eqref{eq:s4.1}, and \eqref{eq:4.4.4.4} follows from relabeling $\pmb{U}^{\tau(i)}, \pmb{X}^{\sigma(i)},\pmb{\gamma}^{\tau(i)}$ with $\pmb{U^{i}}, \pmb{X^{i}},\pmb{\gamma^{i}}$ for all $i=1,\dots,N$. Equality \eqref{eq:4.4.4.6} follows from exchangeability of cost in Assumption \ref{assump1} and symmetry in dynamics and initial states of the model. 

 \item [{\it Step 3.}]
 Consider a randomized policy $P^{N\star}_{\pi} \in {L}^{N}$. 
We can construct a policy $\tilde{P}^{N\star}_{\pi}$ as a convex combination of all possible permutations of $P^{N\star}_{\pi}$ by averaging them. Since ${L}^{N}$ is convex, we have $\tilde{P}_{\pi} \in {L}^{N}$. Also, we have that $\tilde{P}^{N\star}_{\pi} \in \LEXN$ since for any permutation $\tau \in S_{N}$
\begin{flalign}
\tilde{P}^{N\star}_{\pi}(d\pmb{\gamma^{1}},\dots,d\pmb{\gamma^{N}})&:=\frac{1}{|S_{N}|}\sum_{\tau\in S_{N}}P^{N\star,\tau}_{\pi}(d\pmb{\gamma^{1}},\dots,d\pmb{\gamma^{N}})\nonumber\\
&\:=\tilde{P}^{N\star, \tau}_{\pi}(d\pmb{\gamma^{1}},\dots,d\pmb{\gamma^{N}})\nonumber,
\end{flalign}
where $|S_{N}|$ denotes the cardinality of the set $S_{N}$, and the second equality follows from the fact that the sum is over all permutations $\tau$ by taking the average of them. We have
 \begin{flalign*}
 &\int \left(c_{N}(\pmb{X}^{1:N},\pmb{\gamma^{1:N}})\prod_{i=1}^{N}\mu^i(d\pmb{X^{i}};\pmb{\gamma^{i}})\right) \tilde{P}_{\pi}^{N\star}(d\pmb{\gamma^{1:N}})\nonumber\\
&= \frac{1}{|S_{N}|}\sum_{\tau\in S_{N}}\int \left(c_{N}(\pmb{X}^{1:N},\pmb{\gamma^{1:N}})\prod_{i=1}^{N}\mu^i(d\pmb{X^{i}};\pmb{\gamma^{i}})\right) P^{N\star,\tau}_{\pi}(d\pmb{\gamma^{1:N}})\\
&=\frac{1}{|S_{N}|}\sum_{\tau\in S_{N}}\int \left(c_{N}(\pmb{X}^{1:N},\pmb{\gamma^{1:N}})\prod_{i=1}^{N}\mu^i(d\pmb{X^{i}};\pmb{\gamma^{i}})\right) P^{N\star}_{\pi}(d\pmb{\gamma^{1:N}})\\
&=\int \left(c_{N}(\pmb{X}^{1:N},\pmb{\gamma^{1:N}})\prod_{i=1}^{N}\mu^i(d\pmb{X^{i}};\pmb{\gamma^{i}})\right) P^{N\star}_{\pi}(d\pmb{\gamma^{1:N}}),
 \end{flalign*}
 where the second equality is true since  $P_{\pi}^{N\star} \mapsto J_{N}^{\pi}(P_{\pi}^{N\star})$ is linear and the third equality follows from step 2.  Hence, for any arbitrary policy $P_{\pi}^{N\star}\in L^{N}$, there exists a policy $\tilde{P}_{\pi}^{N\star}\in \LEXN$ that performs at least as well as $P_{\pi}^{N\star}$. This together with step 1 completes the proof. 
\end{itemize}

\section{Proof of Lemma \ref{lem:3}}\label{app:lem:3}
To prove Lemma \ref{lem:3}, we use two following results by Diaconis and Friedman \cite[Theorem 13]{diaconis1980finite} and Aldous \cite[Proposition 7.20]{aldous2006ecole}, which we recall in the following.
\begin{theorem}\cite[Theorem 13]{diaconis1980finite}\label{the:dia}
Let $Y=(Y_{1},\dots,Y_{n})$ be an $n$-exchangeable and $Z=(Z_{1},Z_{2},\dots)$ be an infinitely exchangeable sequence of random variables with $\mathcal{L}(Z_{1},\dots,Z_{k})=\mathcal{L}(Y_{I_1},\dots,Y_{I_k})$ for all $k\geq 1$, where the indices $(I_{1}, I_{2}, \dots)$ are i.i.d. random variables with the uniform distribution on the set $\{1,\dots,n\}$. Then,  for all $m=1,\dots, n$,
\begin{flalign}
\left|\left|\mathcal{L}(Y_{1},\dots,Y_{m})-\mathcal{L}(Z_{1},\dots,Z_{m})\right|\right|_{\sf TV}\leq \frac{m(m-1)}{2n},
\end{flalign}
where $\mathcal{L}(\cdot)$ denotes the law of random variables and $||\cdot||_{\sf TV}$ is the total variation norm.
\end{theorem}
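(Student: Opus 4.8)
The plan is to reduce the bound to an elementary coupling estimate comparing sampling \emph{with} replacement and sampling \emph{without} replacement from the finite collection $\{Y_1,\dots,Y_n\}$. First I would record two reformulations of the two laws to be compared. By hypothesis, $\mathcal{L}(Z_1,\dots,Z_m)=\mathcal{L}(Y_{I_1},\dots,Y_{I_m})$, where $I:=(I_1,\dots,I_m)$ is an ordered with-replacement sample of size $m$ from $\{1,\dots,n\}$ (that is, $m$ i.i.d.\ uniform indices) taken independently of $Y$. For the other law, introduce $J:=(J_1,\dots,J_m)$, an ordered \emph{without}-replacement sample of size $m$ from $\{1,\dots,n\}$, uniform over the $n!/(n-m)!$ ordered tuples of distinct indices, again independent of $Y$. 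The $n$-exchangeability of $Y$ gives $\mathcal{L}(Y_{j_1},\dots,Y_{j_m})=\mathcal{L}(Y_1,\dots,Y_m)$ for every fixed tuple of distinct indices $(j_1,\dots,j_m)$ — since any such tuple extends to a permutation of $\{1,\dots,n\}$ — and averaging over $J$ (independent of $Y$) yields $\mathcal{L}(Y_1,\dots,Y_m)=\mathcal{L}(Y_{J_1},\dots,Y_{J_m})$. Hence it suffices to prove $\|\mathcal{L}(Y_{I_1},\dots,Y_{I_m})-\mathcal{L}(Y_{J_1},\dots,Y_{J_m})\|_{\sf TV}\le m(m-1)/(2n)$.

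Next I would construct an explicit coupling of $I$ and $J$ that agrees exactly when the with-replacement draw has no repeated entry. Let $\mu,\nu$ denote the laws of $I,J$ on $\{1,\dots,n\}^m$ and let $D$ be the set of tuples with pairwise-distinct entries. The key observation is that $\mu$ restricted to $D$ assigns mass $n^{-m}$ to each point of $D$, while $\nu$ is uniform on $D$; therefore $\mu|_D=p\,\nu$ with $p:=\mu(D)=\prod_{j=0}^{m-1}(1-j/n)$, so that $\mu=p\,\nu+(\mu-p\,\nu)$ with $\mu-p\,\nu$ a nonnegative measure of total mass $1-p$ supported on $D^{c}$. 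I then couple as follows: with probability $p$ set $I=J\sim\nu$; with probability $1-p$ draw $J\sim\nu$ and, independently, $I\sim(\mu-p\,\nu)/(1-p)$. The marginals are correct and $\mathbb{P}(I\neq J)=1-p$. Choosing $Y$ independent of this pair, the coupling inequality gives $\|\mathcal{L}(Y_{I_1},\dots,Y_{I_m})-\mathcal{L}(Y_{J_1},\dots,Y_{J_m})\|_{\sf TV}\le\mathbb{P}(I\neq J)=1-\prod_{j=0}^{m-1}(1-j/n)\le\sum_{j=1}^{m-1}j/n=m(m-1)/(2n)$, where the last step is the elementary inequality $\prod_j(1-a_j)\ge1-\sum_j a_j$ for $a_j\in[0,1]$.

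I do not expect a genuine obstacle; the argument is short and self-contained. The one place that calls for a little care is the measure decomposition $\mu|_D=p\,\nu$ — i.e.\ that the with-replacement law, conditioned on no repeat, is exactly the without-replacement law — together with keeping the auxiliary variables $I,J$ independent of $Y$ so that the coupling inequality applies to $(Y_{I_k})_{k\le m}$ versus $(Y_{J_k})_{k\le m}$. If one prefers to avoid the abstract decomposition, the identical bound follows from the sequential coupling in which $I_k=J_k$ is maintained as long as no index has yet repeated and the two samples are run independently thereafter; either way the probability of disagreement equals $\mathbb{P}(I\notin D)$, which is controlled as above.
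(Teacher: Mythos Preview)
The paper does not supply its own proof of this statement; Theorem~\ref{the:dia} is merely quoted from \cite[Theorem~13]{diaconis1980finite} as a tool for the proof of Lemma~\ref{lem:3}. Your argument is correct and is essentially the classical coupling proof given in the original Diaconis--Freedman paper: represent $\mathcal{L}(Y_1,\dots,Y_m)$ via sampling without replacement (using exchangeability), represent $\mathcal{L}(Z_1,\dots,Z_m)$ via sampling with replacement (by hypothesis), couple the two index vectors so that they coincide on the event that the with-replacement draw produces no repeated index, and bound the complementary probability by $1-\prod_{j=0}^{m-1}(1-j/n)\le\sum_{j=1}^{m-1}j/n=m(m-1)/(2n)$.
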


\begin{theorem}\cite[Proposition 7.20]{aldous2006ecole}\label{the:ald}
Let $X:=(X_{1},X_{2},\dots)$ be an infinitely exchangeable sequence of random variables taking values in a Polish space $\mathbb{X}$. Suppose that $X$ is directed by a random measure $\alpha$, i.e., $\alpha$ is a $\mathcal{P}(\mathbb{X})$-valued random variable and ${\sf Pr}(X\in A)=\int_{\mathcal{P}(\mathbb{X})} \prod_{i=1}^{\infty}\xi(A^{i})\:{\sf Pr}(\alpha\in d\xi)$ where $A^{i}\in \mathcal{B}(\mathbb{X})$ and $(A=A^{1}\times A^{2}\times \dots)$ (see \cite[Definition 2.6]{aldous2006ecole}). Suppose further that either for each $n$
\begin{itemize}
\item[(1)] $X^{(n)}=(X_{1}^{(n)},X_{2}^{(n)},\dots)$  is infinitely exchangeable directed by $\alpha_{n}$, or
\item[(2)]  $X^{(n)}=(X_{1}^{(n)},\dots, X_{n}^{(n)})$  is $n$-exchangeable with empirical measure $\alpha_{n}$.
\end{itemize}
Then, $X^{(n)}$ converges in distribution\footnote{By convergence in distribution to an infinitely exchangeable sequence, we mean the following: $X^{(n)} \xrightarrow[n \to \infty]{{\sf d}} X$ if and only if $(X_{1}^{(n)},\dots, X_{m}^{(n)})\xrightarrow[n \to \infty]{{\sf d}}(X_{1},\dots, X_{m})$ for each $m\geq 1$; see \cite[page 55]{aldous2006ecole}.} to $X$ as $n\to \infty$ (i.e., $X^{(n)}\xrightarrow[n \to \infty]{{\sf d}}X$)  if and only if $\alpha_{n}\xrightarrow[n \to \infty]{{\sf d}}\alpha$.
\end{theorem}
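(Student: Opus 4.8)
The plan is to reduce both cases of the equivalence to a single family of test functionals together with one clean identity linking them to finite-dimensional marginals. For $m\geq 1$ and $g_1,\dots,g_m\in\mathcal{C}_b(\mathbb{X})$, I introduce the polynomial functional $\Phi_{g_1,\dots,g_m}(\xi):=\prod_{j=1}^m\int_{\mathbb{X}}g_j\,d\xi$ on $\mathcal{P}(\mathbb{X})$. The central observation is the identity
\[
\mathbb{E}\big[\Phi_{g_1,\dots,g_m}(\alpha_n)\big]=\mathbb{E}\Big[\prod_{j=1}^m g_j\big(X_j^{(n)}\big)\Big]+r_n,
\]
with $r_n=0$ in case (1) and $r_n=O(1/n)$ in case (2). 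In case (1) this is exact: conditioning on the directing measure $\alpha_n$ and using that the coordinates are conditionally i.i.d.\ with law $\alpha_n$ gives $\mathbb{E}[\prod_j g_j(X_j^{(n)})\mid\alpha_n]=\prod_j\int g_j\,d\alpha_n$. In case (2) I would expand $\Phi_{g_1,\dots,g_m}(\alpha_n)=n^{-m}\sum_{i_1,\dots,i_m}\prod_j g_j(X_{i_j}^{(n)})$; by $n$-exchangeability the $n^{(m)}:=n(n-1)\cdots(n-m+1)$ terms with distinct indices each contribute $\mathbb{E}[\prod_j g_j(X_j^{(n)})]$, while the remaining $O(n^{m-1})$ diagonal terms are bounded by $\prod_j\|g_j\|_\infty$, so since $n^{(m)}/n^m=1-O(1/n)$ the remainder is $O(1/n)$. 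The same identity with $r=0$ holds for the limit $X$ directed by $\alpha$ by de Finetti's theorem (Kallenberg, \cite[Theorem 1.1]{kallenberg2006probabilistic}), which also recovers the directing measure a.s.\ as the limit of empirical averages.

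First I would dispatch the direction $\alpha_n\xrightarrow{d}\alpha\Rightarrow X^{(n)}\xrightarrow{d}X$. Each $\Phi_{g_1,\dots,g_m}$ is bounded by $\prod_j\|g_j\|_\infty$ and continuous on $\mathcal{P}(\mathbb{X})$ in the weak topology, since $\xi\mapsto\xi^{\otimes m}$ is weakly continuous; hence $\alpha_n\xrightarrow{d}\alpha$ immediately yields $\mathbb{E}[\Phi(\alpha_n)]\to\mathbb{E}[\Phi(\alpha)]$. Combined with the identity (and $r_n\to 0$), this gives $\mathbb{E}[\prod_j g_j(X_j^{(n)})]\to\mathbb{E}[\prod_j g_j(X_j)]$ for all $g_1,\dots,g_m\in\mathcal{C}_b(\mathbb{X})$. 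Taking $m=1$ already gives $X_1^{(n)}\xrightarrow{d}X_1$, so the one-dimensional marginals are tight; by exchangeability every $m$-marginal $(X_1^{(n)},\dots,X_m^{(n)})$ is then tight, and since products $\prod_j g_j$ separate measures on $\mathbb{X}^m$ this upgrades to weak convergence of all finite-dimensional marginals, i.e.\ $X^{(n)}\xrightarrow{d}X$ in the sense of the footnote.

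The converse $X^{(n)}\xrightarrow{d}X\Rightarrow\alpha_n\xrightarrow{d}\alpha$ is where the real work lies, and I expect it to be the main obstacle. Running the identity backwards, $X^{(n)}\xrightarrow{d}X$ gives $\mathbb{E}[\Phi(\alpha_n)]\to\mathbb{E}[\Phi(\alpha)]$ for every polynomial functional; the difficulty is upgrading this convergence of ``moments'' to convergence in distribution of the $\mathcal{P}(\mathbb{X})$-valued variables $\alpha_n$. My plan has two steps. Step one is tightness of $\{\mathcal{L}(\alpha_n)\}$ in $\mathcal{P}(\mathcal{P}(\mathbb{X}))$: in both cases the mean measures satisfy $\mathbb{E}[\alpha_n(\cdot)]=\mathcal{L}(X_1^{(n)})$, which converges and is therefore tight, so a Markov-inequality argument (Prokhorov's criterion for random measures) makes the family $\{\alpha_n\}$ tight, and every subsequence has a further subsequence with $\alpha_{n'}\xrightarrow{d}\beta$. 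Step two is to identify $\beta$: passing the moment convergence to the limit yields $\mathbb{E}[\Phi_{g_1,\dots,g_m}(\beta)]=\mathbb{E}[\Phi_{g_1,\dots,g_m}(\alpha)]$ for all $g_j$, and I would argue these polynomial functionals determine the law of a random probability measure, forcing $\beta\stackrel{d}{=}\alpha$; the subsequence principle then gives $\alpha_n\xrightarrow{d}\alpha$. The delicate point is moment-determinacy for general Polish $\mathbb{X}$, and the clean route is to embed $\mathbb{X}$ homeomorphically into a compact metric space $\widehat{\mathbb{X}}$ (e.g.\ the Hilbert cube), where $\mathcal{P}(\widehat{\mathbb{X}})$ is compact and the algebra generated by $\{\xi\mapsto\int g\,d\xi:g\in\mathcal{C}(\widehat{\mathbb{X}})\}$ is dense in $\mathcal{C}(\mathcal{P}(\widehat{\mathbb{X}}))$ by Stone--Weierstrass, so moment convergence forces weak convergence of $\mathcal{L}(\alpha_n)$; the tightness from step one then guarantees no mass escapes to $\widehat{\mathbb{X}}\setminus\mathbb{X}$, transferring the conclusion back to $\mathcal{P}(\mathbb{X})$. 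This compactification-plus-tightness argument is the technical heart of the proof.
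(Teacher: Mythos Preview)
The paper does not contain a proof of this statement: Theorem~\ref{the:ald} is quoted verbatim from Aldous \cite[Proposition~7.20]{aldous2006ecole} and used as a black box in the proof of Lemma~\ref{lem:3}, so there is no ``paper's own proof'' to compare against.

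That said, your argument is sound and is essentially the standard route. The key identity $\mathbb{E}[\Phi_{g_1,\dots,g_m}(\alpha_n)]=\mathbb{E}[\prod_j g_j(X_j^{(n)})]+r_n$ is correct in both cases, and your forward direction is clean. For the converse, your two-step plan (tightness of $\{\mathcal{L}(\alpha_n)\}$ via the intensity measures, then identification of subsequential limits via Stone--Weierstrass on a compactification) is exactly the right structure; the compactification trick to obtain moment-determinacy is the standard way to handle a general Polish $\mathbb{X}$, and the tightness from step one is what prevents mass from escaping. One small point worth making explicit: in step two you need that the polynomial functionals separate \emph{points} of $\mathcal{P}(\widehat{\mathbb{X}})$ (not just measures), which holds because if $\int g\,d\xi=\int g\,d\xi'$ for all $g\in\mathcal{C}(\widehat{\mathbb{X}})$ then $\xi=\xi'$; this is immediate but should be stated before invoking Stone--Weierstrass.
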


We now prove Lemma \ref{lem:3} using Theorem \ref{the:dia} and Theorem \ref{the:ald}.

\begin{proof}[Proof of Lemma \ref{lem:3}]
By Theorem \ref{the:5}, for every finite $N\geq 1$, there exists an exchangeable randomized globally optimal policy ${P_{\pi}^{N\star}}\in \LEXN$ for \PN\ among all randomized policies $L^{N}$. The proof of  Lemma \ref{lem:3} proceeds in two steps. In the first step, for every $N\geq 1$ and  $P_{\pi}^{N\star} \in  \LEXN$, we construct an infinitely-exchangeable randomized policy $P^{N\star,\infty}_{\pi} \in \LEX$ using Theorem \ref{the:dia}, by assigning i.i.d. random indices with uniform distribution on the set $\{1,\dots,N\}$.  Using Theorem \ref{the:dia} and compactness of $\LEX$, we show that sequences $\{P^{N\star,\infty}_{\pi}\}_N$ and $\{P_{\pi}^{N\star}\}_N$ converge to the same limit $P^{\star,\infty}_{\pi}\in \LEX$ through a subsequence. Then, using Theorem \ref{the:ald}, we establish that the induced states and actions under $P_{\pi}^{N\star}$ as well as their empirical measures converge weakly to a limit. In the second step, we show that the expected cost induced by $P_{\pi}^{N\star}$  converges to a limit induced by the limiting policy $P^{\star,\infty}_{\pi}\in \LEX$.

\begin{itemize}
    \item [{\it Step 1.}] By Theorem \ref{the:5}, for every finite $N\geq 1$, there exists an exchangeable randomized globally optimal policy ${P_{\pi}^{N\star}}\in \LEXN$ for \PN. Consider random i.i.d.  variables $\{I_{i}\}_{i\geq 1}$ with uniform distribution on $\{1,\ldots,N\}$. For any $N\geq 1$ and  $P_{\pi}^{N\star} \in  \LEXN$, define an infinitely-exchangeable randomized policy $P^{N\star,\infty}_{\pi} \in \LEX$ such that for every $N\geq 1$ and for all $A^{i} \in \mathcal{B}(\Gamma^{i})$
\begin{flalign*}
&P^{N\star,\infty}_{\pi,N}(\pmb{\gamma^{1}}\in A^{1},\ldots, \pmb{\gamma^{m}}\in A^{m}):=P_{\pi}^{N\star}(\pmb{\gamma^{I_{1}}}\in A^{1},\ldots, \pmb{\gamma^{I_{m}}}\in A^{m})\quad m\leq N,
\end{flalign*}
where $P^{N\star,\infty}_{\pi,N}\in \LEX|_N$ is the restriction of $P^{N\star,\infty}_{\pi}\in  \LEX$ to its first $N$ components.
The fact that $\{I_{i}\}_{i\geq 1}$ are i.i.d, yields that for each $N\geq 1$, $P^{N\star,\infty}_{\pi}\in \LEX$, and hence, $\{P^{N\star,\infty}_{\pi}\} \subseteq \LEX$. 
Let $\pmb{\gamma^{1\star:N\star}_{N}}:=(\pmb{\gamma^{1\star}_{N}}, \ldots, \pmb{\gamma^{N\star}_{N}})$ and $\pmb{\gamma^{1\star:N\star}_{N,\infty}}:=(\pmb{\gamma^{1\star}_{N,\infty}}, \ldots, \pmb{\gamma^{N\star}_{N,\infty}})$ be the policies induced by  randomized policies $P^{N\star,\infty}_{\pi, N}$ 
 and $P^{N\star}_{\pi}$, respectively. Theorem \ref{the:dia} yields that for any finite $m\geq 1$
\begin{flalign}
&\lim_{N\to \infty}\left\|\mathcal{L}(\pmb{\gamma^{1\star}_{N}},\dots,\pmb{\gamma^{m\star}_{N}})-\mathcal{L}(\pmb{\gamma^{1\star}_{N,\infty}},\dots,\pmb{\gamma^{m\star}_{N,\infty}})\right\|_{\sf TV}=0.\label{eq:poA.7}
\end{flalign}

Since $\LEX$ is compact, $\{(\pmb{\gamma^{1\star}_{N,\infty}},\pmb{\gamma^{2\star}_{N,\infty}}, \ldots)\}_{N}$ admits a subsequence (denoted by the index $n$) such that $\{(\pmb{\gamma^{1\star}_{n,\infty}},\pmb{\gamma^{2\star}_{n,\infty}}, \ldots)\}_n$ converges to an infinitely-exchangeable sequence of random variables $(\pmb{\gamma^{1\star}_{\infty}},\pmb{\gamma^{1\star}_{\infty}}, \ldots)$ in distribution as $n\to \infty$. By \eqref{eq:poA.7}, this implies that $\{(\pmb{\gamma^{1\star}_{n}},\pmb{\gamma^{1\star}_{n}}, \ldots)\}_n$ also converges to $(\pmb{\gamma^{1\star}_{\infty}},\pmb{\gamma^{1\star}_{\infty}}, \ldots)$ in distribution as $n\to \infty$ (i.e., it converges in distribution for every finite marginal). By Lemma \ref{lem:2}, this implies that the induced probability measures on states converge weakly, and hence, the sequence of states $\{(\pmb{X}^{1\star}_{n},\pmb{X}^{2\star}_{n}, \ldots)\}_{n}$ induced under  $\{(\pmb{\gamma^{1\star}_{n}},\pmb{\gamma^{1\star}_{n}}, \ldots)\}_n$ converges in distribution to the states $(\pmb{X}^{1\star}_{\infty},\pmb{X}^{2\star}_{\infty}, \ldots)$ induced by $(\pmb{\gamma^{1\star}_{\infty}},\pmb{\gamma^{1\star}_{\infty}}, \ldots)$ as $n\to \infty$. We have also the sequence of the actions $\{(\pmb{U}^{1\star}_{n},\pmb{U}^{2\star}_{n}, \ldots)\}_{n}$ induced under $\{(\pmb{\gamma^{1\star}_{n}},\pmb{\gamma^{1\star}_{n}}, \ldots)\}_n$ are $n$-exchangeable and converges in distribution to the infinitely-exchangeable actions $(\pmb{U}^{1\star}_{\infty},\pmb{U}^{2\star}_{\infty}, \ldots)$ induced by $(\pmb{\gamma^{1\star}_{\infty}},\pmb{\gamma^{1\star}_{\infty}}, \ldots)$ as $n\to \infty$.

Thanks to Theorem \ref{the:ald}, we conclude that the sequence of empirical measures on the induced states and actions
\begin{align*}
     Q_{n}^t(\cdot) &= \frac{1}{n}\sum_{i=1}^{n} \delta_{\{{X}^{1\star}_{n,t},{U}^{1\star}_{n,t}\}}(\cdot)
\end{align*}
converges weakly to $Q_{\infty}^t:=\mathcal{L}({X}^{i\star}_{\infty,t},{U}^{i\star}_{\infty,t})$ for every $t\in \mathbb{T}$. Since the path space is continuous and the action space is compact, we can conclude that the empirical measure of the path space also converges weakly. By Skorohod’s representation theorem, there exists a probability space in which, all the convergence mentioned above is in distribution but almost surely in $\omega$. Hence, the marginals of empirical measures of on the induced states and actions  $${\sf E}_{n,t}^{\pmb{X^\star}}:=\frac{1}{n}\sum_{i=1}^{n} \delta_{{X}^{i\star}_{n,t}}, \quad {\sf E}_{n,t}^{\pmb{U^\star}}:=\frac{1}{n}\sum_{i=1}^{n} \delta_{{U}^{i\star}_{n,t}}$$  converge weakly to ${\sf E}_{\infty,t}^{\pmb{X^\star}}:=\mathcal{L}({X}^{i\star}_{\infty,t})$ and ${\sf E}_{\infty,t}^{\pmb{U^\star}}:=\mathcal{L}({U}^{i\star}_{\infty,t})$ almost surely in $\omega$, respectively. 

\item [{\it Step 2.}] Let ${\sf E}_{n}^{\pmb{X^\star}}:=\{{\sf E}_{n,t}^{\pmb{X^\star}}\}_{t\in \mathbb{T}}$, ${\sf E}_{n}^{\pmb{U^\star}}:=\{{\sf E}_{n,t}^{\pmb{U^\star}}\}_{t\in \mathbb{T}}$, ${\sf E}_{\infty}^{\pmb{X^\star}}:=\{{\sf E}_{\infty,t}^{\pmb{X^\star}}\}_{t\in \mathbb{T}}$, and ${\sf E}_{\infty}^{\pmb{U^\star}}:=\{{\sf E}_{\infty,t}^{\pmb{U^\star}}\}_{t\in \mathbb{T}}$. Following step 1, for every $t\in \mathbb{T}$, $\nu^{n\star}_t$, the joint distribution on $\pmb{X^{i\star}_{n}},\pmb{U^{i\star}_{n}},{\sf E}_{n}^{\pmb{X^\star}}, {\sf E}_{n}^{\pmb{U^\star}}$, converges in distribution (almost surely in $\omega$) converges to $\nu^{\star}_t$, the joint distribution on $\pmb{X^{i\star}_{\infty}},\pmb{U^{i\star}_{\infty}},{\sf E}_{\infty}^{\pmb{X^\star}}, {\sf E}_{\infty}^{\pmb{U^\star}}$. Hence, we can show that
\begin{align}
&\limsup_{N\to \infty}\inf_{P_{\pi}^{N}\in \LEXN}\int \left(\hat{c}_{N}(\pmb{X^{1:N}},\pmb{\gamma^{1:N}})\prod_{i=1}^{N}\mu^i(d\pmb{X^{i}};\pmb{\gamma^{i}})\right) P_{\pi}^{N}(d\pmb{\gamma^{1:N}})\nonumber\\
&= \lim_{n\to \infty} \int \left(\int \int_{0}^{T} \frac{1}{n}\sum_{i=1}^{n}\hat{c}(X_{t}^{i},U_{t}^{i},{\sf E}_{n,t}^{\pmb{X^\star}},{\sf E}_{n,t}^{\pmb{U^\star}} )\prod_{i=1}^{n}\pmb{\gamma^{i}}(d{U^{i}},dt)\mu^i(d\pmb{X^{i}};\pmb{\gamma^{i}})\right) P_{\pi}^{n\star}(d\pmb{\gamma}^{1:N})\nonumber\\
&=\lim_{n\to \infty} \int \int \int_{0}^{T} \hat{c}(X_{n,t}^{1\star},U_{t,n}^{1\star},{\sf E}_{n,t}^{\pmb{X^\star}},{\sf E}_{n,t}^{\pmb{U^\star}} )\nu^{n\star}_t\left(d\pmb{X^{1}},d\pmb{U^{1}},d{\sf E}_{n}^{\pmb{X^\star}},d{\sf E}_{n}^{\pmb{U^\star}}\right)dt\label{pf-s2-lem3-1}\\
&=\int \int \int_{0}^{T} \hat{c}(X_{t}^{1},U_{t}^{1},{\sf E}_{\infty,t}^{\pmb{X^\star}},{\sf E}_{\infty,t}^{\pmb{U^\star}} )\nu^{\star}_t\left(d\pmb{X^{1}},d\pmb{U^{1}},d{\sf E}_{\infty}^{\pmb{X^\star}},d{\sf E}_{\infty}^{\pmb{U^\star}}\right)dt\label{pf-s2-lem3-2}\\
&\geq \limsup_{N\to \infty}\inf_{P_{\pi}\in \LEX}\int \left(\hat{c}_{N}(\pmb{X}^{1:N},\pmb{\gamma^{1:N}})\prod_{i=1}^{N}\mu^i(d\pmb{X^{i}};\pmb{\gamma^{i}})\right) P_{\pi,N}(d\pmb{\gamma^{1:N}})\label{pf-s2-lem3-3},
\end{align}
where \eqref{pf-s2-lem3-1} follows from the fact that the induced $\pmb{X^{1\star:n\star}}$, $\pmb{U^{1\star:n\star}}$, and $\pmb{\gamma^{1\star:n\star}}$ are exchangeable. Equality \eqref{pf-s2-lem3-2} follows from step 1 using the generalized dominated convergence theorem, and \eqref{pf-s2-lem3-3} follows from the fact that the limiting policy is infinitely exchangeable. This completes the proof. 
\end{itemize}
\end{proof}

\section{Proof of Theorem \ref{the:6}}\label{app:the:6}


 We have
\begin{flalign}
&\inf\limits_{P_{\pi} \in L}\limsup\limits_{N \to \infty} \int \left(\widehat{c}_{N}(\pmb{X}^{1:N},\pmb{\gamma^{1:N}})\prod_{i=1}^{N}\mu^i(d\pmb{X^{i}};\pmb{\gamma^{i}})\right) P_{\pi,N}(d\pmb{\gamma^{1:N}})\nonumber\\
&\geq\limsup\limits_{N \to \infty}\inf\limits_{P_{\pi}^{N} \in L^{N}}\int \left(\widehat{c}_{N}(\pmb{X}^{1:N},\pmb{\gamma^{1:N}})\prod_{i=1}^{N}\mu^i(d\pmb{X^{i}};\pmb{\gamma^{i}})\right) P_{\pi}^{N}(d\pmb{\gamma^{1:N}})\label{eq:q4.1}\\
&=\limsup\limits_{N \to \infty}\inf\limits_{P_{\pi}^{N} \in \LEXN}\int \left(\widehat{c}_{N}(\pmb{X}^{1:N},\pmb{\gamma^{1:N}})\prod_{i=1}^{N}\mu^i(d\pmb{X^{i}};\pmb{\gamma^{i}})\right) P_{\pi}^{N}(d\pmb{\gamma^{1:N}})\label{eq:q4.2}\\
&\geq  \lim\limits_{M \to \infty}\limsup\limits_{N \to \infty}\inf\limits_{P_{\pi}^{N} \in \LEXN}\int \left(\min{\{M,\widehat{c}_{N}(\pmb{X}^{1:N},\pmb{\gamma^{1:N}})\}}\prod_{i=1}^{N}\mu^i(d\pmb{X^{i}};\pmb{\gamma^{i}})\right) P_{\pi}^{N}(d\pmb{\gamma^{1:N}})\label{eq:q4.s2}\\
&=  \lim\limits_{M \to \infty}\limsup\limits_{N \to \infty}\inf\limits_{P_{\pi} \in \LEX}\int \left(\min{\{M,\widehat{c}_{N}(\pmb{X}^{1:N},\pmb{\gamma^{1:N}})\}}\prod_{i=1}^{N}\mu^i(d\pmb{X^{i}};\pmb{\gamma^{i}})\right) P_{\pi,N}(d\pmb{\gamma^{1:N}})\label{eq:q4.3}\\
&= \lim\limits_{M \to \infty}\limsup\limits_{N \to \infty}\inf\limits_{P_{\pi}^{N} \in \LCOSN}\int \left(\min{\{M,\widehat{c}_{N}(\pmb{X}^{1:N},\pmb{\gamma^{1:N}})\}}\prod_{i=1}^{N}\mu^i(d\pmb{X^{i}};\pmb{\gamma^{i}})\right) P_{\pi}^{N}(d\pmb{\gamma^{1:N}})\label{eq:4.1}\\
&= \lim\limits_{M \to \infty}\limsup\limits_{N \to \infty}\inf\limits_{P_{\pi}^{N} \in \LPRS^{N}}\int \left(\min{\{M,\widehat{c}_{N}(\pmb{X}^{1:N},\pmb{\gamma^{1:N}})\}}\prod_{i=1}^{N}\mu^i(d\pmb{X^{i}};\pmb{\gamma^{i}})\right) P_{\pi}^{N}(d\pmb{\gamma^{1:N}})\label{eq:4.3}\\
&\geq \inf\limits_{P_{\pi} \in \LPRS}\limsup\limits_{N \to \infty}  \int \left(\widehat{c}_{N}(\pmb{X}^{1:N},\pmb{\gamma^{1:N}})\prod_{i=1}^{N}\mu^i(d\pmb{X^{i}};\pmb{\gamma^{i}})\right) P_{\pi,N}(d\pmb{\gamma^{1:N}})\label{eq:4.4}\\
&\geq\inf\limits_{P_{\pi} \in L} \limsup\limits_{N \to \infty}  \int \left(\widehat{c}_{N}(\pmb{X}^{1:N},\pmb{\gamma^{1:N}})\prod_{i=1}^{N}\mu^i(d\pmb{X^{i}};\pmb{\gamma^{i}})\right) P_{\pi,N}(d\pmb{\gamma^{1:N}})\label{eq:4.6},
\end{flalign}
where \eqref{eq:q4.1} follows from exchanging limsup with infimum. Equality \eqref{eq:q4.2} follows from Theorem \ref{the:4} and \eqref{eq:q4.s2} follows from the fact that $\min{\{M, \widehat{c}_{N}(\pmb{X}^{1:N},\pmb{\gamma^{1:N}})\}}\leq \widehat{c}_{N}(\pmb{X}^{1:N},\pmb{\gamma^{1:N}})$. Equality \eqref{eq:q4.3} follows from Lemma \ref{lem:3}, and \eqref{eq:4.1} follows from de Finetti Theorem in Lemma \ref{lem:0}(i) since $\LEX=\LCOS$ and hence their restrictions to $N$ first components are the same $\LEX|_N=\LCOSN$. The set of extreme points of the convex set $\LCOSN$ is in $\LPRS^N$ (that is because, $\LCOSN$ corresponds to the randomized policies with common and individual independent randomness, where each DM selects an identical randomized policy), hence, \eqref{eq:4.3} is true since $\LCOSN$ is convex, and the map $ J^{\pi}_{N}:\LCOSN \to \mathbb{R}$ is linear. Inequality \eqref{eq:4.4} follows from an analogous argument as that used in parts (i)--(ii) of the proof of Theorem \ref{the:4}. Inequality \eqref{eq:4.6} follows from the fact that $\LPRS \subset L$. Hence, the chain of inequalities \eqref{eq:q4.1}--\eqref{eq:4.6} is the chain of equalities. 

\section{Proof of Theorem \ref{the:7}}\label{APP:pfthe7}

Following Theorem \ref{the:5-coup}, we can restrict our search for an optimal solution to the set of exchangeable policies $\LEXN$ for \PN\ without any loss of optimality. Let $\epsilon\geq0$ and  $P_{\pi}^{N\star} \in  \LEXN$ an $\epsilon$-optimal policy for \PN, i.e.,
\begin{align}\label{eq:asy32.1}
   J^{\pi}_{N}(P_{\pi}^{ N\star}) \leq \inf\limits_{P_{\pi}^{N} \in \LEXN}J^{\pi}_{N}(P_{\pi}^{ N}) + \epsilon
\end{align}
Following from the proof of Lemma \ref{lem:3-coup}, using $\{P_{\pi}^{N\star}\}_{N} \subseteq  \LEXN$, by considering the indexes as a sequence of i.i.d. random variables with uniform distribution on the set $\{1,\dots,N\}$, we can construct a sequence of  infinitely exchangeable policies $\{P_{\pi}^{N,\infty\star}\}$ where the restriction of an infinitely exchangeable policy to $N$ first components, $P^{\infty\star}_{\pi, N} \in \LEX\big{|}_{N}$, satisfies 
\begin{flalign}
& J^{\pi}_{N}(P^{\infty\star}_{\pi, N}) \leq  J^{\pi}_{N}(P_{\pi}^{ N\star})+\widehat{\epsilon_{N}
}\label{eq:asymendif}.
\end{flalign}
for some $\widehat{\epsilon_{N}
}\geq 0$ such that $\widehat{\epsilon_{N}
}$ goes to $0$ as $N\to \infty$. Since $\epsilon>0$ is arbitrary, \eqref{eq:asy32.1} and \eqref{eq:asymendif} imply that
\begin{flalign}\label{eq:6.49}
\inf\limits_{P_{\pi}^{N} \in \LEX|_N}J^{\pi}_{N}(P_{\pi}^{ N})\leq \inf\limits_{P_{\pi}^{N} \in \LEXN}J^{\pi}_{N}(P_{\pi}^{ N})+\widehat{\epsilon_{N}
}.
\end{flalign}
 By Lemma \ref{lem:0}(i), $\LEX=\LCOS$ and hence their restrictions to $N$ first components are the same $\LEX|_N=\LCOSN$. Since $\LCOSN$ is convex with extreme points in  $\LPRS^N$, the linearity of the expected cost in randomized policies together with \eqref{eq:6.49} yields that
 \begin{flalign}\label{eq:approx-exc}
\inf\limits_{P_{\pi}^{N} \in \LPRS^N}J^{\pi}_{N}(P_{\pi}^{ N})\leq \inf\limits_{P_{\pi}^{N} \in \LEXN}J^{\pi}_{N}(P_{\pi}^{ N})+\widehat{\epsilon_{N}
}.
\end{flalign}
 Let $P_{\pi}^{\star}\in \LPRS$ be an optimal policy for \PIN\ and $P_{\pi,N}^{\star}$ is the restriction of $P_{\pi}^{
\star}$ to the first $N$ components. Following from the proof of Theorem \ref{the:6-coup}, since the cost function is bounded, we have
\begin{flalign}
J_{N}^{\pi}(P_{\pi,N}^{\star})\leq \inf\limits_{P_{\pi}^{N} \in \LPRS^N} J_{N}^{\pi}(P_{\pi}^{N})+\widetilde{\epsilon_{N}}\label{eq:L23}
\end{flalign}
for some $\widetilde{\epsilon_{N}}\geq 0$ such that $\widetilde{\epsilon_{N}
}$ goes to $0$ as $N\to \infty$. Letting $\epsilon_{N}=\widetilde{\epsilon_{N}}+\widehat{\epsilon_{N}}$,  \eqref{eq:L23} and \eqref{eq:approx-exc} imply that
\begin{flalign}
J_{N}^{\pi}(P_{\pi,N}^{\star})\leq \inf\limits_{P_{\pi}^{N} \in \LEX^N} J_{N}^{\pi}(P_{\pi}^{N})+{\epsilon_{N}},\label{eq:approxl}
\end{flalign}
which completes the proof thanks to Theorem \ref{the:5-coup}.

\section{Proof of Lemma \ref{lem:2-coup}}\label{app:lem5}
  The set $\Gamma^i$ is tight for all $i\in \mathcal{N}$ since marginals of $\mathcal{P}(\mathbb{U}\times [0,T])$ on $[0,T]$ is fixed and $\mathbb{U}$ is compact.  Consider a sequence of $\{\pmb{\gamma^{i}_k}(\omega)\}_{k}\subseteq \Gamma^{i}$ that converges to $\pmb{\gamma^{i}}(\omega)$ weakly as $k\to \infty$ for all $i\in \mathcal{N}$. {Following \cite[Theorem 4.21]{carmona2018probabilistic}, using Banach fixed point Theorem, \eqref{eq:mf-dynamic-dec} 
  admits a unique strong solution for any open-loop adapted policy. This implies that the strong unique solution $\pmb{X^i_{k}}$ of  \eqref{eq:mf-dynamic-dec} is also adapted. The unique weak solution  $\pmb{X^i_{k}}$ of  \eqref{eq:mf-dynamic-dec} exists under non-anticipative policy $\pmb{\gamma^{i}_k}$. 
  Hence, the unique weak solution of \eqref{eq:mf-dynamic-dec} is also the unique weak solution of the following martingale equation
    \begin{align}\label{mart-20}
        \mathbb{E}_{x_0}\left[f(X_{t,k}^{i})-f(X_{0,k}^i)-\int_{0}^{t}\int_{\mathbb{U}}\mathcal{A}^{{U^{i}}}(X_{s,k}^{i})\pmb{\gamma^{i}_k}(d{U^{i}},ds)\right] = 0,
    \end{align}
 for any twice continuously differentiable function $f:\mathbb{X}\to \mathbb{R}$,   where 
\begin{align}
\mathcal{A}^{{U^{i}}}(X_{s,k}^{i}):={\sf Tr}\left(a_{s}(X_{s,k}^i)\nabla^2 f(X_{s,k}^{i})\right)+b_{s}(X_{s,k}^{i},{U^{i}}, \mathcal{L}(X_{s,k}^{i}),\mathcal{L}({U^{i}}))\cdot \nabla f(X_{s,k}^{i}).
\end{align}
Using Aldous' criterion for tightness in \cite[Theorem 16.11]{kallenberg2021foundations},  a family of $\mathbb{R}^d$-valued continuous stochastic processes $\{\pmb{X^i_{k}}\}_k$ is tight since $b_t$ and $\sigma_t$ are bounded.} Hence, there exists a subsequence $\{\pmb{X^i_{n}}\}_n$  that converges in distribution to $\pmb{X^i}$ as $n\to \infty$. By Skorohod’s representation theorem, there exists a probability space in which $\{\pmb{\gamma^{i}_n}(\omega)\}_{n}$ and $\{\pmb{X^i_{n}}(\omega)\}$ converge to $\pmb{\gamma^{i}}(\omega)$ and $\pmb{X^i}(\omega)$ almost surely (in $\omega$),  respectively as $n\to \infty$. By the
generalized dominated convergence \cite[Theorem 3.5]{serfozo1982convergence}, using an analogous argument as that utilized in the proof of Lemma \ref{lem:2}, we get
\begin{align}\label{mart-2-dec}
\mathbb{E}_{x_0}\bigg[&f(X_{t}^{i})-f(X_{0}^i)\nonumber\\
&-\int_{0}^{t}  \left({\sf Tr} \left(a_{s}(X_{s}^i)\nabla^2 f(X_{s}^{i})\right)+\int_{\mathbb{U}}b_{s}(X_{s}^{i},{U^{i}},\mathcal{L}(X_{s}^{i}),\mathcal{L}({U^{i}}))\cdot \nabla f(X_{s}^{i})\pmb{\gamma^{i}}(d{U^{i}}|s)\right)ds\bigg] = 0
\end{align}
for any twice continuously differentiable function $f:\mathbb{X}\to \mathbb{R}$. This implies that $\pmb{X^i}$ solves the martingale equation, and hence, it is a solution of SDE \eqref{eq:mf-dynamic-dec}. This completes the proof.

\section{Proof of Lemma \ref{lem:contZ}}\label{app:lem-con}
Let $\{\pmb{\gamma^{i}_n}(\omega)\}_{n}$ and $\{\pmb{X^i_{n}}(\omega)\}_n$ converge weakly to $\pmb{\gamma^{i}}(\omega)$ and $\pmb{X^i}(\omega)$ almost surely (in $\omega$), respectively as $n\to \infty$. Let $Z^{i,n}_{N}:={\sf exp}(A_n^i-B_n^i)$ and $Z^{i, \infty}_{N}:={\sf exp}(A^i-B^i)$ with $A_{n}^{i},B_{n}^{i},A^i$ and $B^i$ as \eqref{eq:A-B1}--\eqref{eq:A-B4}.
In the following, We use a similar analysis as that in the proof of  \cite[Lemma 2.5]{pradhan2023controlled} to show that 
\begin{align}
    \lim_{n\to \infty} \mathbb{E} \left[\left|\prod_{i=1}^{N}Z_{N}^{i,n}-\prod_{i=1}^{N}Z_{N}^{i, \infty}\right|\right]=0.
\end{align}
 We have 
\begin{align}
    &\left|{\sf exp}(\sum_{i=1}^{N}(A_n^i-B_n^i))-{\sf exp}(\sum_{i=1}^{N}(A^i-B^i))\right|\nonumber\\
    &\leq \left|\sum_{i=1}^{N}((A_n^i-A^i)-(B_n^i-B^i))\right| \max\left({\sf exp}(\sum_{i=1}^{N}(A_n^i-B_n^i)),{\sf exp}(\sum_{i=1}^{N}(A^i-B^i))\right)\label{eq:expon}
\end{align}
By Cauchy-Schwarz inequality, we have
\begin{align}\label{eq:Z-term}
    \left(\mathbb{E} \left[\left|\prod_{i=1}^{N}Z_{N}^{i,n}-\prod_{i=1}^{N}Z_{N}^{i}\right|\right]\right)^2&\leq \mathbb{E} \left[\left|\sum_{i=1}^{N}((A_n^i-A^i)-(B_n^i-B^i))\right|^2\right]\\
    &\times \mathbb{E} \left[\max\left({\sf exp}(2\sum_{i=1}^{N}(A_n^i-B_n^i)),{\sf exp}(2\sum_{i=1}^{N}(A^i-B^i))\right)\right]
\end{align}
 We have
\begin{align}
    \mathbb{E} \left[\left|\sum_{i=1}^{N}((A_n^i-A^i)-(B_n^i-B^i))\right|^2\right]\leq 2N^2\sup_{1\leq i\leq N} \max\left(\mathbb{E}[|A_{n}^i-A^i|^2],\mathbb{E}[|B_{n}^i-B^i|^2]\right)\label{eq:AB-term}
\end{align}
Let 
\begin{align*}
    Y_{t,n}^{i}:=&\sigma^{-1}_t(X_{t,n}^{i})\int\overline{b_{t}}\left(X_{t,n}^i,U_{t}^i, \frac{1}{N}\sum_{p=1}^{N} \delta_{X^{p}_{t,n}},\frac{1}{N}\sum_{p=1}^{N} \delta_{U^{p}_t}\right)\prod_{i=1}^{N}\pmb{\gamma^i_n}(d{U^i}|t)\\
    &-\sigma^{-1}_t(X_{t}^{i})\int\overline{b_{t}}\left(X_{t}^i,U_{t}^i, \frac{1}{N}\sum_{p=1}^{N} \delta_{X^{p}_{t}},\frac{1}{N}\sum_{p=1}^{N} \delta_{U^{p}_t}\right)\prod_{i=1}^{N}\pmb{\gamma^i}(d{U^i}|t).
\end{align*}
For every $i=1, \ldots, N$, we have
\begin{align*}
  \lim_{n\to \infty}  \mathbb{E}[|A_{n}^i-A^i|^2]&=\lim_{n\to \infty}\mathbb{E}\left[\left(\int_{0}^{T} Y_{t,n}^{i}dW_{t}^{i}\right)^2\right]\\
&=\lim_{n\to \infty}\mathbb{E}\left[\int_{0}^{T} \left(Y_{t,n}^{i}\right)^2dt\right]\\
&=0,
\end{align*}
 where the second equality follows from It\^o isometry and the last equality follows from the dominated convergence theorem since $Y_{t,n}^{i}$ is continuous and also bounded (following from continuity and boundedness of $\sigma_t$ and $b_t$ using again the generalized dominated convergence theorem). Similarly, we can show that $\mathbb{E}[|B_{n}^i-B^i|^2]$ goes to $0$ as $n\to \infty$, and hence, the right hand side of \eqref{eq:AB-term} goes to $0$ as $n\to \infty$.

On the other hand, we have
\begin{align*}
    &\mathbb{E} \left[\max\left({\sf exp}(2\sum_{i=1}^{N}(A_n^i-B_n^i)),{\sf exp}(2\sum_{i=1}^{N}(A^i-B^i))\right)\right]\\&\leq \mathbb{E} \left[{\sf exp}(2\sum_{i=1}^{N}(A_n^i-B_n^i))+{\sf exp}(2\sum_{i=1}^{N}(A^i-B^i))\right]\\
    &\leq \mathbb{E} \left[{\sf exp}(2\sum_{i=1}^{N}(A_n^i-B_n^i))\right]+\mathbb{E} \left[{\sf exp}(2\sum_{i=1}^{N}(A^i-B^i))\right].
\end{align*}

Since $\sigma_t$ and $b_t$ are uniformly bounded, following proof of \cite[Lemma 2.5]{pradhan2023controlled}, we have  
 \begin{align}
     \mathbb{E} \left[{\sf exp}(2\sum_{i=1}^{N}(A_n^i-B_n^i))\right]&=\mathbb{E} \left[{\sf exp}(2\sum_{i=1}^{N}(A_n^i-2B_n^i)){\sf exp}(2\sum_{i=1}^{N}B_n^i))\right]\\
     &\leq {\sf exp}(2NK).
 \end{align}
 for some $K\in \mathbb{R}_{+}$. This together with \eqref{eq:Z-term} completes the proof.
 
\section{Proof of Theorem \ref{the:5-coup}}\label{app:the8}
 We first show that the mapping $J_{N}^{\pi}:\LEXN \to \mathbb{R}$ is continuous. Suppose that $\{P_{\pi, n}\}_{n}\subseteq\LEXN$ (or $\LEX$) converges to $P_{\pi, \infty}$ as $n\to \infty$. By Lemma \ref{lem:2-coup}, $\{P_{0,n}^i\}_n$, sequence of the distribution of $\pmb{X^i_{n}}$ under the measure transformation, converges weakly to $P_{0}^i$, the distribution of $\pmb{X^i}$ for all $i\in \mathcal{N}$. By Skorohod’s representation theorem, there exists a probability space in which $\{\pmb{\gamma^{i}_n}(\omega)\}_{n}$ and $\{\pmb{X^i_{n}}(\omega)\}$ converge weakly to $\pmb{\gamma^{i}}(\omega)$ and $\pmb{X^i}(\omega)$ almost surely (in $\omega$), respectively as $n\to \infty$. We have
 \begin{align}
    & \lim_{n\to \infty} \bigg|\mathbb{E} \left[\prod_{i=1}^{N}Z_{N}^{i,n} \left(\int_{0}^{T}\int_{\mathbb{U}}\frac{1}{N}\sum_{i=1}^{N}\hat{c}\left(X_{t,n}^{i},U_{t}^{i},\frac{1}{N}\sum_{p=1}^{N} \delta_{X_{t,n}^{p}},\frac{1}{N}\sum_{p=1}^{N} \delta_{U_{t}^{p}} \right) \prod_{i=1}^{N}\pmb{\gamma^i_n}(d{U^i},dt)\right)\right]\nonumber\\
     &-\mathbb{E} \left[\prod_{i=1}^{N}Z_{N}^{i,\infty} \left(\int_{0}^{T}\int_{\mathbb{U}}\frac{1}{N}\sum_{i=1}^{N}\hat{c}\left(X_{t}^{i},U_{t}^{i},\frac{1}{N}\sum_{p=1}^{N} \delta_{X_{t}^{p}},\frac{1}{N}\sum_{p=1}^{N} \delta_{U_{t}^{p}} \right) \prod_{i=1}^{N}\pmb{\gamma^i}(d{U^i},dt)\right)\right]\bigg|\nonumber\\
     &\leq \lim_{n\to \infty} \bigg|\mathbb{E} \left[\prod_{i=1}^{N}Z_{N}^{i,\infty} \left(\int_{0}^{T}\int_{\mathbb{U}}\frac{1}{N}\sum_{i=1}^{N}\hat{c}\left(X_{t,n}^{i},U_{t}^{i},\frac{1}{N}\sum_{p=1}^{N} \delta_{X_{t,n}^{p}},\frac{1}{N}\sum_{p=1}^{N} \delta_{U_{t}^{p}} \right) \prod_{i=1}^{N}\pmb{\gamma^i_n}(d{U^i},dt)\right)\right]\nonumber\\
     &-\mathbb{E} \left[\prod_{i=1}^{N}Z_{N}^{i,\infty} \left(\int_{0}^{T}\int_{\mathbb{U}}\frac{1}{N}\sum_{i=1}^{N}\hat{c}\left(X_{t}^{i},U_{t}^{i},\frac{1}{N}\sum_{p=1}^{N} \delta_{X_{t}^{p}},\frac{1}{N}\sum_{p=1}^{N} \delta_{U_{t}^{p}} \right) \prod_{i=1}^{N}\pmb{\gamma^i}(d{U^i},dt)\right)\right]\bigg|\label{eq:FE}\\
     &+ M \lim_{n\to \infty} \mathbb{E} \left[\left|\prod_{i=1}^{N}Z_{N}^{i,n}-\prod_{i=1}^{N}Z_{N}^{i,\infty}\right|\right]\label{eq:FE2},
 \end{align}
where $M$ is the uniform bound for the cost function $\hat{c}$, $Z^{i,n}_{N}:={\sf exp}(A_n^i-B_n^i)$ and $Z^{i,\infty}_{N}:={\sf exp}(A^i-B^i)$ with $A_{n}^{i},B_{n}^{i},A^i$ and $B^i$ as \eqref{eq:A-B1}--\eqref{eq:A-B4}. The generalized convergence theorem implies that the expression in \eqref{eq:FE} is zero and Lemma \ref{lem:contZ} implies that the expression in \eqref{eq:FE2} is zero. 

Hence, the mapping $J_{N}^{\pi}:\LEXN \to \mathbb{R}$ is continuous. This implies that there exists an exchangeable randomized globally optimal policy $P_{\pi}^{N}$ for \PN\ among all randomized policies $\LEXN$ since $\LEXN$ is compact. The rest of the proof proceeds by following steps (ii)-(iii) of the proof of Theorem \ref{the:5}.

\section{Proof of Lemma \ref{lem:3-coup}}\label{app:lem6}

Similar to the proof of Lemma \ref{lem:3}, we have for every $t\in \mathbb{T}$, $\nu^{n\star}_t$, the joint distribution on $\pmb{X^{i\star}_{n}},\pmb{U^{i\star}_{n}},{\sf E}_{n}^{\pmb{X^\star}}, {\sf E}_{n}^{\pmb{U^\star}}$, converges in distribution (almost surely in $\omega$) converges to $\nu^{\star}_t$, the joint distribution on $\pmb{X^{i\star}_{\infty}},\pmb{U^{i\star}_{\infty}},{\sf E}_{\infty}^{\pmb{X^\star}}, {\sf E}_{\infty}^{\pmb{U^\star}}$. Hence, we can show that
\begin{align}
&\limsup_{N\to \infty}\inf_{P_{\pi}^{N}\in \LEXN}\int \prod_{i=1}^{N}Z^{i}_{N} \:\hat{c}_{N}(\pmb{X^{1:N}},\pmb{\gamma^{1:N}})\prod_{i=1}^{N}P^i_{0,N}(d\pmb{X^{i}};\pmb{\gamma^{i}}) P_{\pi}^{N}(d\pmb{\gamma^{1:N}})\nonumber\\
&= \lim_{n\to \infty} \int \prod_{i=1}^{n}Z_{n}^{i}\int_{0}^{T} \int \frac{1}{n}\sum_{i=1}^{n}\hat{c}(X_{t}^{i},U_{t}^{i},{\sf E}_{n,t}^{\pmb{X^\star}},{\sf E}_{n,t}^{\pmb{U^\star}} )\prod_{i=1}^{n}\pmb{\gamma^{i}_n}(d{U^{i}},dt)P^i_{0,n}(d\pmb{X^{i}};\pmb{\gamma^{i}}) \nonumber\\
&= \lim_{n\to \infty} \int \left(\prod_{i=1}^{n} Z^{i}_{n} - 1\right)\int_{0}^{T} \int \frac{1}{n}\sum_{i=1}^{n}\hat{c}(X_{t}^{i},U_{t}^{i},{\sf E}_{n,t}^{\pmb{X^\star}},{\sf E}_{n,t}^{\pmb{U^\star}} )\prod_{i=1}^{n}\pmb{\gamma^{i}_n}(d{U^{i}},dt)P^i_{0,n}(d\pmb{X^{i}};\pmb{\gamma^{i}})\label{pf-s2-lem3-0-dec}\\
&+\lim_{n\to \infty} \int \int_{0}^{T} \hat{c}(X_{n,t}^{1\star},U_{t,n}^{1\star},{\sf E}_{n,t}^{\pmb{X^\star}},{\sf E}_{n,t}^{\pmb{U^\star}} )\nu^{n\star}_t\left(d\pmb{X^{1}},d\pmb{U^{1}},d{\sf E}_{n}^{\pmb{X^\star}},d{\sf E}_{n}^{\pmb{U^\star}}\right)dt\label{pf-s2-lem3-1-dec}\\
&=\int \int \int_{0}^{T} \hat{c}(X_{t}^{1},U_{t}^{1},{\sf E}_{\infty,t}^{\pmb{X^\star}},{\sf E}_{\infty,t}^{\pmb{U^\star}} )\nu^{\star}_t\left(d\pmb{X^{1}},d\pmb{U^{1}},d{\sf E}_{\infty}^{\pmb{X^\star}},d{\sf E}_{\infty}^{\pmb{U^\star}}\right)dt\label{pf-s2-lem3-2-dec}\\
&\geq \limsup_{N\to \infty}\inf_{P_{\pi}\in \LEX}\int \left(\hat{c}_{N}(\pmb{X}^{1:N},\pmb{\gamma^{1:N}})\prod_{i=1}^{N}\mu^i(d\pmb{X^{i}};\pmb{\gamma^{i}})\right) P_{\pi,N}(d\pmb{\gamma^{1:N}})\label{pf-s2-lem3-3-dec}.
\end{align}
Expression in \eqref{pf-s2-lem3-0-dec} is equal to $0$ as
\begin{align*}
&\lim_{n\to \infty} \int \left(\prod_{i=1}^{n} Z^{i}_{n} - 1\right)\int_{0}^{T} \int \frac{1}{n}\sum_{i=1}^{n}\hat{c}(X_{t}^{i},U_{t}^{i},{\sf E}_{n,t}^{\pmb{X^\star}},{\sf E}_{n,t}^{\pmb{U^\star}} )\prod_{i=1}^{n}\pmb{\gamma^{i}_n}(d{U^{i}},dt)P^i_{0,n}(d\pmb{X^{i}};\pmb{\gamma^{i}})\\
&\leq M \lim_{n\to \infty} \int \left|\prod_{i=1}^{n} Z^{i}_{n} - 1\right|P^i_{0,n}(d\pmb{X^{i}};\pmb{\gamma^{i}})\\
&=0,
\end{align*}
where the first inequality follows the fact that the cost is uniformly bounded and the last equality follows from Assumption \ref{eq:continuity-RD}. Equality \eqref{pf-s2-lem3-1-dec} follows from the fact that the induced $\pmb{X^{1\star:n\star}}$, $\pmb{U^{1\star:n\star}}$, and $\pmb{\gamma^{1\star:n\star}}$ are exchangeable. Equality \eqref{pf-s2-lem3-2-dec} follows from step 1 using the generalized dominated convergence theorem, and \eqref{pf-s2-lem3-3-dec} follows from the fact that the limiting policy is infinitely exchangeable. This completes the proof. 

\section{Proof of Theorem \ref{the:6-coup}}\label{app:the9}
 Along the same reasoning as that used in the chain of inequalities in \eqref{eq:q4.1}--\eqref{eq:4.6}, we only need to show that 
\begin{align}
& \lim_{M\to \infty}\limsup\limits_{N \to \infty}\inf\limits_{P_{\pi}^{N} \in \LPRS^{N}}\int \prod_{i=1}^{N}Z^{i}_{N} \:\min\{M,\hat{c}_{N}(\pmb{X^{1:N}},\pmb{\gamma^{1:N}})\}\prod_{i=1}^{N}P^i_{0,N}(d\pmb{X^{i}};\pmb{\gamma^{i}}) P_{\pi}^{N}(d\pmb{\gamma^{1:N}})\label{eq:4.3-dec}\\
&\geq \inf\limits_{P_{\pi} \in \LPRS}\limsup\limits_{N \to \infty}  \int \left(\hat{c}_{N}(\pmb{X}^{1:N},\pmb{\gamma^{1:N}})\prod_{i=1}^{N}P^i_{0}(d\pmb{X^{i}};\pmb{\gamma^{i}})\right) P_{\pi,N}(d\pmb{\gamma^{1:N}})\label{eq:4.4-dec}.
\end{align}

 Denote the index of a converging sequence of symmetric optimal policies $\pmb{\gamma_n^{\star}}$ of \PN\ by $n$. We have
\begin{align}
   &\lim_{M\to \infty}\lim_{n\to \infty} \mathbb{E} \bigg[\prod_{i=1}^{n}Z_{n}^{i,n}\bigg(\int_{0}^{T}\int_{\mathbb{U}}\frac{1}{n}\sum_{i=1}^{N}\min\left\{M, \hat{c}\left(X_{t,n}^{i},U_{t}^{i},\frac{1}{n}\sum_{p=1}^{n} \delta_{X_{t,n}^{p}},\frac{1}{N}\sum_{p=1}^{n} \delta_{U_{t}^{p}} \right)\right\}\nonumber\\
   &\qquad \qquad \qquad\times \prod_{i=1}^{n}\pmb{\gamma_n^{\star}}(d{U^i},dt)\bigg)\bigg]\nonumber\\
   &=\lim_{M\to \infty}\lim_{n\to \infty} \mathbb{E} \bigg[ \int_{0}^{T}\int_{\mathbb{U}}\frac{1}{n}\sum_{i=1}^{N}\min\left\{M,\hat{c}\left(X_{t,n}^{i},U_{t}^{i},\frac{1}{n}\sum_{p=1}^{n} \delta_{X_{t,n}^{p}},\frac{1}{N}\sum_{p=1}^{n} \delta_{U_{t}^{p}} \right)\right\} \nonumber\\
   &\qquad \qquad \qquad\times \prod_{i=1}^{n}\pmb{\gamma_n^{\star}}(d{U^i},dt)\bigg]\nonumber\\
   &+\lim_{M\to \infty}\lim_{n\to \infty} \mathbb{E} \bigg[(\prod_{i=1}^{n}Z_{n}^{i,n}-1)\int_{0}^{T}\int_{\mathbb{U}}\frac{1}{n}\sum_{i=1}^{N}\min\left\{M,\hat{c}\left(X_{t,n}^{i},U_{t}^{i},\frac{1}{n}\sum_{p=1}^{n} \delta_{X_{t,n}^{p}},\frac{1}{N}\sum_{p=1}^{n} \delta_{U_{t}^{p}} \right)\right\}\nonumber\\
   &\qquad \qquad \qquad\times \prod_{i=1}^{n}\pmb{\gamma_n^{\star}}(d{U^i},dt)\bigg]\nonumber\\
   &=\lim_{M\to \infty}\lim_{n\to \infty} \mathbb{E} \bigg[ \int_{0}^{T}\int_{\mathbb{U}}\frac{1}{n}\sum_{i=1}^{N}\min\left\{M,\hat{c}\left(X_{t,n}^{i},U_{t}^{i},\frac{1}{n}\sum_{p=1}^{n} \delta_{X_{t,n}^{p}},\frac{1}{N}\sum_{p=1}^{n} \delta_{U_{t}^{p}} \right)\right\} \nonumber\\
   &\qquad \qquad \qquad\times \prod_{i=1}^{n}\pmb{\gamma_n^{\star}}(d{U^i},dt)\bigg]\label{eq:32-2}\\
   &=J_{\infty}(\underline{\pmb{P_{\pi}^{\star}}})\label{eq:32-1},
\end{align}
where $Z^{i,n}_{n}:={\sf exp}(A_n^i-B_n^i)$ with
\begin{align*}
    A_{n}^{i}&=\int_{0}^{T} \sigma^{-1}_t(X_{t,n}^{i})\int\overline{b_{t}}\left(X_{t,n}^i,U_{t}^i, \frac{1}{n}\sum_{p=1}^{n} \delta_{X^{p}_{t,n}},\frac{1}{n}\sum_{p=1}^{n} \delta_{U^{p}_t}\right)\prod_{i=1}^{n}\pmb{\gamma_n^{\star}}(d{U^i}|t)dW_{t}^{i}\\
    B_{n}^{i}&=\frac{1}{2}\int_{0}^{T} \left|\sigma^{-1}_t(X_{t,n}^{i})\int_{\mathbb{U}}\overline{b_{t}}\left(X_{t,n}^i,U_{t}^i, \frac{1}{n}\sum_{p=1}^{n} \delta_{X^{p}_{t,n}},\frac{1}{n}\sum_{p=1}^{n} \delta_{U^{p}_t}\right)\prod_{i=1}^{n}\pmb{\gamma^{\star}_n}(d{U^i}|t)\right|^2dt.
    \end{align*}

The equality \eqref{eq:32-2} follows from Assumption \ref{eq:continuity-RD} since
\begin{align*}
    \lim_{n\to \infty} \mathbb{E} \left[\left|\prod_{i=1}^{n}Z_{n}^{i,n}-1\right|\right]=0
\end{align*}
for symmetrically independent policy of $\pmb{\gamma^{\star}_{n}}(\omega)$ as $n\to \infty$. Equality \eqref{eq:32-1} follows from an argument used in the proof of Theorem \ref{the:6} using the generalized dominated convergence and the monotone convergence theorems. This completes the proof. 

\end{document}